\def\ol{\overline}
\def\prt{\partial}
\def\wt{\widetilde}
\def\inter{\mathrm{Int}}
\def\T{{\mathcal T}}
\def\x{{\bf x}}
\def\bx{\mathbf{x}}
\def\eps{\varepsilon}
\newcommand{\X}{\mathbf X}
\newcommand{\Y}{\mathbf Y}
\newcommand{\M}{\mathcal M}
\newcommand{\XX}{\mathcal X}
\theoremstyle{plain}
\newtheorem{theorem}{Theorem}[section]
\newtheorem{lemma}[theorem]{Lemma}
\newtheorem{corollary}[theorem]{Corollary}
\theoremstyle{definition}
\newtheorem{example}[theorem]{Example}
\newtheorem{definition}[theorem]{Definition}
\theoremstyle{remark}
\newtheorem{remark}[theorem]{Remark}
\numberwithin{equation}{section}
\newcommand{\R}{\mathbb R}
\newcommand{\N}{\mathbb N}
\newcommand{\I}{{\mathbf 1}}
\newcommand{\e}{\mathrm e}
\newcommand{\set}[1]{\left\{#1\right\}}
\renewcommand{\P}[1][]{\mathbb P^{#1}}
\newcommand{\E}[1][]{\mathbb E^{#1}}
\newcommand{\F}{\mathcal F}
\DeclareMathOperator{\dist}{dist}
\DeclareMathOperator{\vol}{vol}
\DeclareMathOperator{\diam}{diam}
\DeclareMathOperator{\card}{card}
 \DeclareMathOperator{\bes}{Bes}
\newcounter{stala}
\newcommand{\stala}[1][]{\refstepcounter{stala}c_{\thestala}}
\newcommand{\Z}[1]{Z^{#1}}
\newcommand{\cadlag}{{\scshape c\`adl\`ag} }
\newcommand{\calig}{\mathcal}
\newcommand{\interior}[1]{\overset\circ{#1}}
\newcommand{\br}[1]{\left( #1\right)}
\newcommand{\bs}[1]{\left[ #1\right]}
\newcommand{\bc}[1]{\left\{ #1\right\}}
\newcommand{\ba}[1]{\left< #1\right>}
\newcommand{\bm}[1]{\left| #1\right|}
\newcommand{\bmm}[1]{\left\| #1\right\|}
\newcommand{\halfint}[1]{\left[ #1 \right)}
\newcommand{\barrier}[2]{#1\,\left|\, \vphantom{#1}#2\right.}
\newcommand{\setof}[2]{\bc{{#1}\,:\,{#2}}}
\newcommand{\restrict}[2]{\left.#1\vphantom{{#1}_{#2}}\right|_{#2}}
\begin{document}

\title{\bf Non-extinction of a Fleming-Viot particle model}
\author{
{\bf Mariusz Bieniek},  {\bf Krzysztof Burdzy}  \ and \ {\bf Sam
Finch} }

\address{Instytut Matematyki, Uniwersytet Marii Sk\l odowskiej-Curie, 20-031
Lublin, Poland}
\address{Department of Mathematics, Box 354350,
University of Washington, Seattle, WA 98195, USA}
\address{
Mathematics Institute, University of Warwick, Coventry CV4 7AL,
United Kingdom}

\email{mariusz.bieniek@poczta.umcs.lublin.pl}
\email{burdzy@math.washington.edu}
\email{S.T.J.Finch@warwick.ac.uk}

\thanks{Research supported in part by NSF Grant DMS-0600206. }

\begin{abstract}
We consider a branching particle model in which particles move
inside a Euclidean domain according to the following rules. The
particles move as independent Brownian motions until one of
them hits the boundary. This particle is killed but another
randomly chosen particle branches into two particles, to keep
the population size constant. We prove that the particle
population does not approach the boundary simultaneously in a
finite time in some Lipschitz domains. This is used to prove a
limit theorem for the empirical distribution of the particle
family.
\end{abstract}

\keywords{Brownian motion, branching particle system}
\subjclass{60J65,60J80}

\maketitle


\section{Introduction}\label{section:article_intro}

The paper is concerned with a branching particle system $\X_t =
(X^1_t, \dots, X^N_t)$ in which individual particles $X^j$ move
as $N$ independent Brownian motions and die when they hit the
complement of a fixed domain $D\subset \R^d$. To keep the
population size constant, whenever any particle $X^j$ dies,
another one is chosen uniformly from all particles inside $D$,
and the chosen particle branches into two particles.
Alternatively, the death/branching event can be viewed as a
jump of the $j$-th particle. See Section \ref{FVproc} for a
more detailed description of the construction.

Let $\tau_k$ be the time of the $k$-th jump of $\X_t$. Since the
distribution of the hitting time of $\prt D$ by Brownian motion
has a continuous density, only one particle can hit $\prt D$ at
time $\tau_k$, for every $k$, a.s. The construction of the process
is elementary for all $t< \tau_\infty = \lim_{k\to \infty}
\tau_k$. However, there is no obvious way to continue the process
$\X_t$ after the time $\tau_\infty$ if $\tau_\infty < \infty$.
Hence, the question of the finiteness of $\tau_\infty$ is
interesting. Theorem 1.1 in \cite{burdzymarch00} asserts that
$\tau_\infty = \infty$, a.s., for every domain $D$. Unfortunately,
the proof of that theorem contains an irreparable error (see
Example \ref{ex:sep2} below). The cited theorem might be true but
it appears to be much harder to prove that the original incorrect
argument might have suggested. Example \ref{ex:sep2} given below
shows that result cannot be generalized to arbitrary Markov
processes. L\"obus (\cite{loebus}) recently proved that
$\tau_\infty = \infty$, a.s., in Euclidean domains that satisfy
the internal ball condition. Another argument showing that
$\tau_\infty = \infty$, a.s., in domains satisfying the internal
ball condition is implicit in the proof of Theorem 1.4 of
\cite{burdzymarch00}.

In this article, we will prove that $\tau_\infty = \infty$,
a.s., if the domain $D \subset \R^d$ is Lipschitz with a
Lipschitz constant $c$ depending on $d$ and the number $N$ of
particles---see Theorem \ref{thm:exist} and Remark \ref{rem:1}
below. In addition, we prove theorems on existence and the form
of the stationary distribution of the process $\X_t$,
generalizing those in \cite{burdzymarch00}---see Section
\ref{sec:station}.

We use this attempt to rectify an error in an earlier paper to
introduce two new techniques. In the end, these techniques may
have greater interest or significance than the main theorems.
The first technique, developed in Section \ref{sec:Zproc}, is
the construction of a process of Brownian excursions in a cone,
with all excursions starting at the vertex. Such a process
exists only in cones with certain angles. The construction is
combined with a coupling argument to provide a ``lower bound''
for $\X_t$, in an appropriate sense. The process constructed
from Brownian excursions is simpler to analyze than $\X_t$.

The second technique is a new type of boundary Harnack principle (see
Section \ref{sec:bhp}). The standard boundary Harnack principle
compares two functions satisfying a PDE with the same operator, for
example, Laplacian, and different boundary conditions. Our new
version of the boundary Harnack principle compares a harmonic
function with a function $u$ satisfying $\Delta u = -1$. The reason
for proving the new form of the boundary Harnack principle is that it
allows one to compare certain probabilities and expectations, and
then use a method of proof that goes back at least to Davis
\cite{davis83}. The ``new boundary Harnack principle'' has been
proved independently by Atar, Athreya and Chen (\cite{aac}), together
with a number of other interesting theorems. We include a full proof
of the new boundary Harnack principle because it is different from
that in \cite{aac}, and ours is amenable to generalizations that will
be the subject of a forthcoming article.

Both techniques mentioned above---the Brownian excursion process and
the boundary Harnack principle---are limited to Lipschitz domains
and, moreover, the Lipschitz constant has to satisfy a certain
inequality. A natural question arises whether such special Lipschitz
domains are the largest natural family of sets where our results
hold. It turns out that they are not. In the last section of the
paper we will show that, for the two particle process, $\tau_\infty =
\infty$, a.s., in all polyhedral domains, with arbitrary angles
between the faces of the boundary. Unfortunately, our method cannot
be easily adapted to the multiparticle case, so we leave this
generalization as an open problem.

For some related results on Fleming-Viot type models in smooth
domains, see \cite{GK} and references therein. The discrete version
of the model is studied in \cite{AFG}; see also references in that
paper.

We are grateful to Zhenqing Chen, Davar Khoshnevisan and Yuval
Peres for very helpful advice.

\section{Preliminaries}

For $y=(y^1,\dotsc,y^d)\in\R^d$, let $|y|$ denote the Euclidean norm
of $y$ and let $\wt y=(y^1,\dotsc,y^{d-1})$. We will denote the open
ball with center $x$ and radius $r$ by $B(x,r)$. The closure of a set
$A$ will be denoted $\ol A$ and its interior will be denoted $\inter
A$. All constants, typically denoted by $c$ with or without
subscript, are assumed to be strictly positive and finite.

A function $F:\R^{d-1}\to\R$ is called Lipschitz if there
exists a constant $L$ such that
\begin{equation*}
  |F(x)-F(y)|\leq L|x-y|,\quad x,y\in\R^{d-1}.
\end{equation*}
Any constant $L$ satisfying the above condition will be called
a Lipschitz constant of $F$.

Consider a bounded connected open set $D\subset\R^d$, $d\geq
2$. We will call $D$ a Lipschitz domain with Lipschitz constant
$L$ if $\prt D$ can be covered by a finite number of open balls
$B_1,\dotsc,B_n$ such that for every $i=1, \dots, n$, there
exists a Lipschitz function $F_i:\R^{d-1}\to\R$ with Lipschitz
constant $L$, and an orthonormal coordinate system $CS_i$ such
that
\begin{equation*}
  D\cap B_i=\set{(y^1,\dotsc,y^d)\text{ in }CS_i:y^d>F_i(\wt y)}\cap
  B_i.
\end{equation*}

The following Harnack principles can be found in
\cite{bassPTA}.

\begin{theorem}[Harnack inequality]
  \begin{enumerate}[(a)]
    \item Suppose $0<r<R$. There exists $c=c(r,R,d)$ such
        that if $u$ is nonnegative and harmonic in
        $B(0,R)\subset \R^d$ and $x,y\in B(0,r)$, then
      \begin{equation*}
    u(x)\leq c\,u(y).
      \end{equation*}
    \item Suppose that $D\subset\R^d$ is a domain and
        $x,y\in D$ can be connected by a curve
        $\gamma\subset D$ such that
        $\inf_{z\in\gamma}\dist(z,\partial D)\geq R$. There
        exists $c=c(\gamma,R,d)$ such that if $u$ is
        nonnegative and harmonic in $D$, then
      \begin{equation*}
    u(x)\leq c\,u(y).
      \end{equation*}
  \end{enumerate}
  \label{thm:Hi}
\end{theorem}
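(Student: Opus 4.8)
The plan is to obtain part (a) from the Poisson kernel representation of nonnegative harmonic functions on a ball, and then deduce part (b) from part (a) by a Harnack chain argument.

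For part (a), fix an intermediate radius, for definiteness $R'=(r+R)/2$, so that $r<R'<R$. A function $u$ that is nonnegative and harmonic on $B(0,R)$ is smooth on a neighborhood of the compact set $\ol{B(0,R')}$, hence equals the Poisson integral of its boundary values there:
\begin{equation*}
  u(x)=\int_{\prt B(0,R')}\frac{R'^{2}-|x|^{2}}{R'\,\omega_d\,|x-\xi|^{d}}\,u(\xi)\,\sigma(d\xi),\qquad x\in B(0,R'),
\end{equation*}
where $\omega_d$ is the surface area of the unit sphere in $\R^d$ and $\sigma$ is surface measure on $\prt B(0,R')$. For $x\in B(0,r)$ and $\xi\in\prt B(0,R')$ one has $R'-r\le|x-\xi|\le R'+r$ and $R'^{2}-r^{2}\le R'^{2}-|x|^{2}\le R'^{2}$, so the Poisson kernel is bounded between two positive constants that depend only on $r$, $R$ and $d$. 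Consequently, for any $x,y\in B(0,r)$ the integrand at $x$ is at most a constant multiple of the integrand at $y$, uniformly in $\xi$; integrating against the nonnegative measure $u(\xi)\,\sigma(d\xi)$ gives $u(x)\le c\,u(y)$ with $c=c(r,R,d)$.

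For part (b), first observe that since $\gamma$ is compact and $\dist(z,\prt D)\ge R$ for every $z\in\gamma$, the open ball $B(z,R)$ is contained in $D$ for each $z\in\gamma$, because a segment from the interior point $z$ to a point in $\R^d\setminus D$ must meet $\prt D$. Cover $\gamma$ by finitely many balls $B(z_1,R/3),\dots,B(z_m,R/3)$ with centers $z_i\in\gamma$; using connectedness of $\gamma$ the indices can be ordered so that consecutive balls intersect, and one may also arrange that $x\in B(z_1,R/3)$ and $y\in B(z_m,R/3)$, with $m$ depending only on $\gamma$ and $R$. On each $B(z_i,R)$ the function $u$ is nonnegative and harmonic, so part (a), applied on $B(z_i,R)$ with the smaller radius $2R/3$, yields $u(p)\le c_1\,u(q)$ for all $p,q\in B(z_i,2R/3)$, with $c_1=c_1(R,d)$. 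Picking a point $p_i\in B(z_i,R/3)\cap B(z_{i+1},R/3)$ for each $i$ and chaining these inequalities along $x,p_1,p_2,\dots,p_{m-1},y$ gives $u(x)\le c_1^{\,m}\,u(y)$, which is the claim with $c=c(\gamma,R,d)$.

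The only points requiring care are bookkeeping: in part (a), checking that after the choice of $R'$ the constant still depends only on $r$, $R$ and $d$, which is clear since $R'$ is an explicit function of $r$ and $R$; and in part (b), producing a finite overlapping chain of balls compactly contained in $D$ whose length is controlled by $\gamma$ and $R$. There is no genuine analytic difficulty here --- all the substance is in the Poisson kernel estimate of part (a), while part (b) is a soft covering argument. If one prefers a probabilistic argument, part (a) also follows by comparing the exit distributions from $B(0,R')$ of Brownian motion started at $x$ and at $y$, via $u(x)=\mathbb E^{x}[u(B_{\tau})]$ with $\tau$ the exit time of $B(0,R')$; but the Poisson kernel computation is the most economical route.
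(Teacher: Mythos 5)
Your proof is correct and is the standard textbook argument. The paper itself gives no proof of this theorem but simply cites \cite{bassPTA}, where the argument is essentially the one you give: part (a) by comparing Poisson kernels on a slightly smaller concentric ball (equivalently, comparing Brownian exit distributions), and part (b) by a finite Harnack chain of overlapping balls along $\gamma$, each compactly contained in $D$. The one point worth stating a touch more carefully in part (b) is that the chain of centers $z_1,\dots,z_m$ with consecutive balls $B(z_i,R/3)$ overlapping is obtained by choosing a parametrization $\gamma:[0,1]\to D$ and a partition $0=t_0<t_1<\dots<t_m=1$ fine enough that $|\gamma(t_i)-\gamma(t_{i-1})|<R/3$, which uses uniform continuity of $\gamma$ on $[0,1]$ rather than connectedness per se; this fixes the number $m$ as a function of $\gamma$ and $R$ as required.
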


\begin{theorem}[Boundary Harnack principle]
Suppose $D$ is a connected Lipschitz domain.  Suppose $V$ is
open, $M$ is compact and $M\subset V$. Then there exists a
constant $c=c(M,V,D)$ such that if $u$ and $v$ are two positive
and harmonic functions on $D$ that both vanish continuously on
$V\cap \partial D$, then
  \begin{equation*}
    \frac{u(x)}{v(x)}\leq c\,\frac{u(y)}{v(y)},\quad x,y\in M\cap D.
  \end{equation*}
  \label{thm:bHp}
\end{theorem}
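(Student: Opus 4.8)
The boundary Harnack principle is classical, and a complete proof may be found in \cite{bassPTA}; here I only sketch the probabilistic approach. The plan is to reduce the global two-sided comparison to a one-scale comparison inside a single Lipschitz ``box'', using the interior Harnack inequality (Theorem~\ref{thm:Hi}) and the compactness of $M$, and then to prove the one-scale statement by running Brownian motion to the exit of the box.

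First I would localize. Since $M$ is compact and $M\subset V$, we have $\delta_0:=\dist(M,\R^d\setminus V)>0$; fix a point $x_0\in D$. If $x\in M\cap D$ and $\dist(x,\prt D)\geq\delta_0/2$, then $x$ and $x_0$ are joined by a curve in $D$ staying at distance at least $c(\delta_0,D)$ from $\prt D$, so Theorem~\ref{thm:Hi}(b) confines both $u(x)/u(x_0)$ and $v(x)/v(x_0)$ to a fixed compact subinterval of $(0,\infty)$; this gives the asserted inequality for such $x$, and likewise for $y$. It therefore suffices to produce $r_0\in(0,\delta_0/16)$, small enough that near every $Q\in\prt D\cap V$, in suitable orthonormal coordinates, $D\cap B(Q,8r_0)=\{y^d>F(\wt y)\}\cap B(Q,8r_0)$ with $F$ Lipschitz of constant $L$, together with a constant $c_1$, such that for every such $Q$ and every pair $u,v$ of positive harmonic functions on $D$ vanishing continuously on $\prt D\cap B(Q,8r_0)$,
\begin{equation}\label{eq:bhp-local}
  \frac{u(x)}{v(x)}\ \leq\ c_1\,\frac{u(A_Q)}{v(A_Q)},\qquad x\in D\cap B(Q,r_0),
\end{equation}
where $A_Q$ is a fixed ``corkscrew'' point for $Q$, that is $|A_Q-Q|\leq 2r_0$ and $\dist(A_Q,\prt D)\geq\kappa r_0$ with $\kappa=\kappa(d,L)$. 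Covering the compact set $M\cap\prt D$ by finitely many such balls, joining the corresponding points $A_Q$ to $x_0$ through the interior of $D$ (Theorem~\ref{thm:Hi} once more), and combining with the reduction above then yields the theorem.

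Fix $Q$, write $A=A_Q$, let $U=D\cap B(Q,2r_0)$, and let $\Gamma\subset D\cap\prt B(Q,2r_0)$ be a fixed relatively open ``window'' at distance at least $\kappa r_0$ from $\prt D$. The crux is the one-scale estimate: if $w\geq0$ is harmonic on $D$ and vanishes continuously on $\prt D\cap B(Q,8r_0)$, then for $x\in D\cap B(Q,r_0)$
\begin{equation}\label{eq:bhp-sandwich}
  c_2\,w(A)\,\omega(x)\ \leq\ w(x)\ \leq\ c_3\,w(A)\,\omega(x),\qquad \omega(x):=\P[x]\!\left(B_{\tau_U}\in\Gamma\right).
\end{equation}
Granting this, \eqref{eq:bhp-local} follows at once: apply \eqref{eq:bhp-sandwich} to $w=u$ and to $w=v$ and divide, and the common factor $\omega(x)$ cancels. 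The lower bound in \eqref{eq:bhp-sandwich} is easy---$w(x)=\E[x][w(B_{\tau_U})]\geq\E[x]\!\left[w(B_{\tau_U})\,;\,B_{\tau_U}\in\Gamma\right]$, and on $\Gamma$ one replaces $w$ by $w(A)$ up to a multiplicative constant using Theorem~\ref{thm:Hi}. The upper bound rests on two ingredients: (a) the Carleson estimate $\sup_{D\cap\overline{B(Q,2r_0)}}w\leq c\,w(A)$; and (b) the harmonic-measure comparison $\P[x]\!\left(B_{\tau_U}\in D\cap\prt B(Q,2r_0)\right)\leq c\,\omega(x)$ for $x\in D\cap B(Q,r_0)$, which says that a Brownian path leaving $U$ through the spherical part of $\prt U$ rather than through $\prt D$ does so through the far window $\Gamma$ with comparable probability. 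Granting (a) and (b), the maximum principle applied on $U$ to $w$ and to the harmonic function $\phi(x):=\P[x]\!\left(B_{\tau_U}\in D\cap\prt B(Q,2r_0)\right)$---on $\prt U$ one has $w\leq(\sup_{\prt U}w)\,\phi$, since $\phi=1$ on $D\cap\prt B(Q,2r_0)$ and $w=\phi=0$ on $\prt D\cap\overline{B(Q,2r_0)}$---gives $w\leq c\,w(A)\,\phi$ on $U$, and then (b) yields the upper bound in \eqref{eq:bhp-sandwich}. Both (a) and (b) are proved by an iteration over a Whitney-type chain of Lipschitz boxes shrinking toward $\prt D$; this iteration, which is where the \emph{uniform} Lipschitz geometry of $\prt D$---interior corkscrew points and comparable boxes at every scale, using only the boundedness of $L$ and not its smallness---is essential, is the step I expect to be the main obstacle.
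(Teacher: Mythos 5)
This theorem is not proved in the paper at all: it is stated as a known result and attributed to \cite{bassPTA}, so there is no internal argument to compare against. Your sketch is, in structure, exactly the standard probabilistic proof found in that reference (and in Bass--Burdzy): reduce by compactness of $M$ and the interior Harnack inequality (Theorem~\ref{thm:Hi}) to a one-scale statement in a single Lipschitz box around a boundary point $Q$, introduce a corkscrew point $A_Q$ and the harmonic measure $\omega(x)$ of a window $\Gamma$ on the spherical part of $\prt B(Q,2r_0)$, prove the two-sided bound $c_2\,w(A_Q)\,\omega(x)\leq w(x)\leq c_3\,w(A_Q)\,\omega(x)$, and divide the two bounds for $u$ and $v$. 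The localization bookkeeping (choice of $r_0$ relative to $\dist(M,V^c)$, chaining $A_Q$ back to a fixed interior point) and the maximum-principle derivation of the upper bound from the Carleson estimate and the harmonic-measure comparison are all correctly arranged. The caveat is that the two ingredients you label (a) and (b) --- the Carleson estimate $\sup_{D\cap\ol{B(Q,2r_0)}}w\leq c\,w(A_Q)$ and the bound $\P[x](B_{\tau_U}\in D\cap\prt B(Q,2r_0))\leq c\,\omega(x)$ --- are not side technicalities but the entire analytic content of the boundary Harnack principle; the iteration over dyadic Lipschitz boxes that proves them is where the Lipschitz character of $\prt D$ is actually used, and you explicitly leave it out. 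So as a self-contained proof the proposal has a genuine gap at precisely that point; as a reduction of the theorem to the standard lemmas of the cited reference it is accurate, and since the paper itself simply invokes \cite{bassPTA}, deferring those lemmas to the same source is a defensible way to close it.
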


The next theorem is a simplified version of Theorem 1 of
\cite{aikawa01}.

\begin{theorem}
Assume that $D$ is a Lipschitz domain. Then there exist constants
$r_0 = r_0(D) >0$, $c=c(D)<\infty$ and $a=a(D)>1$ such that if
$z\in\partial D$ and $0<r\leq r_0$ then for all functions $u$ and
$v$ that are bounded, positive and harmonic on $D\cap B(z,ar)$,
and vanishing continuously on $\partial D\cap B(z,ar)$, we have
  \begin{equation*}
    \frac{u(x)}{v(x)}\leq c\frac{u(y)}{v(y)},\quad x,y\in D\cap B(z,r).
  \end{equation*}
  \label{thm:unibHp}
\end{theorem}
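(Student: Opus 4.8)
The plan is to deduce the scale-invariant estimate of Theorem~\ref{thm:unibHp} from the fixed-domain boundary Harnack principle of Theorem~\ref{thm:bHp} by exploiting the fact that the class of Lipschitz graph domains with a prescribed Lipschitz constant is closed under translations and dilations, combined with a normal-families argument and the scale-invariant Carleson (``box'') estimate. First I would localize: cover $\prt D$ by finitely many balls $B_1,\dots,B_n$ with $D\cap B_i=\Gamma_i\cap B_i$, where $\Gamma_i=\set{(y^1,\dotsc,y^d)\text{ in }CS_i:y^d>F_i(\wt y)}$ has Lipschitz constant $L$; fix $a>2$ (to be chosen depending only on $d$ and $L$) and let $r_0=r_0(D)>0$ be small enough that $B(z,ar_0)$ lies inside some $B_i$ for every $z\in\prt D$. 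Then for $0<r\le r_0$ one has $D\cap B(z,ar)=\Gamma_i\cap B(z,ar)$ and $\prt D\cap B(z,ar)=\prt\Gamma_i\cap B(z,ar)$, so a function harmonic on $D\cap B(z,ar)$ and vanishing continuously on $\prt D\cap B(z,ar)$ is exactly one with the same property relative to $\Gamma_i$; hence it suffices to prove the estimate with $D$ replaced by a single Lipschitz graph domain $\Gamma$ of constant $L$, uniformly in $z\in\prt\Gamma$ and $r>0$.

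Next I would rescale. For $z\in\prt\Gamma$ and $r>0$ the set $\Gamma_{z,r}:=r^{-1}(\Gamma-z)$ is again a Lipschitz graph domain with constant $L$ and with $0\in\prt\Gamma_{z,r}$, and $w\mapsto z+rw$ maps $\Gamma_{z,r}\cap B(0,a)$ onto $\Gamma\cap B(z,ar)$ preserving harmonicity and continuous boundary vanishing. Writing $\mathcal F_L$ for the family of all Lipschitz graph domains of constant $L$ having $0$ on the boundary, the theorem reduces to the uniform statement: there are $a=a(d,L)>2$ and $c=c(d,L)<\infty$ such that for every $\Gamma'\in\mathcal F_L$ and all bounded positive harmonic $u,v$ on $\Gamma'\cap B(0,a)$ vanishing continuously on $\prt\Gamma'\cap B(0,a)$ one has $u(x)/v(x)\le c\,u(y)/v(y)$ for $x,y\in\Gamma'\cap B(0,1)$. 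The gain is that the constant must now be uniform over $\mathcal F_L$.

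I would prove this in two stages, both resting on the scale-invariant Carleson estimate and a uniform boundary barrier for Lipschitz graph domains, obtained by chaining Theorem~\ref{thm:Hi} against an explicit barrier in the cone-fat complement. Stage one handles interior points: for each $\delta>0$ there is $c_\delta(d,L)$ with $u(x)/v(x)\le c_\delta\,u(y)/v(y)$ whenever $x,y\in\Gamma'\cap B(0,1)$ and $\dist(x,\prt\Gamma'),\dist(y,\prt\Gamma')\ge\delta$. If this failed, choose $\Gamma'_k\in\mathcal F_L$, functions $u_k,v_k$, and $x_k,y_k$ at distance $\ge\delta$ from the boundary with ratio exceeding $k$; normalize $u_k(A_0)=v_k(A_0)=1$ at the fixed point $A_0=(0,\dots,0,\tfrac12)\in\Gamma'_k\cap B(0,1)$, which sits at distance at least $\tfrac12(1+L^2)^{-1/2}$ from $\prt\Gamma'_k$. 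Chaining the interior Harnack inequality, and using the Carleson estimate to bound $\sup_{\Gamma'_k\cap B(0,3a/4)}u_k\le C(d,L,a)$, the $u_k,v_k$ are uniformly bounded on compacts of $\Gamma'\cap B(0,a)$ away from the boundary, and the barrier estimate endows them with a common modulus of continuity up to $\prt\Gamma'_k\cap B(0,a/2)$. Along a subsequence the graphs converge locally uniformly (Arzel\`a--Ascoli) to some $\Gamma'\in\mathcal F_L$, and $u_k\to u$, $v_k\to v$ locally uniformly to positive harmonic functions on $\Gamma'\cap B(0,a)$ vanishing continuously on $\prt\Gamma'\cap B(0,a/2)$, with $x_k\to x$, $y_k\to y$ still at distance $\ge\delta$ from $\prt\Gamma'$; then Theorem~\ref{thm:bHp} (applied to a bounded Lipschitz subdomain $D'$ with $\Gamma'\cap B(0,a/2)\subset D'\subset\Gamma'\cap B(0,a)$, $M=\ol{B(0,1)}$, $V=B(0,a/2)$) bounds $u(x)/v(x)$ by $c\,u(y)/v(y)$, contradicting the blow-up. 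Stage two passes from interior to arbitrary $x\in\Gamma'\cap B(0,1)$: with $s=\dist(x,\prt\Gamma')$ and $z'\in\prt\Gamma'$ nearest to $x$, the Carleson estimate shows $u$ on $\Gamma'\cap B(z',c_1 s)$ is comparable to its value at a corkscrew point $A_s(z')$ at scale $s$; running the same comparison for $v$ and invoking Stage one at scale $s$ makes the harmonic-measure ``exit factors'' for $u$ and $v$ cancel, so $u(x)/v(x)$ and $u(A_s(z'))/v(A_s(z'))$ are comparable, and since corkscrew points lie at a definite relative distance from the boundary, Stage one finishes the argument.

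The step I expect to be the real obstacle is Stage two. The compactness argument only controls interior points, and at a boundary point $u$ and $v$ both vanish, so the limiting ratio is a priori indeterminate; the genuine content is the Carleson estimate together with the comparison of the harmonic measure of the ``exit window'' for $u$ and for $v$ at a small scale, which is the classical core of Dahlberg's proof of the boundary Harnack principle and the one place where scale invariance has to be tracked by hand rather than inherited from Theorem~\ref{thm:bHp}. Since the statement is quoted as a special case of Theorem~1 of \cite{aikawa01}, one may of course simply cite it; the scheme above merely records how it follows from the tools already introduced.
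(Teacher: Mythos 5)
The paper does not prove Theorem~\ref{thm:unibHp}: it is stated explicitly as a simplified version of Theorem~1 of \cite{aikawa01} and is used as an imported tool, so there is no proof in the paper to compare against. Your closing remark---that one may ``simply cite it''---is therefore exactly what the paper does.

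As for the sketch itself, you are right that Stage~2 is the obstacle, but I would put it more strongly: the scheme is not actually a reduction of Theorem~\ref{thm:unibHp} to Theorem~\ref{thm:bHp}. Stage~1 is dispensable---two points of $\Gamma'\cap B(0,1)$ at distance at least $\delta$ from the boundary of a Lipschitz graph domain of constant $L$ can be joined by a Harnack chain whose length is bounded in terms of $d$, $L$, $\delta$ alone, so the interior comparison follows at once from Theorem~\ref{thm:Hi} with no compactness. More to the point, the compactness argument leans on a Carleson estimate and a boundary H\"older barrier that are uniform over the family $\mathcal F_L$; these are themselves scale-invariant estimates with constants depending only on $d$ and $L$, and once you have them in hand, together with the lower harmonic-measure bound your Stage~2 uses, you are already running the classical proof of the scale-invariant boundary Harnack principle, with Theorem~\ref{thm:bHp} playing no role at all. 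Read carefully, your sketch is an outline of how one proves Theorem~\ref{thm:unibHp} from scratch, not how one deduces it from the fixed-domain version. That is not a mathematical gap---the outline is essentially the standard argument---but the framing as a normal-families deduction from Theorem~\ref{thm:bHp} is misleading, and the paper sidesteps the entire issue by citing \cite{aikawa01}.
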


\begin{remark}\label{rem:a29-1}
Theorem \ref{thm:unibHp} can be used to estimate the constant
$c(M,V,D)$ in Theorem \ref{thm:bHp} as follows. Suppose that
$r_0$ and $a$ are as in Theorem \ref{thm:unibHp} and we can
find balls $B_i(x_i, r_i)$, $i=1, \dots, n$, and $B'_j(y_j,
\rho)$, $j=1,\dots,m$, $\rho>0$, $r_i \leq r_0$, $x_i \in \prt
D$, $y_j \in D$, $M \subset \bigcup_i B_i(x_i, r_i) \cup
\bigcup_j B'_j(y_j, \rho)$, and $ \bigcup_i B_i(x_i, a r_i)
\subset V $ and $  \bigcup_j B'_j(y_j, 2\rho) \subset D$. A
simple chaining argument based on Theorems \ref{thm:Hi} and
\ref{thm:unibHp} then shows that the constant $c(M,V,D)$ in
Theorem \ref{thm:bHp} depends only on $n,m$ and $D$.
\end{remark}

Next we recall some notation and results from
\cite{burkholder77}. Fix $d\geq 2$ and $p>0$. Let
\begin{equation}\label{eq:h}
  h(\theta)=h_{p,d}(\theta)=F\left( -p,p+d-2;(d-1)/2;(1-\cos\theta)/2 \right),
\end{equation}
where
\begin{equation*}
  F(a,b;c;x)=\sum_{k=0}^{\infty}\frac{(a)_k(b)_k}{(c)_kk!}x^k,\quad |x|<1,
\end{equation*}
denotes the hypergeometric function and $(a)_k=a(a+1)\dotsc(a+k-1)$, $(a)_0=1$. The
function $h$ has at least one zero in $(0,\pi)$; let $\theta_{p,d}$ denote the smallest
one. The quantity $\theta_{p,d}$ is strictly decreasing in $p$ for any fixed $d\geq 2$,
and strictly increasing to $\pi/2$ in $d$ for any fixed $p>1$. In particular, if $p=2$,
then
\begin{equation*}
  h_{2,d}(\theta)=1-\frac{d}{d-1}\sin^2\theta,
\end{equation*}
$\theta_{2,d}=\arccos\frac{1}{\sqrt d}$ and
$\cot\theta_{2,d}=\frac{1}{\sqrt{d-1}}$. Therefore
$\theta_{2,2}=\pi/4$ and $p<2$ is equivalent to
$\cot\theta_{p,d}<\frac{1}{\sqrt{d-1}}$.

For $d\geq 2$ and $p>0$ we let $\theta$ be the angle between
$y$ and $(0,\dotsc,0,1)$,
\begin{equation*}
  K_{p,d}=\set{y\in\R^d:y\neq 0,\, 0\leq\theta<\theta_{p,d}},
\end{equation*}
and let $O$ denote the axis of $K_{p,d}$. Obviously $p<p'$
implies $K_{p',d}\subset K_{p,d}$. We will drop the subscripts
$p$ and $d$ and write $K$ instead of $K_{p,d}$ whenever there
is no danger of confusion.

The function $ v(x)=|x|^p h(\theta)$, where $h$ is given by
\eqref{eq:h}, is positive and harmonic inside $K$ and
continuous on $\ol K$ with $v(x)=0$ for $x\in\partial K$.

Let $(\P[x],X_t)$ be $d$-dimensional Brownian motion and for a
Borel set $A\subset \R^d$ define
\begin{equation}\label{eq:hittime}
  T_A=\inf\left\{ t>0:X_t\in A \right\}.
\end{equation}

\begin{lemma}
Let $F$ denote the intersection of $K=K_{p,d}$ and a hyperplane
orthogonal to $O$. Let $z_0$ be the point of intersection of
$O$ with $F$ and assume that $z_0 \in K$. There exists
$c=c(p,d)$ such that for all $z_1,z_2\in O$ with
$|z_0|<|z_1|<|z_2|$, we have
\begin{equation}
  \label{eq:mainestimate}
  \frac{\P[z_2]\left( T_F<T_{\partial K} \right)}{\P[z_1]\left( T_F<T_{\partial
  K} \right)} \geq c\,\left( \frac{|z_2|}{|z_1|}  \right)^{2-d-p}.
\end{equation}
\label{lem:main}
\end{lemma}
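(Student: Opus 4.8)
The plan is to reduce the ratio estimate~\eqref{eq:mainestimate} to a comparison between harmonic functions on the cone $K$ that vanish on $\partial K$, and then invoke the explicit harmonic function $v(x)=|x|^p h(\theta)$ together with the boundary Harnack principle. First I would fix the hyperplane section $F$ and consider, for a point $z\in O$ with $|z|>|z_0|$, the function
\begin{equation*}
  u(z) = \P[z]\bigl( T_F < T_{\partial K} \bigr).
\end{equation*}
This is a bounded harmonic function on the truncated cone $K_F := K \cap \{\,|x| > |z_0|\,\}$ (more precisely on the component of $K$ on the far side of $F$), it vanishes continuously on $\partial K \cap \{|x|>|z_0|\}$, and it equals $1$ on $F$. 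The idea is that near infinity along the axis, $u$ must be comparable to the minimal positive harmonic function in $K$ that vanishes on $\partial K$, which by the general theory (Martin boundary of a cone) is a constant multiple of $v(x)=|x|^p h(\theta)$. Thus I expect $u(z_1)/u(z_2)$ to be comparable to $v(z_1)/v(z_2) = (|z_1|/|z_2|)^p$, which after rearranging is exactly the claimed bound $(|z_2|/|z_1|)^{-p}$ up to the extra factor $(|z_2|/|z_1|)^{2-d}$; that extra factor should come from comparing $u$ not to $v$ directly but through a Green-function or last-exit decomposition, where the kernel on $F$ carries a homogeneity of degree $2-d$.

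More concretely, here are the steps I would carry out. (1) Write $u(z) = \int_F G_{K_F}(z,\xi)\,\sigma(d\xi)$ or, better, use the identity $\P[z](T_F<T_{\partial K}) = \E[z][\,h_F(X_{T_F}); T_F < T_{\partial K}\,]$ and the strong Markov property to express, for $|z_2|>|z_1|$,
\begin{equation*}
  \P[z_2](T_F < T_{\partial K})
  = \E[z_2]\bigl[\, \P[X_{T_{F_1}}](T_F < T_{\partial K}) \,;\, T_{F_1} < T_{\partial K} \,\bigr],
\end{equation*}
where $F_1$ is the section through $z_1$; this already gives one inequality relating the two probabilities to the harmonic measure of $F_1$ from $z_2$ in the region between $F_1$ and $\partial K$. (2) Bound the harmonic measure $\P[z_2](T_{F_1}<T_{\partial K})$ from below by a constant depending only on $p,d$ (scaling: the configuration of $z_2$, $F_1$ and the cone is, after rescaling by $|z_1|$, one of a one-parameter family, and one checks the infimum over the relevant range is positive — here one uses that the cone has opening angle strictly less than $\theta_{p,d}$ so the section is genuinely interior). (3) On the section $F_1$, compare $\P[\,\cdot\,](T_F<T_{\partial K})$ with the explicit harmonic function $v$ using the boundary Harnack principle, Theorem~\ref{thm:bHp}: both $u$ and $v$ are positive harmonic in a neighborhood of $F_1$ inside $K$ and vanish on $\partial K$, so their ratio is controlled on $F_1$ by the ratio at the axial point $z_1$; the scale invariance of the cone (apply the dilation $x\mapsto |z_1|^{-1} x$) makes the constant depend only on $p$ and $d$. (4) Combine: on $F_1$, $u(\xi) \geq c\, u(z_1)\, v(\xi)/v(z_1) = c\, u(z_1)\, |z_1|^{-p} |\xi|^p h(\theta_\xi)$, integrate against the harmonic measure from $z_2$, and use that $v$ is itself harmonic so $\E[z_2][v(X_{T_{F_1}}); T_{F_1}<T_{\partial K}] \le v(z_2) = |z_2|^p h(0)$ — wait, this goes the wrong way, so instead one integrates $u$ on $F_1$ against harmonic measure and bounds below using step~(2) and the pointwise lower bound for $u$ on the part of $F_1$ near the axis, obtaining $\P[z_2](T_F<T_{\partial K}) \ge c\, |z_1|^{-p}\,u(z_1)\cdot(\text{contribution of the near-axis part of }F_1)$; the homogeneity bookkeeping of the near-axis contribution (an integral of $|\xi|^p$ against a $(d-1)$-dimensional measure that scales like $|z_1|^{d-1}$ against a Green kernel scaling like $|z_1|^{2-d-(d-1)}\cdot|z_1|^{?}$) is what produces the exponent $2-d-p$.

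The main obstacle, I expect, is step~(3)–(4): getting the exponent exactly right and, in particular, making sure all constants depend only on $p$ and $d$ and not on the positions $z_1,z_2$. The cleanest route is probably to avoid the Green-kernel computation entirely and instead run the boundary Harnack principle \emph{twice} — once near $F_1$ and once near $F$ — together with a Carleman-type or scaling argument: rescale so that $|z_1|=1$, apply Theorem~\ref{thm:bHp} with a compact set $M$ and open set $V$ chosen uniformly (using Remark~\ref{rem:a29-1} to see the constant is uniform over the relevant truncated cones), compare $u$ with $v$ on a fixed compact piece of the axis, and then use the Markov property to transport the estimate out to $z_2$, picking up exactly the homogeneity factor $(|z_2|/|z_1|)^{p}$ from $v$ and the factor $(|z_2|/|z_1|)^{2-d}$ from the normalization of harmonic measure / the Poisson kernel of the far section. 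The delicate point is that the naive comparison gives the exponent $-p$ for the ratio $u(z_1)/u(z_2)$, and one must track where the additional $d-2$ comes from; I believe it enters because $T_F<T_{\partial K}$ records hitting a codimension-one set, so the correct comparison function is not $v$ itself but the $v$-Green function or equivalently $v$ evaluated and then renormalized by the surface measure of the section, which scales with an extra power $d-2$. Verifying this carefully — and checking that $z_0\in K$ (i.e.\ the section is nondegenerate) is exactly what is needed for the constants to stay bounded — is the crux of the argument.
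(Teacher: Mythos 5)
There is a genuine gap, and it sits exactly where the exponent $2-d-p$ has to be produced. Your step (2) --- bounding $\P[z_2]\left(T_{F_1}<T_{\partial K}\right)$ below by a constant depending only on $p$ and $d$ --- is false: after rescaling by $|z_1|$ the configuration still depends on the unbounded parameter $|z_2|/|z_1|$, and since $\P[z]\left(T_{F_1}<T_{\partial K}\right)\to 0$ as $|z|\to\infty$ along $O$, the infimum over that range is $0$, not positive. Worse, the rate at which this very hitting probability decays as $|z_2|/|z_1|\to\infty$ is precisely what the lemma asserts, so any repaired version of (2) that inserts a lower bound of the right order is circular. The same problem resurfaces in (3)--(4): a boundary Harnack comparison of $u(z)=\P[z]\left(T_F<T_{\partial K}\right)$ with $v(x)=|x|^p h(\theta)$ inside the cone cannot give the decay, because $u$ tends to $0$ at infinity while $v$ grows; your own bookkeeping shows the mismatch (the ``extra'' factor $(|z_2|/|z_1|)^{2-d}$), and the suggestion that it comes from a surface-measure or Poisson-kernel normalization is never substantiated --- you yourself flag it as the unverified crux.

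The missing idea is an inversion. In the paper one sets $I(x)=x/|x|^2$, lets $K_*$ be the unbounded component of $K\setminus F$, and observes that the Kelvin transform $\wt u(x)=|x|^{2-d}\,u\left(I(x)\right)$ is positive and harmonic in $I(K_*)$ and vanishes on the lateral boundary near the vertex; the ordinary boundary Harnack principle (Theorem \ref{thm:bHp}) near $0$ compares $\wt u$ with $v$, and undoing the inversion via $u(x)=|x|^{2-d}\wt u\left(I(x)\right)$ yields $u(z_2)/u(z_1)\geq c\,(|z_2|/|z_1|)^{2-d}\,(|z_2|/|z_1|)^{-p}$, which is exactly \eqref{eq:mainestimate}; a Harnack chaining handles axis points outside the inverted region. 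Equivalently, the correct comparison function for $u$ at infinity is the conjugate homogeneous harmonic function $|x|^{2-d-p}h(\theta)$ rather than $v$, and the Kelvin transform is the device that converts this ``boundary Harnack principle at infinity'' into the standard one at the vertex, supplying the factor $|x|^{2-d}$ you could not account for. Since your outline contains neither the inversion nor the conjugate function, the exponent $2-d-p$ is never actually derived, and the argument as proposed does not close.
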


\begin{proof}
Let $K_*$ be the unbounded component of $K\setminus F$ and
\begin{equation*}
  u(z)=\P[z]\left( T_F<T_{\partial K} \right),\quad z\in K_*.
\end{equation*}
Then $u$ is positive and harmonic in $K_*$ and continuous on
$\ol K_* \setminus (F\cap \prt K)$, with $u(z)=0$ for
$z\in\partial K \setminus F$. It is easy to see that $u(x) \to
0$ as $|x| \to \infty$.

If $I(x)=x/|x|^2$, then the function $\wt u(x)=|x|^{2-d}
u\left( I(x) \right) $ is positive and harmonic in $\wt K =
I(K_*)$ (see Lemma 1.18 of \cite{bassPTA}). The function $\wt
u$ vanishes continuously on $\prt \wt K \setminus I(F)$. Let
$K' = (1/2) \wt K$. Recall that $ v(x)=|x|^p h(\theta) $ is
positive and harmonic inside $K$ and continuous on $\ol K$ with
$v(x)=0$ for $x\in\partial K$. By the boundary Harnack
principle,
\begin{equation}\label{eq:1a}
  \frac{\wt u(z)}{\wt u(z')}\geq c\frac{v(z)}{v(z')},
\end{equation}
for $z,z'\in K'$, where $c$ depends on $\wt K$ and $K'$ and
does not depend on $z$ and $z'$. Note that $ u(x)=|x|^{2-d} \wt
u\left( I(x) \right) $. Hence, for $z_1,z_2\in O \cap I(K')$,
\begin{equation*}
  \frac{u(z_2)}{u(z_1)}
 = \frac{ |z_2|^{2-d} \wt u(I(z_2))}
 {|z_1|^{2-d}\wt u(I(z_1))}
 \geq c\frac{ |z_2|^{2-d} v(I(z_2))}
 {|z_1|^{2-d} v(I(z_1))}
 = c\frac{ |z_2|^{2-d} |z_2|^{-p} h(0)}
 {|z_1|^{2-d}  |z_1|^{-p} h(0)}
  = c\left( \frac{|z_2|}{|z_1|} \right)^{2-p-d}.
\end{equation*}
The inequality holds for all $z_1,z_2\in O \cap I(K_*)$
(possibly with a different value of $c$) because the function
$u$ is bounded below and above on $O \setminus I(K')$ by
strictly positive and finite constants. This completes the
proof of \eqref{eq:mainestimate}.
\end{proof}

We will use the following estimate in the proof of Lemma
\ref{lem:4}.

\begin{lemma}\label{lem:5a}
There exists a cone $K'\subset K=K_{p,d}$ and a constant
$c=c(K,K')$ such that for $x\in K'$ and $t\geq|x|^2$,
  \begin{equation}\label{eq:R>t}
    c^{-1}\left( \frac{t}{|x|^2} \right)^{-\frac{p}{2}}
    \leq \P[x]\left( T_{\partial K}>t \right)
    \leq c\left( \frac{t}{|x|^2} \right)^{-\frac{p}{2}}.
  \end{equation}
\end{lemma}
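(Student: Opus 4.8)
The plan is to derive the survival-probability estimate \eqref{eq:R>t} from the harmonic-function estimate that is essentially contained in Lemma \ref{lem:main}, via a scaling argument together with a standard comparison between $\P[x](T_{\partial K}>t)$ and the harmonic measure of a suitable ``exit set'' for the domain $K \cap B(0,\sqrt t)$. The key observation is that the cone $K$ is scale-invariant: if $X$ is Brownian motion started at $x$, then $t^{-1/2} X_{t\,\cdot}$ is Brownian motion started at $t^{-1/2} x$, and $T_{\partial K}$ scales like $|x|^2$. Hence it suffices to prove, for a fixed reference time (say $t=1$) and all $x \in K'$ with $|x| \le 1$, the two-sided bound $c^{-1}|x|^p \le \P[x](T_{\partial K} > 1) \le c|x|^p$; the general case follows by applying this to the rescaled point $x/\sqrt t \in K'$ (here we must choose $K'$ so that $x\in K'$ implies $x/\sqrt t\in K'$ for all $t\ge |x|^2$, i.e. $K'$ itself a sub-cone with vertex at the origin, which is automatic).

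For the fixed-time bound, I would relate $\P[x](T_{\partial K}>1)$ to the function $v(x)=|x|^p h(\theta)$. The upper bound is the easier direction: by the support theorem / Harnack-type arguments, on the event $\{T_{\partial K} > 1\}$ the path is unlikely to travel far, so one compares with $\P[x](T_{\partial K} > T_{F})$ where $F$ is a cross-section of $K$ at a fixed distance, say at $|y|=1/2$; this latter probability is a harmonic function of $x$ vanishing on $\partial K$, hence by the boundary Harnack principle (Theorem \ref{thm:bHp}), applied in $K \cap B(0,1/2)$ comparing it to $v$, it is bounded above by $c\,v(x)/v(x_0) \le c'|x|^p$ for $x$ in a sub-cone $K'$ near the axis with $|x| \le 1/4$, say. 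For the lower bound one runs a similar comparison from below, using that $\P[x](T_{\partial K}>1)$ dominates the probability of first reaching a fixed far cross-section $F$ (at $|y|=2$, say) before hitting $\partial K$ \emph{and then} surviving an additional unit of time from a point bounded away from $\partial K$ — the latter is bounded below by a positive constant by the Harnack inequality (Theorem \ref{thm:Hi}), and $\P[x](T_F < T_{\partial K})$ is again comparable to $v(x) \asymp |x|^p$ by the boundary Harnack principle, exactly as in the proof of Lemma \ref{lem:main}.

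The main obstacle I anticipate is not any single estimate but the bookkeeping of \emph{which} sub-cone $K'$ one can use and making the two comparisons (from a near cross-section for the upper bound, from a far cross-section for the lower bound) fit together uniformly. Concretely, the boundary Harnack principle gives a constant depending on the compact set and the open neighborhood, so one must fix these geometric data once and for all (for the reference time $t=1$) and only then let $x$ range over $K'$ with $|x|$ small; after rescaling, the same constant works for all $t \ge |x|^2$. A secondary technical point is the behavior as $\theta \to 0$: since $h(0) > 0$ and $h$ is continuous, restricting to a sub-cone $K'$ with half-angle strictly less than $\theta_{p,d}$ keeps $h(\theta)$ bounded between two positive constants, so that $v(x) \asymp |x|^p$ on $K'$ with constants depending only on $K,K'$, which is exactly what \eqref{eq:R>t} asserts.
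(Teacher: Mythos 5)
The paper itself does not prove this lemma: its ``proof'' is a citation to known results on exit times from cones (DeBlassie, Ba\~nuelos--Smits, Bass--Burdzy, Meyre--Werner), so any self-contained argument such as yours is by default a different route. Your scaling reduction is correct (since $K$ and $K'$ are cones, $\P[x](T_{\partial K}>t)=\P[x/\sqrt t](T_{\partial K}>1)$, and it suffices to prove $\P[y](T_{\partial K}>1)\asymp |y|^p$ for $y\in K'$, $|y|\le 1$), and your lower bound is sound: by the boundary Harnack comparison with $v(y)=|y|^p h(\theta)$, exactly as in Lemma \ref{lem:main}, the probability of reaching a compact piece $F'$ of a far cross-section (kept at distance $\ge\delta$ from $\partial K$) before $\partial K$ is at least $c|y|^p$ on $K'$, and from $F'$ the probability of surviving one further unit of time is bounded below; the strong Markov property multiplies the two.

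The gap is in the upper bound. The step ``on $\{T_{\partial K}>1\}$ the path is unlikely to travel far, so compare with $\P[x](T_F<T_{\partial K})$'' does not work as stated: surviving one unit of time does not force the path to reach the cross-section $F$ before $\partial K$, and the dangerous event is precisely the opposite one, namely that the path spends a full unit of time in the bounded piece of $K$ below $F$ without hitting $\partial K$. That probability does tend to $0$ as $y\to 0$, but showing it is $O(|y|^p)$ is the nontrivial half of the upper bound and does not follow from the support theorem or the interior Harnack inequality. To close it you need an additional ingredient, for instance: (a) a dyadic decomposition over the highest cross-section $F_k$ (at height $2^{-k}$) reached, combining the harmonic-measure estimate $\P[y](T_{F_k}<T_{\partial K})\le c(2^k|y|)^p$ (uniform in $k$ by scaling and BHP) with the tail bound $\exp(-c4^k)$ for remaining a fixed time in a region of diameter $\sim 2^{-k}$, and summing over $k$; or (b) an eigenfunction/intrinsic-ultracontractivity bound for the truncated cone, $\P[y](\text{survive time }1)\le C\varphi_1(y)$, plus a comparison of $\varphi_1$ with $v$ near the vertex; or (c) in the range $p<2$ actually used in the paper, Markov's inequality $\P[y](\,\cdot\,>1)\le \E[y](\text{exit time of the truncated cone})$ together with Theorem \ref{thm:main} applied to the truncated cone, whose Lipschitz constant $\cot\theta_{p,d}$ is $<1/\sqrt{d-1}$ exactly when $p<2$. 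Note, however, that the lemma as stated allows all $p>0$, and fix (c) fails for $p\ge 2$ because then the expected exit time is $\asymp|y|^2$, not $|y|^p$; so as written your proposal proves only the lower half of \eqref{eq:R>t}.
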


\begin{proof}
See \cite{banuelossmits97,bassburdzy96,deblassie87} or
\cite{meyrewerner95}.
\end{proof}

\section{A boundary Harnack principle}
\label{sec:bhp}

Let $D\subset\R^d$, $d\geq 2$, be a bounded Lipschitz domain
and let $A\subset D$ be a compact set with $\inter
A\neq\emptyset$. For $x\in D$, define
\begin{align*}
  f(x)&=\P[x](T_A<T_{\partial D}),\\
  g(x)&=\E[x]T_{\partial D}.
\end{align*}

\begin{theorem}\label{thm:main}
Assume that the Lipschitz constant $L$ of $D$ satisfies $L<
\frac{1}{\sqrt{d-1}}$. Then there exists a constant $c=c(A,D)$
such that for all $x\in D$,
  \begin{equation}\label{eq:main}
    \frac{1}{c}\leq\frac{f(x)}{g(x)}\leq c.
  \end{equation}
\end{theorem}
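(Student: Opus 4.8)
The goal is to show that $f(x) = \P[x](T_A < T_{\partial D})$ and $g(x) = \E[x] T_{\partial D}$ are comparable uniformly on $D$. Both functions are positive on $D$, both tend to $0$ at $\partial D$ (the first because Brownian motion started near a Lipschitz boundary point is very likely to exit before reaching the fixed compact set $A$; the second obviously), and crucially $f$ is harmonic on $D \setminus A$ while $g$ satisfies $\Delta g = -2$ on all of $D$ (with $g = 0$ on $\partial D$). So the heart of the matter is a boundary Harnack principle comparing a harmonic function to a function with a constant Laplacian, near $\partial D$. Away from $\partial D$ (on a compact subset of $D$) and on $A$ itself, the ordinary Harnack inequality (Theorem \ref{thm:Hi}) together with positivity and continuity gives two-sided bounds on $f/g$ trivially, so everything reduces to a neighborhood of $\partial D$.

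First I would fix a boundary point $z \in \partial D$, pass to a local coordinate system in which $D$ locally lies above a Lipschitz graph with constant $L < 1/\sqrt{d-1}$, and use the hypothesis $L < 1/\sqrt{d-1}$ to squeeze a truncated cone $K = K_{p,d}$ with some $p < 2$ between the domain and its complement: the condition $\cot\theta_{p,d} < 1/\sqrt{d-1}$ from the preliminaries is exactly what makes this possible, so $D$ locally contains (a piece of) such a cone with vertex at $z$ and the complement of $D$ locally contains the complement of a slightly wider cone. The point of $p < 2$ is that the harmonic function $v(x) = |x|^p h(\theta)$ on the cone decays \emph{slower} than $|x|^2$, while $g$, vanishing on the boundary and with bounded Laplacian, decays \emph{at least} like the distance to the boundary but \emph{no faster} than a comparable harmonic function would force — and one wants $g$ and the harmonic $f$ to have the same boundary decay rate. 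Concretely I would use the cone estimate of Lemma \ref{lem:main} (or an $L^p$/hitting-probability version of it) as a barrier: $f$, being harmonic and vanishing on the part of $\partial D$ near $z$, is squeezed between constant multiples of the cone's harmonic function $v$ restricted near $z$; and one shows $g$ is squeezed between the same multiples of $v$. For the lower bound on $g$, test against the harmonic function $v$ on the inner cone and use that $v$ is bounded below on a sphere at unit distance while $\Delta(g - \eps v) = -2 < 0$ forces $g \geq \eps v$ near $z$; for the upper bound on $g$, compare with the solution of $\Delta w = -2$ on the \emph{larger} cone containing $D$ locally, which by scaling is $|x|^2$ times a bounded function plus a harmonic correction, and then dominate the $|x|^2$ term by $v$ using $p < 2$ and the boundary Harnack principle for the harmonic parts. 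Iterating over the finitely many boundary balls and patching with the interior Harnack chaining (as in Remark \ref{rem:a29-1}) yields the global constant.

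The step I expect to be the main obstacle is the upper bound $f(x)/g(x) \le c$ near the boundary — equivalently, showing $g$ does not decay \emph{faster} than the harmonic function $f$ as $x \to \partial D$. The issue is that $g = \E[x]T_{\partial D}$ could a priori be small not only because $x$ is close to $\partial D$ but because $x$ sits in a thin part of $D$ from which exit is quick; one must rule out that this extra smallness beats the $|x|^p$-type lower bound available for harmonic functions. This is precisely where the Lipschitz-constant restriction does real work: the inner cone $K_{p,d}$ with $p<2$ fits inside $D$ locally, and from any point of that inner cone the expected exit time of $K_{p,d}$ is already $\gtrsim |x|^p \cdot (\text{something})$ by the cone survival estimate (Lemma \ref{lem:5a} integrated in $t$, which converges precisely because $p < 2$... wait, it converges because the tail exponent $p/2 < 1$), hence $g(x) = \E[x]T_{\partial D} \ge \E[x]T_{\partial K_{p,d}} \gtrsim |x|^p$, matching the decay rate of $f$. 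Carrying this through uniformly in the boundary point, and gluing the local estimates via Harnack chaining, is the technical core; everything else is bookkeeping with Theorems \ref{thm:Hi}, \ref{thm:bHp}, \ref{thm:unibHp} and Lemmas \ref{lem:main}, \ref{lem:5a}.
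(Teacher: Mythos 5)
The easy half ($f\le c\,g$ via the strong Markov property at $T_A$) and the reduction to a neighborhood of $\partial D$ are fine, and the one-sided bounds $f\gtrsim v$, $g\gtrsim v$ from the inner cone are correct in spirit. But the central step of your plan --- that near a boundary point \emph{both} $f$ and $g$ are squeezed between constant multiples of the cone function $v(x)=|x|^p h(\theta)$ --- is false, and with it the chain $g\lesssim v\lesssim f$ collapses. Take $D$ smooth (a ball, say), which certainly has $L<\frac{1}{\sqrt{d-1}}$; your construction forces some $p\in(1,2)$, yet along the inward normal at a boundary point $f(x)\asymp\dist(x,\partial D)$ and $g(x)=\E[x]T_{\partial D}\asymp\dist(x,\partial D)$, while $v(x)\asymp\dist(x,\partial D)^{p}$ with $p>1$, so neither $f\le c\,v$ nor $g\le c\,v$ holds. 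The inner cone only ever gives lower bounds; the outer cone that a Lipschitz graph provides is \emph{wider}, with a strictly smaller exponent $p''<p$, so your proposed upper bound for $g$ (exit time of the truncated wide cone plus the escape-to-unit-scale term) comes out as $|x|^{p''}$, which cannot be matched against the lower bound $|x|^{p}$ for $f$: the two exponents cannot be made equal for a general Lipschitz $D$, so no bound on $g/f$ follows. (Also, your parenthetical about Lemma \ref{lem:5a} is backwards: for $p<2$ the tail exponent $p/2<1$ makes $\int^\infty \P[x](T_{\partial K}>t)\,dt$ \emph{diverge}, i.e.\ the expected exit time of the infinite cone is infinite; the correct route to $g\gtrsim|x|^p$ is through a truncated cone --- reach a unit-scale cap with probability $\gtrsim|x|^p$, then spend a unit of time.)

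The missing idea is that the decay rates of $f$ and $g$ at a Lipschitz boundary point are not given a priori by any single power, so they must be compared to \emph{each other}, through the Green function of $D$ itself. The paper writes $g(x)=\int_D G(x,y)\,dy=\sum_k a_k$ over dyadic shells $C_k$ around the boundary point, and uses Lemma \ref{lem:main} (the inner cone --- this is the only place $p<2$, i.e.\ $L<\frac{1}{\sqrt{d-1}}$, enters) together with the boundary Harnack principle on $D$ with uniform constants (Theorem \ref{thm:unibHp}, Remark \ref{rem:a29-1}) to get $G(x,y_k)\ge c\,2^{-j(p+d-2)}G(x,y_{k+j})$, hence the geometric decay $a_{k+j}\le c\,2^{j(p-2)}a_k$. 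Summing shows $\E[x]T_{\partial D}$ is dominated by the expected occupation time of a unit-scale set $C_*$, which is at most a constant times $\P[x](T_{C_*}<T_{\partial D})$, and one last boundary Harnack comparison replaces $C_*$ by $A$. Without this mechanism (or an equivalent one, as in Atar--Athreya--Chen), the hard inequality $g\le c\,f$ near $\partial D$ cannot be reached by barrier or squeeze arguments against a fixed power $|x|^p$.
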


\begin{remark}\label{rem:kb1}
The condition $L< \frac{1}{\sqrt{d-1}}$ is sharp. See Example
\ref{ex:kb2} below.
\end{remark}

\begin{proof}[Proof of RHS of \eqref{eq:main}]
Since $A$ is compact, $\inf_{x\in A} \dist(x, D^c) =c_1
>0$. Therefore,
  \begin{equation*}
    \inf_{x\in A}\E[x]T_{\partial D}
 \geq \inf_{x\in A}\E[x]T_{\partial B(x,c_1)}
    =c_2>0.
  \end{equation*}
By the strong Markov property applied at $T_A$, we have for
$x\in D$,
  \begin{equation*}
    \E[x]T_{\partial D}\geq c_2\P[x](T_A<T_{\partial D}),
  \end{equation*}
  which implies the RHS of \eqref{eq:main}.
\end{proof}

\begin{proof}[Proof of LHS of \eqref{eq:main}]
Since $D$ is a bounded Lipschitz domain with Lipschitz constant
$L< \frac{1}{\sqrt{d-1}}$, it is easy to see that there exist
$p\in(0,2)$ and $\rho>0$ with the following properties.

(i) $\dist(A, \prt D) > 2\rho$.

(ii) Consider any $x\in D$ with $\dist(x,\prt D) < \rho
2^{-5}$. Then there exists $x_0\in \prt D$ and an orthonormal
coordinate system $CS=CS_{x_0}$ with the following properties.
The origin of $CS$ is $x_0$, $K_{p,d} \cap B(x_0, 2 \rho)
\subset D \cap B(x_0, 2 \rho)$, and $x\in O$ (that is, $x$
belongs to the axis of $K_{p,d}$). For $r>0$ and integer $k$,
let
  \begin{align*}
E^*_r &=\left\{ y\in\R^d \text {  in  } CS:
|\wt y-\wt x_0|\leq r\tan(\theta_{p,d}),
    |y^d-x_0^d|\leq r \right\},\\
\wt E_k & = E^*_{2^{-k}}.
  \end{align*}
We can choose $x_0$ and $CS$ so that for some Lipschitz
function $F=F_{x_0}:\R^{d-1}\to\R$ with Lipschitz constant $L$,
and all $k$ such that $2^{-k} \leq \rho$,
\begin{equation*}
  D\cap \wt E_k=\set{(y^1,\dotsc,y^d)\text{ in }CS :y^d>F(\wt y)}\cap
  \wt E_k.
\end{equation*}

We fix $x\in D$ with $\dist(x,\prt D) < \rho 2^{-5}$ and the
corresponding coordinate system $CS$ for the rest of the proof.

Let $E_k = \wt E_k \setminus \wt E_{k+1}$ and $C_k=\inter(D\cap
E_k)$ for $k=N_0,\dotsc,N_1$, where
  \begin{align*}
  N_0 = \min\{k: 2^{-k} \leq \rho\}, \qquad
    N_1=\max\left\{k: |x| = x^d \leq 2^{-k-3} \right\}.
  \end{align*}
Also let $C_{N_0-1} = \inter\left( D \setminus \wt
E_{N_0}\right)$ and $C_{N_1+1} = \inter(D\cap \wt E_{N_1+1})$.

Note that $C_i\cap C_j=\emptyset$ if $i\neq j$, and $D= \ol
C_{N_0-1}\cup\dotsc \cup \ol C_{N_1+1}$.

  Let $G(x,y)$ denote the Green function for Brownian motion killed on exiting $D$. Then
  \begin{equation}\label{eq:gx}
    g(x)=\E[x]T_{\partial D}=\int_D G(x,y)\,dy
    =\sum_{k=N_0-1}^{N_1+1}\int_{C_k}G(x,y)\,dy.
  \end{equation}

For $k=N_0,\dotsc,N_1$ denote by $y_k$ the midpoint of the line
segment being the intersection of $C_k$ with $x^d$-axis in
$CS$. In other words, $\{y_k\} = \prt E^*_{(3/4)2^{-k}} \cap
O$. Fix $k$ and $j$ such that $j\geq 1$, $k\geq N_0$, $j+k\leq
N_1$ and consider the points $y_k$ and $y_{k+j}$.

Let
  \begin{equation*}
    F_k=\ol C_k\cap\ol C_{k+1}\cap K_{p,d},
  \end{equation*}
and
 \begin{equation*}
   u(z)=\P[z](T_{F_{k+j}}<T_{\partial K_{p,d}}).
 \end{equation*}
By Lemma \ref{lem:main},
  \begin{equation*}
    u(y_k)\geq c_1u(y_{k+j})\left( \frac{2^{-k}}{2^{-k-j}} \right)^{2-p-d}
    = c_1u(y_{k+j})2^{j(2-p-d)},
  \end{equation*}
where $c_1=c_1(p,d)$. By scaling properties of Brownian motion,
$u(y_{k+j})=c_2=c_2(p,d)$, that is, $u(y_{k+j})$ depends only
on $p$ and $d$. We obtain
  \begin{equation}\label{eq:mainineq}
    \P[z](T_{F_{k+j}}<T_{\partial K_{p,d}})\geq c_3 2^{-j(p+d-2)},
  \end{equation}
  where $c_3=c_3(p,d)$.

Let
  \begin{equation*}
    v(z)=\P[z]\left( T_{F_{k+j}}<T_{\partial D} \right).
  \end{equation*}
Note that $v(y_{k+j})\leq 1$ and $v(y_k)\geq u(y_k)\geq c_3
2^{-j(p+d-2)}$, by \eqref{eq:mainineq}.

We will apply Theorem \ref{thm:bHp} with $M = \prt
E^*_{(3/4)2^{-k-j}}$ and $V = E_{k+j}$. It follows from Remark
\ref{rem:a29-1} that the constant $c_5=c(M,V,D)$ may be chosen
independent of $k$ and $j$. The boundary Harnack principle
implies that
  \begin{equation}\label{eq:a29-2}
    \frac{G(x,z)}{G(x,y_{k+j})}\geq c_5 \frac{v(z)}{v(y_{k+j})},
  \end{equation}
for $z\in D\cap M$. The harmonic functions $G(x,\,\cdot\,)$ and
$v$ have zero boundary values on $\prt D \setminus \ol
E^*_{(3/4)2^{-k-j}}$, so the inequality \eqref{eq:a29-2}
extends to all $z \in D \setminus E^*_{(3/4)2^{-k-j}}$, in
particular, it applies to $z= y_k$. Hence,
  \begin{equation}\label{eq:1}
    \frac{G(x,y_k)}{G(x,y_{k+j})}\geq c_5 \frac{v(y_k)}{v(y_{k+j})}
    \geq c_5 c_3 2^{-j(p+d-2)} =c_6  2^{-j(p+d-2)}.
  \end{equation}

Now consider the function
  \begin{equation*}
    h_m(z)=\P[z]\left( T_{\wt E_{m+2}}<T_{\partial D} \right).
  \end{equation*}
By the scaling properties of Brownian motion, $h_m(y_m)\geq c_7>0$
for all $m=N_0,\dotsc, N_1$. By the boundary Harnack principle
(Theorem \ref{thm:bHp}) applied to $u(z)=G(x,z)$, $v(z)=h_m(z)$,
$M=\ol C_m$ and $V=\inter( \wt E_{m-1} \setminus
E^*_{(3/4)2^{-m-1}})$, we have
  \begin{equation*}
    \frac{G(x,y)}{h_m(y)}\leq c_8\frac{G(x,y_m)}{h_m(y_m)}
  \end{equation*}
for $y\in C_m$, where $c_8$ depends only on $D$, by Remark
\ref{rem:a29-1}. Therefore, for $y\in C_m$,
  \begin{equation}\label{eq:kb7}
    G(x,y)\leq c_8\,G(x,y_m)\frac{h_m(y)}{h_m(y_m)}
    \leq c_8\,\frac{1}{c_7}\,G(x,y_m)
    =c_9\,G(x,y_m).
  \end{equation}
This implies
  \begin{equation}\label{eq:2}
      \int_{C_{k+j}}G(x,y)\,dy\leq c_9G(x,y_{k+j})\vol(C_{k+j})
      \leq c_{10}\,2^{-d(k+j)}G(x,y_{k+j}),
  \end{equation}
where $c_{10}$ depends only on $D$.

On the other hand, by the usual Harnack inequality,
  \begin{equation*}
    G(x,y)\geq c_{11} G(x,y_k)
  \end{equation*}
for $y\in B_k=B(y_k,2^{-k-1})$, because $B(y_k,2^{-k-1})
\subset D \setminus \{x\}$. This implies that
  \begin{equation}\label{eq:3}
      \int_{C_k}G(x,y)\,dy\geq c_{11}G(x,y_k)\vol(B_k)
      =c_{12} 2^{-kd} G(x,y_k),
  \end{equation}
where $c_{12}$  does not depend on $k$.

Combining \eqref{eq:1}, \eqref{eq:2} and \eqref{eq:3} we have
  \begin{equation*}
    \int_{C_{k+j}}G(x,y)\,dy\leq c_{13}2^{j(p-2)}\int_{C_k}G(x,y)\,dy,
  \end{equation*}
where $c_{13}=c_{13}(D)$. Fix  $q< 1$. Since $p\in(0,2)$, we may
choose $j$ so large that $c_{13}2^{j(p-2)}\leq q<1$. Let
$a_k=\int_{C_{k}}G(x,y)\,dy$, then
  \begin{equation*}\label{eq:seq}
    a_{k+j}\leq q a_k, \quad k=N_0,\dotsc,N_1-j.
  \end{equation*}
Let $N_2 = \min(N_1, N_0+j-1)$. The last inequality implies
that
 \begin{equation}\label{eq:ET<EsB}
    \sum_{k=N_0}^{N_1}a_k
= \sum_{k=N_0}^{N_2} \sum_{m=0}^\infty a_{k+mj} \I_{\{k+mj \leq N_1\}}
\leq \sum_{k=N_0}^{N_2} \sum_{m=0}^\infty a_{k} q^m
= c_{14} \sum_{k=N_0}^{N_2} a_k.
  \end{equation}

Recall that $G(x,\,\cdot\,)$ has zero boundary values on $\prt
D$, so it is bounded by $\sup_{z\in C_{N_0}} G(x,z)$ on the set
$D\setminus \wt E_{N_0}$. This and \eqref{eq:kb7} imply that
$\sup_{z\in D\setminus \wt E_{N_0}} G(x,z) \leq c_{15} G(x,
y_{N_0})$. We use \eqref{eq:3} to see that
\begin{align}\label{eq:kb8}
a_{N_0-1} &=
\int_{C_{N_0-1}}G(x,y)\,dy
\leq c_{15} G(x, y_{N_0}) \vol(C_{N_0-1})\\
&\leq c_{15} \vol(C_{N_0-1})
c_{12}^{-1} 2^{N_0 d} \int_{C_{N_0}}G(x,y)\,dy
= c_{16} a_{N_0}.\nonumber
\end{align}
Recall the definition of $N_0$ to see that $c_{16}$ depends
only on $D$.

The following calculation is presented in the case $d\geq 3$
only. The case $d=2$ requires minor modifications and is left
to the reader.

Let $\wt G(x,y)$ denote the Green function for Brownian motion
in $\R^d$, and let $\ol G(x,y) $ be the Green function for
Brownian motion in $B(x,2^{-N_1-4})$. It is well known that for
$d\geq 3$, $\wt G(x,y)=c_{17}|x-y|^{2-d}$, where $c_{17}$
depends on $d$, and $\ol G(x,y) = \wt G(x,y) - \wt G(x, z)$,
for $y\in B(x,2^{-N_1-4})$ and $z\in \prt B(x,2^{-N_1-4})$. It
follows that for $|y-x| \leq 2^{-N_1-5}$,
\begin{equation}\label{eq:sep6-3}
 \ol G(x,y)\geq c_{18} \wt G(x,y).
\end{equation}
We have $G(x,y)\leq\wt G(x,y) $ for $y\in D$, and
$\int_{B(x,r)}\wt G(x,y)dy=c_{19} r^2$. Therefore,
\begin{equation}\label{eq:sep6-1}
  a_{N_1+1}=\int_{C_{N_1+1}}G(x,y)dy\leq\int_{C_{N_1+1}}\wt G(x,y)dy\leq
  \int_{B(x,\diam(\wt E_{N_1+1}))}\wt G(x,y)dy=c_{20} 2^{-2N_1}.
\end{equation}
Since $ B(x,2^{-N_1-4}) \subset D$,
\begin{equation}\label{eq:sep6-2}
  G(x,y)\geq \ol G(x,y).
\end{equation}
Put $y_{N_1+1}=(\wt x,x_d+2^{-N_1-5})$. Then by
\eqref{eq:sep6-3} and \eqref{eq:sep6-2},
\begin{equation*}
  G(x,y_{N_1+1})\geq c_{18}\wt G(x,y)=c_{21}(2^{-N_1})^{2-d}.
\end{equation*}
Moreover, by the usual Harnack inequality,
\begin{equation*}
  G(x,y)\geq c_{22} G(x,y_{N_1+1}),
\end{equation*}
for $y\in B(y_{N_1},2^{-N_1-2})$. Therefore,
\begin{equation}\label{eq:sep6-4}
  \begin{split}
    a_{N_1}&=\int_{C_{N_1}}G(x,y)dy\geq\int_{B(y_{N_1},2^{-N_1-2})}G(x,y)dy\\
    &\geq c_{22} G(x,y_{N_1+1})\vol(B(y_{N_1},2^{-N_1-2}))
    \geq c_{23} (2^{-N_1})^{2-d}\cdot
    2^{-N_1d}=c_{24} 2^{-2N_1}.
  \end{split}
\end{equation}
Combining \eqref{eq:sep6-1} and \eqref{eq:sep6-4}, we obtain
\begin{equation}  \label{eq:a_N}
  a_{N_1+1}\leq c_{25} a_{N_1}.
\end{equation}

Let $C_*=C_{N_0-1}\cup \dotsc\cup C_{N_2}$ and note that
$A\subset C_*$. Let $\sigma_{C_*}=\int_0^{T_{\partial
D}}1_{\set{X_s\in C_*}}\,ds$. Then \eqref{eq:ET<EsB},
\eqref{eq:kb8} and \eqref{eq:a_N} imply that
  \begin{equation}\label{eq:ETD<EsB}
    \E[x]T_{\partial D}\leq c_{26} \E[x]\sigma_{C_*}.
  \end{equation}
Since $D$ is bounded, $ \sup_{z\in D}\E[z]T_{\partial D}=c_{27}
<\infty$. By the strong Markov property applied at the hitting
time of $C_*$, for $z\in D$,
  \begin{equation*}\label{eq:EsB<PTB}
    \E[z]\sigma_{C_*}\leq c_{27}\,\P[z](T_{C_*}<T_{\partial D}).
  \end{equation*}
This and \eqref{eq:ETD<EsB} yield
  \begin{equation}\label{eq:kb9}
    \E[x]T_{\partial D}\leq c_{28} \P[x](T_{C_*}<T_{\partial D}).
  \end{equation}

Consider functions
  \begin{eqnarray*}
    \xi_1(z)&=& \P[z](T_A<T_{\partial D}),\\
    \xi_2(z)&=& \P[z](T_{C_*}<T_{\partial D}).
  \end{eqnarray*}
Both functions are positive and harmonic in $D\setminus \ol
C_*$, and continuous on $\ol D\setminus\ol C_*$ with
$u(z)=v(z)=0$ for $z\in\partial D\setminus\ol C_*$. We apply
the boundary Harnack principle with $V = D\setminus \ol C_*$
and $M = \wt E_{N_2+1}$ to see that
  \begin{equation}\label{eq:a29-7}
    \frac{\xi_1(x)}{\xi_2(x)}
    \geq c_{29}\frac{\xi_1(y_{N_2+1})}{\xi_2(y_{N_2+1})}.
  \end{equation}
We use Remark \ref{rem:a29-1} to see that $c_{29}$ may be
chosen so that it depends only on $D$. It follows from the
definitions of $N_0, N_2$ and $j$ that for some constant
$c_{30}$, we have $\dist(y_{N_2+1}, \prt D) > c_{30}$. This
implies that $\xi_1(y_{N_2+1}) = \P[y_{N_2+1}](T_A<T_{\partial
D}) \geq c_{31}$, for some $c_{31}$ depending only on $D$. We
obtain from \eqref{eq:a29-7} that $\xi_1(x)/\xi_2(x) \geq
c_{29}c_{31}$, and this combined with \eqref{eq:kb9} gives
  \begin{equation*}
    \E[x]T_{\partial D}\leq (c_{28}/c_{29}c_{31}) \P[z](T_A<T_{\partial D}).
  \end{equation*}
We have proved the LHS of \eqref{eq:main} for $x$ satisfying
$\dist(x,\prt D) \leq \rho 2^{-5}$.

It is easy to check that $\inf\{f(x) : \dist(x,\prt D) \geq
\rho 2^{-5}\} >0$ and $\sup\{g(x) : x\in D\} < \infty$, so the
LHS of \eqref{eq:main} holds for all $x\in D$.
\end{proof}

\begin{example}\label{ex:kb2}
The condition $L< \frac{1}{\sqrt{d-1}}$ in Theorem
\ref{thm:main} is sharp. To see this, note that for any $L>
\frac{1}{\sqrt{d-1}}$ there is a $p>2$, such that the cone
$K=K_{p,d}$ is a Lipschitz domain with the Lipschitz constant
$L$. Let $r>0$ be such that for every $x\in O$, $B(x, r|x|)
\subset K$. Then $g(x) = \E[x] T_{\prt K} \geq \E[x] T_{\prt
B(x, r|x|)} \geq c_1 r^2 |x|^2$. Recall that $f(x) = \P[x] (T_A
< T_{\prt K})$ and let $u(x) = |x|^p h_{p,d}(\theta)$. By the
boundary Harnack principle applied to $f$ and $u$ in a
neighborhood of 0, $f(x) \leq c_2 |x|^p$ for $x\in O$, $|x|<1$.
Since $p>2$, we cannot have $f(x) \geq c_3 g(x)$ in a
neighborhood of 0. The domain $K$ is unbounded but it is easy
to extend the argument to $K \cap B(0,1)$.
\end{example}

\section{Construction of an auxiliary process from Brownian
excursions}\label{sec:Zproc}

Let $\Omega$ denote the family of all functions
$\omega:[0,\infty)\to \R^d\cup\set{\delta}$ continuous up to
their lifetime $R(\omega)=\inf\set{t\geq 0:\omega(t)=\delta}$
and constantly equal to $\delta$ for $t\geq R$, where $\delta$
denotes the coffin state outside $\R^d$.  Let $X$ be the
canonical process on $\Omega$, i.e., $X_t(\omega)=\omega(t)$
and let $\P[x]$ denote the distribution of Brownian motion
starting from $x\in\R^d$. As in \eqref{eq:hittime}, for a Borel
set $A\subset\R^d$ let $T_A=\inf\set{t>0:X_t\in A}$. Let
$K=K_{p,d}$ for some $p>0$, and let $X'$ denote the process
\begin{equation*}
  X'_t=
  \begin{cases}
    X_t,&\text{for $t<T_{\partial K}$,}\\
    \delta,&\text{otherwise,}
  \end{cases}
\end{equation*}
i.e., $X'$ is the process $X$ killed on exiting $K$. If $X$ has
the distribution $\P[x]$, then $X'$ is called Brownian motion
in $K$ and its distribution is denoted by $\P[x]_K$.

Let $U$ denote the family of all functions
$\omega:[0,\infty)\to K\cup\set{\delta}$ such that
$\omega(0)=0$, continuous up to their lifetime $R$. Let $H^0$
denote a standard excursion law of Brownian motion in $K_{p,d}$
starting from $0$. Namely, $H^0$ is a nonnegative and
$\sigma$-finite measure on $\Omega$ such that $X$ is strong
Markov under $H^0$ with the $\P_K$ transition probabilities and
$H^0\left( \lim_{t\to 0} X_t \neq 0\right)=0$. We have
$H^0(\Omega\setminus U)=0$. The existence of $H^0$ follows from
results of \cite{maison75} and \cite{burdzy87}.

\begin{lemma} \label{lem:4}
  There exists $c\in(0,\infty)$ such that
  \begin{equation}\label{eq:rozklad}
    H^0(R>t)=c\, t^{-\frac{p}{2}},\quad t>0.
  \end{equation}
\end{lemma}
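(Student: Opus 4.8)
The plan is to derive the tail behavior of the excursion lifetime $R$ under $H^0$ from the known polynomial decay of the survival probability $\P[x](T_{\partial K}>t)$ for Brownian motion started at interior points of the cone, as quantified in Lemma~\ref{lem:5a}. The excursion law $H^0$ is an entrance law at the vertex, so the natural strategy is to represent $H^0(R>t)$ via the Markov property at a small time $\eps>0$ by integrating $\P[y](T_{\partial K}>t-\eps)$ against the ``entrance density'' $H^0(X_\eps\in dy)$, then use scaling to reduce everything to a single parameter and extract the power $t^{-p/2}$.

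\textbf{Step 1: scaling.} Brownian motion in the cone $K$ is invariant under the scaling $x\mapsto \lambda x$, $t\mapsto \lambda^2 t$, because $K$ is scale-invariant. The excursion law $H^0$ is, up to a multiplicative constant, the unique $\sigma$-finite excursion law from the vertex with the $\P_K$ transition semigroup; hence it must be a self-similar measure: under $X_t\mapsto \lambda X_{t/\lambda^2}$, $H^0$ is mapped to $\lambda^{\beta}H^0$ for some exponent $\beta$. Consequently $H^0(R>t)$ is a pure power of $t$, say $H^0(R>t)=c\,t^{-\gamma}$, and it remains only to identify $\gamma=p/2$ and to check $0<c<\infty$.

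\textbf{Step 2: identifying the exponent via the Markov property.} For $0<\eps<t$, the strong Markov property of $X$ under $H^0$ at time $\eps$ gives
\begin{equation*}
  H^0(R>t) = \int_{K} \P[y]\big(T_{\partial K}>t-\eps\big)\, H^0(X_\eps\in dy, R>\eps).
\end{equation*}
By the self-similarity from Step~1, $H^0(X_\eps\in dy,R>\eps)$ is, after the scaling $y=\sqrt{\eps}\,z$, a fixed finite measure $\mu(dz)$ times $\eps^{-\gamma+d/2}$ (the $d/2$ coming from the Jacobian); and Lemma~\ref{lem:5a} gives $\P[\sqrt{\eps}z](T_{\partial K}>t-\eps)\asymp \big((t-\eps)/(\eps|z|^2)\big)^{-p/2}$ uniformly for $z$ in a subcone $K'$, while the mass of $\mu$ outside $K'$ contributes a lower-order or comparable term that one controls by a further application of the Markov property (pushing the excursion into $K'$ after an additional bounded time). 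Substituting and letting the $\eps$-dependence cancel forces $\gamma=p/2$, and the integral $\int |z|^{p}\,\mu(dz)$ being finite and positive gives $c\in(0,\infty)$.

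\textbf{The main obstacle} I expect is making Step~2 rigorous: one must know that the entrance measure $H^0(X_\eps\in dy,R>\eps)$ has finite total mass (so the integral converges) and that $\int |z|^p\,\mu(dz)<\infty$, i.e.\ that the entrance law does not put too much mass far out in the cone. This requires an a priori estimate on $H^0(X_\eps\in dy)$ near the ``wide'' part of the cone, which one gets from the construction of $H^0$ in \cite{maison75, burdzy87} together with the Gaussian-type upper bounds for the killed heat kernel in a Lipschitz cone — precisely the domain of validity (a cone with a genuine opening, $p\in(0,2)$ handled via $\theta_{p,d}$) that makes Lemma~\ref{lem:5a} available. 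Once the integrability is in hand, the cancellation of the $\eps$-powers is a routine computation and the constant $c$ is identified; alternatively, one can bypass some of this by comparing $H^0(R>t)$ directly with $\P[x_0](T_{\partial K}>t)$ for a fixed interior point $x_0$ at unit distance, using the Markov property both ways to sandwich $H^0(R>t)$ between constant multiples of $t^{-p/2}$, which is perhaps the cleanest route.
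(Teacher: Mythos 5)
Your overall strategy --- use scale invariance of $K$ to reduce to the two-sided bound $H^0(R>t)\asymp t^{-p/2}$, then pin down the exponent via Lemma~\ref{lem:5a} --- is the same as the paper's, and your Step~1 (self-similarity of $H^0$ forces a pure power, so only the exponent and the positivity/finiteness of the constant need checking) is exactly how the paper closes the argument, by mimicking Proposition~5.1 of \cite{burdzy87}. Your deployment of Lemma~\ref{lem:5a} is also the right key estimate. The difference lies in how you propose to make the comparison with $\P[x](T_{\partial K}>t)$ rigorous.

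Your main route in Step~2 (apply the Markov property under $H^0$ at a fixed small time $\eps$ and integrate $\P[y](T_{\partial K}>t-\eps)$ against the entrance measure $H^0(X_\eps\in dy,\,R>\eps)$) has a genuine gap that you yourself flag: you need the entrance measure to have finite total mass and a finite $p$-th moment $\int|z|^p\,\mu(dz)<\infty$ after rescaling, and also to show that the part of the entrance measure outside the subcone $K'$ where Lemma~\ref{lem:5a} applies does not dominate. None of this is free; controlling the entrance law at the vertex of a Lipschitz cone would require essentially the same boundary-Harnack machinery the paper invokes, so "a further application of the Markov property" does not by itself close the gap. The paper avoids the entrance measure entirely by quoting Theorem~4.1 of \cite{burdzy87}, which gives the representation $H^0(R>t)=c_1\lim_{z\to 0,\,z\in K}\P[z](T_{\partial K}>t)/G_K(z,y_1)$, and then Theorem~2.2 of \cite{burdzy87} (a sharpened boundary Harnack principle at the vertex) to conclude that $G_K(y_\eps,y_1)/\eps^p$ has a positive finite limit, so that $H^0(R>t)=c\lim_\eps\P[y_\eps](T_{\partial K}>t)/\eps^p$; Lemma~\ref{lem:5a} then sandwiches this by constant multiples of $t^{-p/2}$. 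Your "alternative" route at the end (sandwich $H^0(R>t)$ against $\P[x_0](T_{\partial K}>t)$ for a fixed interior $x_0$, using the Markov property both ways) is morally the same device as the paper's representation formula and is indeed the cleaner path; to make it precise you would still want the Burdzy-type boundary Harnack result to justify the limit at the vertex and identify the normalizing constant.
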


\begin{proof}
Let $y_\varepsilon=(0,\dotsc,0,\varepsilon)\in\R^d$ and let
$G_K(x,y)$ denote the Green function for $K$. By Theorem 4.1 of
\cite{burdzy87},
  \begin{equation}\label{eq:eqlimit1}
    H^0\left( R>t \right)=c_1\lim_{\substack{z\to 0\\z\in K}}\frac{\P[z](T_{\partial
    K}>t)}{G_{K}(z,y_1)}.
  \end{equation}
By Theorem 2.2 of \cite{burdzy87}, which is an improvement of
the boundary Harnack principle, there exists $c(K,\varepsilon)$
such that for all functions $h_1$ and $h_2$ which are positive
and harmonic in $K$ and vanish continuously on $\partial K$, we
have
  \begin{equation*}
    c(K,\varepsilon)^{-1}\frac{h_1(y)}{h_2(y)}
    \leq \frac{h_1(x)}{h_2(x)}
    \leq c(K,\varepsilon)\frac{h_1(y)}{h_2(y)},
  \end{equation*}
for all $x,y\in K\cap B(0,\varepsilon)$, and
$\lim_{\varepsilon\to 0} c(K,\varepsilon)=1$. Therefore, the
limit
  \begin{equation*}
    \lim_{\substack{z\to 0\\z\in K}}\frac{h_1(z)}{h_2(z)}
  \end{equation*}
exists and belongs to $(0,\infty)$ for all functions $h_1,h_2$
satisfying the above assumptions. We apply this claim to
$h_1(z)=G_K(z,y_1)$ and $h_2(z)=|z|^p h(\theta)$, to conclude
that
  \begin{equation*}
    \lim_{\varepsilon\to 0}\frac{G_K(y_\varepsilon,y_1)}{\varepsilon^p}=c\in(0,\infty),
  \end{equation*}
  and
  \begin{equation}\label{eq:eqlimit2}
    H^0(R>t)=c\lim_{\varepsilon\to 0}\frac{\P[y_\varepsilon](T_{\partial
    K}>t)}{\varepsilon^p}.
  \end{equation}
By Lemma \ref{lem:5a},
  \begin{equation*}
    c^{-1}t^{-\frac{p}{2}}\leq \frac{\P[y_\varepsilon](T_{\partial
    K}>t)}{\varepsilon^p}\leq ct^{-\frac{p}{2}},
  \end{equation*}
for $t\geq \varepsilon^2$ which implies
$c^{-1}t^{-\frac{p}{2}}\leq H^0(R>t)\leq ct^{-\frac{p}{2}}$,
for $t\geq 0$. Therefore $H^0(R>1)$ is a positive and finite
number.

Now mimicking the proof of Proposition 5.1 of \cite{burdzy87},
using \eqref{eq:eqlimit2} instead of \eqref{eq:eqlimit1}, we
easily see that if $\set{X(t),\,t\geq 0}$ has the distribution
$H^0$, then for every $a>0$ the scaled process
$\set{\sqrt{a}X(t/a),\,t\geq 0}$ has the distribution
$a^{p/2}H^0$. In particular, for every $a>0$
  \begin{equation*}
    H^0(R>t)=a^{p/2}H^0(R>at), \quad t\geq 0,
  \end{equation*}
and putting $a=1/t$ we obtain \eqref{eq:rozklad} with
$c=H^0(R>1)$.
\end{proof}

Let $\lambda$ denote the Lebesgue measure on $\R_+=[0,\infty)$
and let $\mathcal P$ be a Poisson point process on $\R_+\times
U$ with characteristic measure $\lambda\times H^0$, i.e.,
$\mathcal P$ is a random subset of $\R_+\times U$ such that for
every pair $A_1,A_2$ of disjoint nonrandom subsets of
$\R_+\times U$, $\card(\mathcal P\cap A_1)$ and $\card(\mathcal
P\cap A_2)$ are independent random variables with Poisson
distributions with means $(\lambda\times H^0)( A_1)$ and
$(\lambda\times H^0)( A_2)$, respectively (\cite{ito72}). With
probability 1, there are no two points with the same first
coordinate, and therefore the elements of $\mathcal P$ may be
unambiguously denoted by $(t,e_t)$. Let
\begin{equation*}
  R_t=\inf\set{s>0:e_t(s)=\delta}.
\end{equation*}
By abuse of notation, for a generic element $e$ of $U$ we will
write
\begin{equation*}
  R(e)=\inf\set{s>0:e(s)=\delta}.
\end{equation*}

\begin{lemma}
  \label{lem:5}
  If $p\in (0,2)$, then for every $s>0$,
  \begin{equation*}
    \sum_{t\leq s}R_t<\infty,\quad {\rm a.s.}
  \end{equation*}
\end{lemma}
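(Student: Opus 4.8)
The plan is to use the Poisson point process structure of $\mathcal{P}$ together with the tail estimate for $R$ from Lemma~\ref{lem:4}. Fix $s>0$. By the restriction of $\mathcal{P}$ to $[0,s]\times U$, the points $(t,e_t)$ with $t\leq s$ form a Poisson point process whose ``mark'' lifetimes $R_t$ have, in an appropriate sense, intensity governed by $H^0$. The total number of points $(t,e_t)$ with $t\leq s$ and $R_t>\eps$ equals $\card(\mathcal{P}\cap([0,s]\times\{R>\eps\}))$, which is Poisson with mean $s\cdot H^0(R>\eps)=c\,s\,\eps^{-p/2}$ by Lemma~\ref{lem:4}; in particular this is finite for every $\eps>0$, so only finitely many excursions before time $s$ have lifetime exceeding $\eps$. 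Hence $\sum_{t\leq s}R_t<\infty$ if and only if $\sum_{t\leq s}R_t\I_{\{R_t\leq 1\}}<\infty$, and it suffices to control the contribution of the short excursions.

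For the short excursions I would compute the expectation directly. Write
\begin{equation*}
  \E\Big[\sum_{t\leq s}R_t\,\I_{\{R_t\leq 1\}}\Big]
  = s\int_U R(e)\,\I_{\{R(e)\leq 1\}}\,H^0(de)
  = s\int_0^1 H^0(R>t)\,dt
  = c\,s\int_0^1 t^{-p/2}\,dt,
\end{equation*}
where the first equality is the Campbell/Mecke formula for the Poisson point process $\mathcal{P}$ with intensity $\lambda\times H^0$, and the second is Fubini applied to $R(e)\I_{\{R(e)\leq 1\}}=\int_0^1\I_{\{R(e)>t\}}\,dt$ together with the tail formula $H^0(R>t)=c\,t^{-p/2}$ from Lemma~\ref{lem:4}. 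Since $p\in(0,2)$ we have $p/2<1$, so $\int_0^1 t^{-p/2}\,dt=\frac{1}{1-p/2}<\infty$, and therefore $\E[\sum_{t\leq s}R_t\,\I_{\{R_t\leq 1\}}]<\infty$. A finite-expectation random variable is a.s.\ finite, so $\sum_{t\leq s}R_t\,\I_{\{R_t\leq 1\}}<\infty$ a.s.

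Combining the two parts: the long excursions contribute a finite sum a.s.\ (finitely many terms, each finite since $H^0(\Omega\setminus U)=0$ and each $e_t\in U$ has $R_t<\infty$), and the short excursions contribute a finite sum a.s.\ by the expectation bound. Hence $\sum_{t\leq s}R_t<\infty$ a.s., for every fixed $s>0$. Taking a countable sequence $s_n\uparrow\infty$ (or simply noting the statement is ``for every $s>0$'') completes the proof.

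The main obstacle is purely bookkeeping: making sure the Campbell-formula computation is applied to the correct $\sigma$-finite intensity measure (the marks live in $U$, and $H^0$ is $\sigma$-finite rather than finite, which is why the split at lifetime $1$ is needed — one cannot integrate $R$ against $H^0$ on all of $U$ since $H^0(R>t)=c\,t^{-p/2}\to\infty$ as $t\to 0$). The only place where the hypothesis $p<2$ is actually used is the convergence of $\int_0^1 t^{-p/2}\,dt$; for $p\geq 2$ the integral diverges, which is consistent with the fact that the analogous excursion-summation construction is expected to fail in wider cones.
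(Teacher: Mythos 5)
Your proof is essentially the same as the paper's, with the one difference that the paper invokes It\^o's Theorem 4.6 as a black-box criterion (finiteness of $\sum_t\varphi(t,e_t)$ iff $\iint(\varphi\wedge 1)\,dt\,H^0(de)<\infty$), while you verify the sufficiency direction directly by hand via the Campbell formula plus the observation that only finitely many excursions before time $s$ have lifetime exceeding $1$. Both proofs split at lifetime $1$ and reduce to the same two integrals against the tail from Lemma~\ref{lem:4}, with $p>0$ handling the large excursions and $p<2$ handling the small ones; your version is a touch more self-contained but buys nothing materially new.

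One small slip: the identity $R(e)\,\I_{\{R(e)\leq 1\}}=\int_0^1\I_{\{R(e)>t\}}\,dt$ is false when $R(e)>1$ (left side is $0$, right side is $1$); the correct statement is $\int_0^1\I_{\{R(e)>t\}}\,dt=\min\bigl(R(e),1\bigr)$, which \emph{dominates} $R(e)\,\I_{\{R(e)\leq 1\}}$. The chain of equalities should therefore be an inequality at that step,
\begin{equation*}
  \E\Bigl[\sum_{t\leq s}R_t\,\I_{\{R_t\leq 1\}}\Bigr]
  = s\int_U R(e)\,\I_{\{R(e)\leq 1\}}\,H^0(de)
  \leq s\int_0^1 H^0(R>t)\,dt
  = c\,s\int_0^1 t^{-p/2}\,dt<\infty,
\end{equation*}
which still gives exactly what you need; the conclusion is unaffected.
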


\begin{proof}
We use Theorem 4.6 of \cite{ito72}: if $\varphi:\R_+\times
U\to\R_+$ is a measurable function, then
  \begin{equation*}
    \sum_{t}\varphi(t,e_t)<\infty,\quad {\rm a.s.}
  \end{equation*}
  iff
  \begin{equation*}
    \iint_{\R_+\times U}(\varphi(t,e)\wedge 1) dt H^0(de)<\infty.
  \end{equation*}
  In particular, if $\varphi(t,e)=R(e)1_{[0,s]}(t)$, then
  \begin{equation*}
    \sum_{t\leq s}R_t<\infty,\quad {\rm a.s.}
  \end{equation*}
  iff
  \begin{equation*}
    \iint_{[0,s]\times U} (R(e)\wedge 1) dt H^0(de)<\infty.
  \end{equation*}
If we let $U^-=\set{e\in U:R(e)\leq 1}$ and $U^+=\set{e\in
U:R(e)>1}$ then
  \begin{equation*}
    \iint_{[0,s]\times U} (R(e)\wedge 1) dt H^0(de)
    =s\int_{U^-}R(e)H^0(de)+sH^0(U^+).
  \end{equation*}
By Lemma \ref{lem:4},
  \begin{equation*}
    H^0(U^+)=\int_1^\infty H^0(R\in dt)=c\int_1^\infty t^{-p/2-1}dt<\infty,
  \end{equation*}
because $p>0$, and
  \begin{equation*}
    \int_{U^-}RdH^0=\int_0^1tH^0(R\in dt)=c\int_0^1 t\cdot t^{-p/2-1}dt<\infty,
  \end{equation*}
because $p<2$.
\end{proof}

By Lemma \ref{lem:5}, the following process $Z$ with values in
$K_{p,d}\cup\set{0}$, where $p\in(0,2)$, is well defined. For
every $t>0$ the formula
\begin{equation*}
  t=
  \begin{cases}
    s+\sum_{u<r}R_u,&\text{if there exists $r<t$ such that
    $\sum_{u<r}R_u < t \leq \sum_{u\leq r}R_u$},\\
    \sum_{u<r}R_u,&\text{otherwise},
  \end{cases}
\end{equation*}
defines a unique pair $(r,s)$ with $r>0$ and $s\in[0,R_r)$ (in
the first case). Then we define
\begin{equation}\label{eq:a31-1}
  Z_t=
  \begin{cases}
    e_r(t),&\text{if there exists $r<t$ such that
    $\sum_{u<r}R_u < t < \sum_{u\leq r}R_u$},\\
    0,&\text{otherwise}.
  \end{cases}
\end{equation}
The process $Z$ takes values in $K\cup\set{0}$. Let $\sigma_t =
\sum_{s\leq t}R_t$ for $t\geq 0$.

\begin{lemma}\label{lem:stable}
The process $\sigma$ is a stable subordinator with index $p/2$.
\end{lemma}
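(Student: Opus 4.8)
The plan is to identify $\sigma_t = \sum_{s\le t} R_s$ as the sum over the atoms $(s,e_s)$ of $\mathcal P$ with $s\le t$ of the jump sizes $R_s = R(e_s)$, and to recognize this as a classical construction of a subordinator from a Poisson point process. Since $\mathcal P$ is a Poisson point process on $\R_+\times U$ with characteristic measure $\lambda\times H^0$, the process $\sigma$ has stationary independent increments, is nondecreasing, and is therefore a subordinator. What remains is to compute its Lévy measure and check it is that of a stable subordinator of index $p/2$.

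First I would make precise the almost sure finiteness: by Lemma \ref{lem:5}, $\sigma_t<\infty$ a.s.\ for every $t$ (this uses $p\in(0,2)$), so $\sigma$ is a well-defined $[0,\infty)$-valued process with nondecreasing, \cadlag paths. The jumps of $\sigma$ in the time interval $(s_1,s_2]$ are exactly $\{R(e_s): (s,e_s)\in\mathcal P,\ s\in(s_1,s_2]\}$, and by the product structure of the characteristic measure these form a Poisson point process on $(0,\infty)$ with intensity $(s_2-s_1)\,\nu(dx)$, where $\nu$ is the pushforward of $H^0$ under the map $e\mapsto R(e)$, i.e.\ $\nu(t,\infty) = H^0(R>t)$. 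Disjoint time intervals give independent such configurations, hence $\sigma$ has stationary independent increments; combined with monotonicity this makes $\sigma$ a subordinator with Lévy measure $\nu$ (there is no drift, since $\sigma$ is a pure sum of jumps). The Lévy--Khintchine exponent is $\Phi(u) = \int_0^\infty (1-e^{-ux})\,\nu(dx)$.

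Next I would identify $\nu$. By Lemma \ref{lem:4}, $H^0(R>t) = c\,t^{-p/2}$ for $t>0$, so $\nu(t,\infty) = c\,t^{-p/2}$, which means $\nu(dx) = c\,(p/2)\,x^{-p/2-1}\,dx$ on $(0,\infty)$. This is precisely (up to the positive multiplicative constant $c\,p/2$) the Lévy measure of the stable subordinator of index $p/2$; the condition $p/2\in(0,1)$, equivalent to $p\in(0,2)$, is exactly what guarantees $\int_0^\infty (x\wedge 1)\,\nu(dx)<\infty$ so that $\nu$ is a legitimate Lévy measure with no drift needed. Computing $\Phi(u) = c\,(p/2)\int_0^\infty (1-e^{-ux}) x^{-p/2-1}\,dx = c'\,u^{p/2}$ for an explicit constant $c'>0$ (a standard Gamma-function evaluation), or alternatively invoking the scaling from Lemma \ref{lem:stable}'s ingredients directly, confirms that $\sigma$ is a stable subordinator of index $p/2$.

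I do not expect a serious obstacle here; the one point requiring a little care is the passage from "$\mathcal P$ is Poisson with characteristic measure $\lambda\times H^0$" to "$\sigma$ is a subordinator," i.e.\ justifying the stationary-independent-increments property and the absence of a drift term rigorously from Itô's point-process formalism (\cite{ito72}), and making sure the a.s.\ finiteness from Lemma \ref{lem:5} is invoked so that the sum defining each increment converges. Once that bookkeeping is in place, the identification of the index is immediate from Lemma \ref{lem:4}.
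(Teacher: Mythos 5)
Your proposal is correct and follows essentially the same route as the paper: view the jump times and sizes $(t,R(e_t))$ as a Poisson point process on $\R_+\times\R_+$ with characteristic measure $\lambda\times\Pi$ where $\Pi(dx)=H^0(R\in dx)=c\,x^{-p/2-1}dx$ by Lemma \ref{lem:4}, conclude stationary independent increments (hence a subordinator), and identify the index $p/2$ via the Laplace exponent $\int_0^\infty(1-e^{-\lambda x})\Pi(dx)=c'\lambda^{p/2}$. Your extra remarks on invoking Lemma \ref{lem:5} for a.s.\ finiteness and on the absence of a drift term are harmless bookkeeping that the paper leaves implicit.
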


\begin{proof}
The process $\sigma$ is increasing and has values in
$[0,\infty)$. Its paths are right-continuous with left limits.
Note that $\{(t, R(e_t))\}_{e\in \mathcal P}$ is a Poisson
point process on $\R_+\times\R_+$ with characteristic measure
$\lambda\times\Pi$, where $\Pi$ is given by
  \begin{equation*}
    \Pi(dx)=H^0(R\in dx)=c\,x^{-p/2-1}dx,
  \end{equation*}
where the last formula follows from Lemma \ref{lem:4}. This
implies that $\sigma$ is a process with independent and
stationary increments, so $\sigma$ is a L\'evy process.
Moreover $\sigma$ is a subordinator, since it has values in
$[0,\infty)$ only. We use calculations that can be found in
Section 0.5 and on page 73 of \cite{bertoin96} to see that the
Laplace transform of $\sigma$ is
  \begin{equation*}
    \begin{split}
      E\exp(-\lambda \sigma_t)&=\exp\left\{ -t\int_0^\infty(1-\e^{-\lambda x})\Pi(dx)
      \right\}\\
      &=\exp\left\{ -ct\int_0^\infty(1-\e^{-\lambda x})x^{-p/2-1}dx \right\}\\
      &=\exp(-ct\lambda^{p/2}).
    \end{split}
  \end{equation*}
Therefore $\sigma$ is stable with index $p/2$.
\end{proof}

\section{Construction of a Fleming-Viot process}
\label{FVproc}

We recall the following informal description of a Fleming-Viot-type
particle system from \cite{burdzymarch00}. Consider an open set
$D\subset\R^d$ and an integer $N\geq 2$. Let
$\X_t=(X_t^1,\dotsc,X_t^N)$ be a process with values in $D^N$ defined
as follows. Let $\X_0=(x^1,\dotsc,x^N)\in D^N$. Then the processes
$X_t^1,\dotsc,X_t^N$ evolve as independent Brownian motions until the
time $\tau_1$ when one of them, say, $X^j$ hits the boundary of $D$.
At this time one of the remaining particles is chosen uniformly, say,
$X^k$, and the process $X^j$ jumps at time $\tau_1$ to
$X^k_{\tau_1}$. The processes $X_t^1,\dotsc,X_t^N$ continue evolving
as independent Brownian motions after time $\tau_1$ until the first
time $\tau_2>\tau_1$ when one of them hits the boundary of $D$. Again
at the time $\tau_2$ the particle which approaches the boundary jumps
to the current location of a particle chosen uniformly at random from
amongst the ones strictly inside $D$. The subsequent evolution of
$\X$ proceeds in the same way. The above recipe defines the process
$\X_t$ only for $t \leq \tau_\infty = \lim_{k\to \infty} \tau_k$.
There is no natural way to define the process $\X_t$ for $t>
\tau_\infty$. Hence, it is a natural problem to determine whether
$\tau_\infty=\infty$, a.s.

\begin{theorem}\label{thm:exist}
There exists a constant $c=c(N,d)$ such that if $D\subset\R^d$
is a bounded Lipschitz domain with the Lipschitz constant
$L<c(N,d)$, then $\tau_\infty=\infty$, a.s. Moreover, $c(N,d)$
increases in $N$, decreases in $d$ and
 \begin{equation}\label{eq:limit}
   \lim_{N\to\infty}c(N,d)=c(d)=\frac{1}{\sqrt{d-1}}.
 \end{equation}
\end{theorem}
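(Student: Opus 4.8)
The plan is to reduce the finiteness question for $\tau_\infty$ to the behavior of the particle system near the boundary, and then to bound the system from below by the excursion-based process $Z$ constructed in Section \ref{sec:Zproc}. First I would set up the right notion of ``distance to the boundary'' for the configuration $\X_t=(X^1_t,\dots,X^N_t)$: let $\beta_t = \max_j \dist(X^j_t,\partial D)$ or, more usefully, the distance from $\X_t$ to the ``bad'' set where all particles are simultaneously on $\partial D$. The event $\{\tau_\infty<\infty\}$ forces $\beta_{\tau_k}\to 0$ along the jump times, i.e. all particles must approach $\partial D$ simultaneously. The goal is to show this has probability $0$, and the engine for that is a comparison showing that, conditionally on being close to $\partial D$, the system tends to escape (at least one particle moves a macroscopic distance away from $\partial D$) before infinitely many jumps accumulate.

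The heart of the argument is the coupling with the excursion process. In a boundary chart, the domain $D$ contains a cone $K_{p,d}$ with $p\in(0,2)$ precisely when $L<\frac1{\sqrt{d-1}}$ (using $p<2 \iff \cot\theta_{p,d}<\frac1{\sqrt{d-1}}$ from Section 2); the margin $c-L$ translates into how small $2-p$ can be made, and this is where the dependence $c(N,d)\uparrow \frac1{\sqrt{d-1}}$ as $N\to\infty$ enters. When one particle, say $X^j$, is deep inside such a cone near a boundary point and gets killed, it is reborn at the location $X^k_{\tau_k}$ of another particle; I would track the particle that stays closest to the vertex and dominate its successive excursions from near the vertex by the Poisson system of Brownian excursions $\mathcal P$ with intensity $\lambda\times H^0$. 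Lemma \ref{lem:4} gives $H^0(R>t)=ct^{-p/2}$, Lemma \ref{lem:5} gives $\sum_{t\le s}R_t<\infty$ a.s.\ for $p\in(0,2)$, and Lemma \ref{lem:stable} identifies $\sigma$ as a stable subordinator of index $p/2<1$. The key structural fact is that a stable subordinator of index $<1$, although it has infinitely many jumps in any time interval, has jumps whose sizes are not summable-to-zero in a way that would trap the process: with probability one there are arbitrarily large excursions in $(0,s]$, so the dominated particle keeps making macroscopic excursions away from the vertex, and the real clock $\sigma_t$ does not blow up at a finite value while infinitely many jumps occur. Translated back to $\X_t$, this says: either the number of jumps in finite real time is finite, or some particle repeatedly pushes a fixed distance away from $\partial D$, which (via the strong Markov property and a Borel--Cantelli argument over successive macroscopic ``excursion'' epochs) contradicts $\tau_\infty<\infty$.

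To make the comparison rigorous I would proceed in steps: (1) fix a reference scale $\rho$ and, using the Lipschitz charts, show that whenever $\dist(\X_t,\text{bad set})<\rho2^{-5}$ there is a boundary point and cone $K_{p,d}\subset D$ with one particle on the cone's axis, exactly as in the proof of Theorem \ref{thm:main}; (2) run the particle system until either it exits the $\rho$-neighborhood of the bad set (``success'') or accumulates a prescribed number of jumps; (3) couple the closest-to-vertex particle's trajectory with the excursion process $Z$ so that an excursion of $Z$ of length $\ge \rho^2$ forces ``success'' within that block; (4) use Lemma \ref{lem:4}/\ref{lem:5}/\ref{lem:stable} to show the coupled $Z$ has a positive-probability macroscopic excursion in any block, uniformly in the starting configuration; (5) conclude by a conditional Borel--Cantelli / Lévy--system argument that infinitely many jumps cannot accumulate in finite time. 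The dependence of $c(N,d)$ on $N$ comes in at step (3): with $N$ particles, the rebirth mechanism gives more ``room'' — one can afford a shallower cone (larger $p$, hence $L$ closer to $\frac1{\sqrt{d-1}}$) because the coupling only needs to control the single extremal particle while the others supply the rebirth targets. The main obstacle I expect is step (3), the coupling itself: the particle that is ``closest to a vertex'' changes identity over time, the cone and its orientation change from one boundary visit to the next, and at a jump the killed particle is teleported to another particle's position rather than reflected — so one must build the coupling in a way that is robust to re-initialization and to the changing geometry, and verify that the excursion process $Z$ genuinely dominates the relevant one-dimensional functional (distance to the current vertex) of $\X_t$ throughout a block. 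Getting a clean, configuration-uniform lower bound out of this — in particular handling the case where two or more particles are simultaneously near (possibly different) boundary points — is the delicate part; the excursion-theory input (Lemmas \ref{lem:4}--\ref{lem:stable}) and the boundary Harnack estimate of Lemma \ref{lem:main} are exactly what make the uniform bound possible.
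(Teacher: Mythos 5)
You have the right ingredients from Sections 2 and 4 (the cone $K_{p,d}$ inside a Lipschitz chart, the excursion process $Z$, Lemmas \ref{lem:4}--\ref{lem:stable}, and the equivalence $p<2\iff\cot\theta_{p,d}<\tfrac1{\sqrt{d-1}}$), but the central idea of the proof is missing, and your substitute for it would not work. The point is not to show that a single ``extremal'' particle keeps making macroscopic excursions so that jumps cannot accumulate; the point is that $\tau_\infty<\infty$ forces \emph{all $N$ particles to be on $\partial D$ at the same instant}, and one rules this out by a dimension/intersection argument. Concretely, the paper constructs, for \emph{each} $j$, an auxiliary process $Y^j$ built from independent copies of $Z$ (driven by Brownian motions independent across $j$) such that $\set{t:X^j_t\in\partial D}\subset A_j:=\set{t:Y^j_t\in\partial D}$, with $Y^1,\dots,Y^N$ \emph{independent} and each $A_j$, on suitable random intervals, distributed as the range of a stable subordinator of index $p/2$ (Lemma \ref{lem:stable}). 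Hawkes' theorem (ranges of independent stable subordinators of indices $\alpha,\beta$ intersect iff $\alpha+\beta>1$), applied inductively, shows $A_1\cap\dots\cap A_N=\emptyset$ a.s.\ precisely when $N\tfrac p2-(N-1)<0$, i.e.\ $p<2-\tfrac2N$. This is the only place the number of particles enters, and it is exactly what produces $c(N,d)=\cot\theta_{2-2/N,\,d}$, its monotonicity, and the limit \eqref{eq:limit}.

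Your scheme has no mechanism that could yield the threshold $2-\tfrac2N$: tracking one ``closest-to-vertex'' particle and invoking Borel--Cantelli over its macroscopic excursions ignores the joint behavior of the other $N-1$ particles, which is where the obstruction to simultaneous boundary visits actually lives; your heuristic that ``more particles give more room in the coupling'' is not substantiated and does not quantify into the stated constant. Moreover, the remark that the subordinator clock ``does not blow up'' is beside the point --- Lemma \ref{lem:5} is only needed to make $Z$ well defined; non-extinction comes from the \emph{emptiness of the intersection of the $N$ independent subordinator ranges}, not from any escape-before-accumulation renewal argument. Finally, the coupling you flag as the delicate step is indeed delicate, but its correct form is per particle (each $X^j$ is shadowed from below by its own $Y^j$, with the careful bookkeeping of independent driving noises $\wt W^{j,k,n}$ and $\wt Z^{j,k,m,n}$ that makes the $Y^j$ independent), not a single coupling for whichever particle happens to be nearest a vertex --- without that independence across $j$ the Hawkes step, and hence the whole proof, is unavailable.
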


\begin{proof}
First note that $\tau_\infty$ is finite if and only if all the
processes $X_t^1,\dotsc,X_t^N$ hit $\partial D$ at the same
time, so we need to prove that this is impossible. The idea of
the proof is to construct processes $Y_t^1,\dotsc,Y_t^N$ which
are easy to analyze, with values in $\ol D$ and such that for
every $1\leq j\leq N$,
\begin{equation}\label{eq:property}
  \set{t:X_t^j\in\partial D}
  \subset\set{t:Y_t^j\in\partial D}\overset{\text{df}}{=}A_j.
\end{equation}
Then we will prove that that $A_1\cap \dotsc \cap A_N=
\emptyset$, a.s.

Recall the definition of $\theta_{p,d}$ and $K_{p,d}$. Let $p'
= 2 - 2/N$ and
\begin{equation*}
  c(N,d)=\cot\theta_{p',d}.
\end{equation*}

Define
\begin{equation*}
  D_r=\set{x\in D:\dist(x,\partial D)\geq r}.
\end{equation*}
Since $D$ is Lipschitz, there exists a small $r>0$ for which
the following is true. For every $x\in \ol{D\setminus D_r}$
there exist an orthonormal coordinate system $CS_x$,
$y_x\in\partial D$ and a Lipschitz function $F_x:\R^{d-1}\to\R$
such that $y_x$ is the origin of $CS_x$ and
\begin{equation*}
  D\cap B(y_x,r)\subset
  \set{y\;{\rm  in  }\;CS_x:y^d>F_x(\wt y)} \cap B(y_x,r).
\end{equation*}
Moreover, since $L<c(N,d)$, we can choose $y_x$ and $CS_x$ so
that we can find a cone $ K_x$ with vertex $y_x$ and axis
passing through $x$ which can be described in $CS_x$ as
$K_{p,d}$ with $p<p'$, and such that
\begin{equation}\label{eq:stozki}
  K_{p',d}\cap B(y_x,r)\subset K_{p,d}\cap B(y_x,r)\subset D\cap B(y_x,r).
\end{equation}

Next we will present a very special construction of the process
$\X_t$, based on a family of independent Brownian motions. We
need this construction to show independence of processes $Y^1,
Y^2, \dots, Y^N$, to be constructed in a subsequent step.

We fix $j\in\set{1,\dotsc,N}$ and let $\tau_k^j$ denote the
time of the $k$-th jump of $X_t^j$. We represent the evolution
of $X^j$ on the interval $[0,\tau_1^j)$ as follows. We start
with a family of independent Brownian motions $\wt W^0, \wt
W^1, \wt W^2,\dotsc$ in $\R^d$ starting from 0. Suppose that
$X^j_0= x_0\in D\setminus D_{r/2}$. The argument needs only
minor modifications if $x_0\in D_{r/2}$. Let $W^0= \wt W^0 +
x_0$ and consider the cone $ K_{x_0}$ defined as above. Let
\begin{align*}
  \sigma'_1&=\inf\set{t>0:W^0_t\in\partial K_{x_0}},\\
  \sigma''_1&=\sigma'_1\wedge\inf\set{t>0:W_t^0 \notin D_r},\\
  \sigma_1&=
  \begin{cases}
    \sigma'_1,&\text{if $\sigma''_1=\sigma'_1$,}\\
    \inf\set{t>\sigma''_1:W_t^0\not\in D_{r/2}},&\text{if $\sigma''_1<\sigma'_1$.}
  \end{cases}
\end{align*}
Inductively, for $n\geq 1$, given $x_n=W^{n-1}_{\sigma_n}$ we
define the cone $ K_{x_n}$, let $W^n = \wt W^n + x_n$, and
\begin{align*}
  \sigma'_{n+1}&=\inf\set{t>0:W^n_t\in\partial K_{x_n}},\\
  \sigma''_{n+1}&=\sigma'_{n+1}\wedge\inf\set{t>0:W_t^n\notin D_r},\\
  \sigma_{n+1}&=
  \begin{cases}
    \sigma'_{n+1},&\text{if $\sigma''_{n+1}=\sigma'_{n+1}$,}\\
    \inf\set{t>\sigma''_{n+1}:W_t^n\not\in D_r},&\text{if $\sigma''_{n+1}<\sigma'_{n+1}$.}
  \end{cases}
\end{align*}
Then we define $T_0=0$,
\begin{equation*}
  T_n=\tau_1^j\wedge\sum_{k=1}^n\sigma_k,\quad n\geq 1,
\end{equation*}
and
\begin{equation*}
  X_t^j=W_t^k,\quad\text{for $T_{k-1}<t\leq T_k$},\quad k=0,1,2,\dotsc.
\end{equation*}
This procedure represents $X^j$ on the interval $[0,
\tau_1^j)$. Strictly speaking, $\tau^j_1$ is defined in terms
of $\wt W^0, \wt W^1, \wt W^2,\dotsc$.

The complete construction of $\X_t$ on the interval $[0,
\tau_\infty)$ requires that we start with a family $\{\wt
W^{j,k,n}\}_{1\leq j \leq N, k\geq 0, n \geq 0}$ of independent
Brownian motions. For fixed $j$ and $k$, the subfamily $\{\wt
W^{j,k,n}\}_{ n \geq 0}$ is used to construct $X^j$ on the
interval $[\tau_k^j,\tau_{k+1}^j)$, according to the recipe
described above. The whole procedure is straightforward and
elementary but tedious to describe so we leave the details to
the reader.

Let $\{\wt Z^{j,k,m,n}\}_{1\leq j \leq N, k\geq 0, m \geq 0,
n\geq 0}$ be a family of independent copies of the process $Z$
defined in \eqref{eq:a31-1}, independent of $\{\wt
W^{j,k,n}\}_{1\leq j \leq N, k\geq 0, n \geq 0}$. We will
present a construction of $Y^j$ on the interval $[T_0, T_1)
\subset [0, \tau^j_1)$, based on $X^j$ and $\{\wt
Z^{j,0,0,n}\}_{n \geq 0}$. For any fixed $j,k$ and $m$, we can
construct $Y^j$ on $[T_m, T_{m+1}) \subset
[\tau_k^j,\tau_{k+1}^j)$ using $X^j$ and $\{\wt Z^{j,k,m,n}\}
_{n \geq 0}$ in an analogous way. (Strictly speaking, we should
not speak about $[T_m, T_{m+1}) \subset
[\tau_k^j,\tau_{k+1}^j)$ but about a subinterval of
$[\tau_k^j,\tau_{k+1}^j)$ constructed in a way analogous to
$[T_m, T_{m+1})$.)

Let $\T_n$ be the isometry that maps $K_{p,d}$ onto $K_{x_n}$
and let $Z^n = \T(\wt Z^{j,0,0,n})$. We introduce a moving cone
$C^n_t$ with the vertex $X^j_t$ and the axis parallel to the
axis of $ K_{x_n}$, but directed downwards, i.e., we put
$C_*^n(x)=x- K_{x_n}$ and $C^n_t=C^n_*(X_t^j)$, $T_n\leq t <
T_{n+1}$. Let
\begin{gather*}
  S_1=\sigma''_1\wedge\inf\set{t>0:\Z{1}_t\in\partial C^0_t},\\
  Y^{(1)}_t=X_t-X(S_1)+\Z{1}(S_1),\quad t\geq S_1,\\
  R_1=\sigma''_1\wedge\inf\set{t>S_1:Y^{(1)}_t\in\partial
  K_{x_0}}.
\end{gather*}
By definition, $S_1\leq T_1$. Then we define
\begin{equation*}
  Y^j_t=
  \begin{cases}
    \Z{1}_t,&\text{for $t\in[0,S_1)$,}\\
    Y^{(1)}_t,&\text{for $t\in[S_1,R_1)$}.
  \end{cases}
\end{equation*}
For $n\geq 2$, we define
\begin{gather*}
  S_n=\sigma''_1\wedge\inf\set{t>R_{n-1}:\Z{n}\left( t-R_{n-1} \right)
  \in\partial C^{n-1}_t},\\
  Y^{(n)}_t=X_t-X(S_n)+\Z{n}\left( S_n-R_{n-1} \right),\quad t\geq S_n,\\
  R_n=\sigma''_1\wedge\inf\set{t>S_n:Y^{(n)}_t\in\partial\bar
  K_{x_0}},
\end{gather*}
and
\begin{equation*}
  Y^j_t=
  \begin{cases}
    \Z{n}\left( t-R_{n-1} \right),&\text{for $t\in[R_{n-1},S_n)$,}\\
    Y^{(n)}_t,&\text{for $t\in[S_n,R_n)$}.
  \end{cases}
\end{equation*}
We continue this process until the time $\sigma''_1$ and then we put
$Y_t^j=Y^j_{\sigma''_1}$ for $t\in[\sigma''_1,\sigma_1)$.

By construction, processes $X^1,\dotsc,X^N$ and
$Y^1,\dotsc,Y^N$ satisfy \eqref{eq:property}. Moreover,
independence of all processes in the family $\{\wt
Z^{j,k,m,n}\}_{1\leq j \leq N, k\geq 0, m \geq 0, n\geq 0} \cup
\{\wt W^{j,k,n}\}_{1\leq j \leq N, k\geq 0, n \geq 0}$ implies
that processes $Y^1,\dotsc,Y^N$ are independent. It remains to
prove that, a.s.,
\begin{equation*}
  A_1\cap\dotsc\cap A_N=\emptyset.
\end{equation*}

Recall that $A_j = \set{t\geq 0: Y^j \in \prt D}$. The
construction of $Y^j$ from independent pieces of processes
analogous to $Z$ suggests that $A_j$ is the range of a stable
subordinator, because of Lemma \ref{lem:stable}. The matter is
slightly complicated by the fact that $Y_t^j=Y^j_{\sigma''_1}$
for $t\in[\sigma''_1,\sigma_1)$, and similarly for other
analogous intervals. To deal with this problem, we introduce
the following sequence of stopping times, $U_0 = 0$,
\begin{align*}
U^*_k & = \inf\{ t\geq U_k:
\max_{1\leq j \leq N} \dist(X^j_t , \prt D) \geq r\},
\qquad k \geq 0, \\
U_k & = \inf\{ t\geq U^*_{k-1}:
\max_{1\leq j \leq N} \dist(X^j_t , \prt D) \leq r/2\},
\qquad k \geq 1.
\end{align*}
On each interval $(U_k, U^*_k]$, sets $A_1, \dots, A_N$ are
independent and each one has the same distribution as the range
of a stable subordinator with index $p/2$. It will suffice to
prove that for each fixed $k$, a.s.,
\begin{equation}\label{eq:empty}
(U_k, U^*_k] \cap  A_1\cap\dotsc\cap A_N=\emptyset.
\end{equation}
We use the following result of Hawkes \cite{hawkes77}: The
ranges of two independent stable subordinators with indices
$\alpha$ and $\beta$ intersect if and only if $\alpha+\beta>1$,
in which case the intersection is stochastically equivalent to
the range of a stable subordinator of index $\alpha+\beta-1$.
Therefore, by induction, \eqref{eq:empty} holds if and only if
$\frac{Np}{2}-N+1<0$. This condition holds since $p< p' =
2-\frac{2}{N}$.
\end{proof}

\begin{remark}\label{rem:1}
Let $D\subset\R^d$ be a bounded Lipschitz domain with the
Lipschitz constant $L<\frac{1}{\sqrt{d-1}}$. Then by
\eqref{eq:limit} we see that there exists $N_0$ so large that
$L<c(N,d)$ for all $N\geq N_0$. In consequence, the
Fleming-Viot-type particle process $\X_t$ in $D$ is well
defined for all $t\geq 0$ provided it consists of $N$ particles
with $N\geq N_0$.
\end{remark}

\begin{example}\label{ex:sep2}
The proof of Theorem 1.1 in \cite{burdzymarch00} contains an error.
Formula (2.1) in \cite{burdzymarch00} does not follow ``by
induction'' from the previous statement. We will show that the error
is irreparable in the following sense. The proof of Theorem 1.1 in
\cite{burdzymarch00} is based only on two properties of Brownian
motion--- the strong Markov property and the fact the the hitting
time distribution of a compact set has no atoms (assuming that the
starting point lies outside the set). Hence, if some version of that
argument were true, it would apply to almost all non-trivial examples
of Markov processes with continuous time, and in particular to all
diffusions.  However we may find a diffusion for which the analogue
of Theorem 1.1 in \cite{burdzymarch00} is false. Let $X_t$ be the
diffusion  on $\halfint{0,\infty}$,  started at $X_0 = 1$ and
satisfying the SDE
\begin{equation*}
dX_t = dW_t - \frac 5{2X_t}dt.
\end{equation*}
We make $0$ absorbing so that it can play the role of the boundary for the domain $D= \br{0,\infty}$. Notice that although $X_t$ is not a Bessel process, as we have reversed the drift term, it scales in the same way. That is, for $\alpha>0$, $\alpha X_{t\alpha^{-2} }$ is a diffusion satisfying the same SDE, but started at $\alpha$. Let $\Y^i_t = \br{Y^{i,1}_t,Y^{i,2}_t},\, i=1\dots\infty$, be a double sequence of independent copies of $X_t$, and set
\begin{align*}
\sigma_i &= \inf\setof{t>0}{Y^{i,1}_t \wedge Y^{i,2}_t = 0}, \\
\alpha_i &= Y^{i,1}_{\sigma_i} \vee Y^{i,2}_{\sigma_i}.
\end{align*}
Now, construct a two-particle Fleming-Viot type process $\X_t =
\br{X^1_t, X^2_t}$ as follows. First let $\tau_1=\sigma_1$ and set
$\X_t = \Y^1_t$ for $t\in\halfint{0,\tau_1}$. At $\tau_1$ one of the
particles hits the boundary and jumps to $\xi_1 = \alpha_1$. To
continue the process we use the scaling property of $\Y_t$ and set
$\X_t = \xi_1\Y^2_{\br{t-\tau_1}\xi_1^{-2}}$ for
$t\in\halfint{\tau_1,\tau_2}$ where $\tau_2 = \tau_1 +
{\xi_1}^2\sigma_2$. At $\tau_2$ a second particle hits the boundary
and jumps, this time to $\xi_2 = \alpha_2\xi_1$, and we continue the
process in the same way by setting
\begin{align*}
\xi_i &=\prod_{j=1}^i \alpha_j, \qquad
\tau_i = \sum_{j=1}^{i} {\xi_{j-1}}^2 \sigma_{j}, \\
\X_t &= \xi_i \Y^i_{\br{t-\tau_i}\xi_i^{-2}} ,
\qquad \text{for } t\in\halfint{\tau_i,\tau_{i+1}}.
\end{align*}

Then $\X_t$ evolves as two independent copies of $X_t$ with
Fleming-Viot type jumps when a particle hits the boundary. The
process $\X_t$ is well defined up until $\tau_\infty$ and if the
analogue of \cite[Theorem 1.1]{burdzymarch00} were to hold for this
process we would have $\tau_\infty=\infty$ almost surely. In fact the
opposite is true. We will show now that $\E\tau_\infty<\infty$ and
hence $\tau_\infty<\infty$ almost surely. To do this it will be
sufficient to show $\E\br{{\alpha_1}^2}<1$ and $\E{\sigma_1}<\infty$.
Let $f\br{x,y} = x^4 +y^4 - x^2y^2$ and notice $f(x,x) = f(x,0) =
f(0,x) = x^4$.  We may check using Ito's formula that
$f\br{Y^{i,1}_{t\wedge\sigma_i}, Y^{i,2}_{t\wedge\sigma_i}}$ is a
positive local martingale and hence a supermartingale.  By the
optional stopping theorem
\begin{align*}
\E\br{{\alpha_1}^4} = \E{f\br{Y^{1,1}_{\sigma_1}, Y^{1,2}_{\sigma_1}}}
\leq\E{f\br{Y^{1,1}_{0}, Y^{1,2}_{0}}  }
= 1.
\end{align*}
Furthermore, $\alpha_1$ is not almost surely constant and so by
Jensen's inequality
\begin{equation*}
\E\br{{\alpha_1}^2} < \sqrt{\E\br{{\alpha_1}^4}} = 1.
\end{equation*}

We may use Ito's formula again to show that ${X_t}^2 +4t$ is a local
martingale and so by the optional stopping theorem again we have that
$\E\br{\sigma_1}\leq\frac 14$.

By independence of the $\Y^i$ processes we have that
$\E\br{{\xi_i}^2} = \E\br{{\alpha_1}^2}^i$ and so
\begin{align*}
\E{\tau_\infty} = \sum_{j=1}^{\infty} \E\br{{\xi_{j-1}}^2 \sigma_{j} }
\leq \frac 14 \sum_{j=0}^{\infty} \E\br{{\alpha_1}^2}^{j}  <\infty.
\end{align*}

\end{example}

\section{Hitting probabilities of compact sets}

This section is devoted to a technical estimate needed in the
proof of Theorem \ref{thm:A14.2}. Recall definitions of $D_r$
and $\X_t=(X_t^1,\dotsc,X_t^N)$.

\begin{lemma}\label{lem:estimate}
Fix $N\geq 2$ and let $D\subset\R^d$ be a bounded Lipschitz
domain with the Lipschitz constant $L<c(N,d)$.
  \begin{enumerate}[(i)]
    \item For any fixed $k\in\set{1,\dotsc,N}$, and for
        every $r>0$ such that $\inter D_r \ne \emptyset$,
        there exist $c>0$ and $t>0$ such that for all
        $\bx\in D^N$,
      \begin{equation*}
    \P[\bx]\left( X^k_{t}\in D_r \right)\geq c.
      \end{equation*}
    \item For every $r>0$ such that $\inter D_r \ne \emptyset$,
        there exist $c>0$  and $t>0$ such that for all
        $\mathbf{x}\in D^N$,
      \begin{equation*}
    \P[\mathbf{x}]\left( \X_{t}\in D_r^N \right)\geq c.
      \end{equation*}
  \end{enumerate}
\end{lemma}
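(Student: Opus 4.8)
The plan is to prove part (i) first by a direct coupling/support argument, and then deduce part (ii) by iterating part (i) over the $N$ coordinates. The key point in part (i) is that, although the Fleming--Viot dynamics are complicated by the jumps, a single particle $X^k$ behaves like a Brownian motion between jumps, and Theorem \ref{thm:exist} guarantees that only finitely many jumps occur in any finite time interval. First I would fix $r>0$ with $\inter D_r\neq\emptyset$ and choose a ball $B(z_0,2\varrho)\subset D_r$. Since $D$ is a bounded Lipschitz (in particular connected) domain, for \emph{any} starting configuration $\bx\in D^N$ there is a polygonal path inside $D$ from $x^k$ to $z_0$ staying at distance at least some $\delta_0>0$ from $\partial D$ along all but its initial segment; the length and the tube-width of such a path can be bounded uniformly in $\bx$ because $\ol D$ is compact. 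Over a fixed time horizon $[0,t_0]$, the event that the driving Brownian motion $\wt W$ for $X^k$ (in the construction of Section \ref{FVproc}) follows this tube and ends inside $B(z_0,\varrho)$ \emph{before} $X^k$'s first jump, and that moreover $X^k$ does not jump on $[0,t_0]$, has probability bounded below by a constant $c>0$ depending only on $D,N,r,t_0$. On that event $X^k_{t_0}\in D_r$, which gives (i). To make ``$X^k$ does not jump'' quantitative one controls the probability that \emph{no} particle hits $\partial D$ on $[0,t_0]$: by the construction each $X^j$ between its own jumps is a Brownian motion, and one can force all $N$ particles into $D_{\delta_0/2}$ quickly and keep them there, again a positive-probability event by the same tube argument applied simultaneously to all coordinates.

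A cleaner way to organize part (i), avoiding the bookkeeping of jump times, is to argue by contradiction using the Markov property: if the claim failed, then for every $n$ there would be $\bx_n\in D^N$ with $\P[\bx_n](X^k_{1/n}\in D_r)\to 0$ along some subsequence; but $\P[\bx](X^k_t\in D_r)$ is, for fixed $t>0$, bounded below uniformly over $\bx$ in any compact subset of $D^N$ by the standard support theorem for the pre-jump Brownian pieces, and near $\partial D^N$ one uses that a particle starting close to $\partial D$ is, with probability bounded below, the one that jumps almost immediately to the location of another particle which itself has been pushed deep inside $D$. I would present the explicit tube construction rather than the compactness argument, since it also yields the uniform $t$ and $c$ directly.

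Part (ii) follows from part (i) by the strong Markov property and $N$-fold iteration. Apply (i) with the smaller radius $r$ and some $t_1$ to particle $k=1$: with probability at least $c_1$ we have $X^1_{t_1}\in D_r$. Restarting at time $t_1$, apply (i) to particle $k=2$ to get $X^2_{t_1+t_2}\in D_r$ with conditional probability at least $c_2$; however, one must also ensure that particle $1$ has \emph{remained} in $D_r$ (or at least in $D_{r'}$ for some $r'\le r$) over $[t_1,t_1+t_2]$. This is handled by running the second step with target radius $r$ but on the event that additionally no particle hits $\partial D$ during $[t_1,t_1+t_2]$ and $X^1$ stays in $D_{r/2}$, say — again a positive-probability event by the tube argument, now requiring $X^1$'s tube to be the constant path at its current location. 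Iterating, after $N$ steps and total time $t=t_1+\dots+t_N$ we obtain $\X_t\in D_r^N$ with probability at least $\prod_{i=1}^N c_i>0$, where each $c_i$ and $t_i$ depends only on $D,N,r$; this is the desired bound with $t$ and $c=\prod c_i$ uniform over $\bx\in D^N$.

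The main obstacle is the interaction between particles through the Fleming--Viot jumps: one cannot simply treat the $N$ coordinates as independent Brownian motions, so every positive-probability event invoked above must be phrased as an event on the \emph{driving} independent Brownian motions $\{\wt W^{j,k,n}\}$ of the construction in Section \ref{FVproc}, on which one simultaneously (a) routes all particles into the interior via disjoint tubes and (b) prevents any boundary hit for the required duration. The uniformity over the starting configuration $\bx$, including configurations with several particles extremely close to $\partial D$, is the delicate point; it is resolved by the observation that a particle near $\partial D$ has probability bounded below of being precisely the one that jumps, almost instantaneously, onto a companion that has already been steered into $D_r$, after which it too lies in $D_r$.
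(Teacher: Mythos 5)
There is a genuine gap, and it sits exactly at the point you yourself flag as ``the delicate point.'' Your core positive-probability event for part (i) is that $X^k$ traverses a tube from $x^k$ to deep inside $D$ \emph{before its first jump}, with probability bounded below ``uniformly in $\bx$.'' That uniformity is false: if $x^k$ starts at distance $\eta$ from $\partial D$, the probability that the Brownian particle reaches $D_r$ before hitting $\partial D$ decays to $0$ as $\eta\to 0$ (a harmonic-measure estimate in a Lipschitz domain), so no tube/support-theorem argument can give a constant independent of the starting configuration. Your proposed repair --- a particle near $\partial D$ jumps ``almost instantaneously'' onto a companion ``that has already been steered into $D_r$'' --- is circular in the only regime that matters: when \emph{all} $N$ particles start arbitrarily close to $\partial D$, there is no such companion, and the probability that any fixed particle gets deep inside before its first boundary hit is again not bounded below; after a jump the particle may simply land on another particle that is also near the boundary, and nothing has been gained. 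The same objection applies to your compactness/contradiction variant.

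This is precisely why the paper's proof of (i) does not attempt a single-excursion lower bound at all. It uses Theorem \ref{thm:main} (the new boundary Harnack principle, $\P[x](T_{D_r}<T_{\partial D})\geq c_0\,\E[x]T_{\partial D}$) to quantify a trade-off valid at \emph{every} jump: each excursion of $X^k$ between jumps succeeds in reaching $D_r$ with probability at least $c_0$ times its expected duration, no matter how close to $\partial D$ it restarts. Encoding this in the submartingale $M_n=\tfrac1{c_0}\I_{\{Y(T_n)\in D_r\}}-T_n$ and applying optional stopping yields a lower bound on $\P[\bx](T^{X^k}_{D_r}\leq t_1)$ that is uniform in $\bx\in D^N$; your proposal never invokes Theorem \ref{thm:main}, which is the whole reason Section \ref{sec:bhp} and the Lipschitz-constant hypothesis are in the paper. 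For part (ii), note also that your coordinate-by-coordinate iteration conditions simultaneously on ``apply (i) to particle $2$'' and ``particle $1$ stays in $D_{r/2}$ and no particle hits $\partial D$''; the first event needs jumps to be allowed (for uniformity) while the second forbids them, and conditioning on the extra requirements changes the law under which (i) was proved. The paper avoids this by steering only one particle deep inside and then using the jump mechanism itself: each remaining particle hits $\partial D$ within a fixed time with uniformly positive probability, chooses particle $1$ as its jump target with probability $1/(N-1)$, and then stays nearby --- an event whose probability is bounded below by an explicit product.
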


\begin{proof} (i)
Fix $r>0$ such that $\inter D_r \ne \emptyset$. Recall that
notation such as $T_{D_r}$, $T_{\prt D}$, etc. refers to
hitting times by Brownian motion. By Theorem \ref{thm:main}
there exists $c_0=c_0(r)$ such that for all $x\in D$,
  \begin{equation}\label{eq:a31-3}
    \P[x]\left( T_{D_r}<T_{\partial D} \right)\geq c_0 \E[x]T_{\partial D}.
  \end{equation}
Fix $k$ and let $T^{X^k}_{D_r} = \inf\set{t \geq 0: X^k_t \in
D_r}$, and
\begin{equation*}
    Y_t=X^k(t\wedge T^{X^k}_{D_r}).
  \end{equation*}
Define $T_0=0$ and
  \begin{equation*}
    T_{n+1}=\inf \left\{ t>T_n:\lim_{s\to t^-}Y_s\in\partial D \right\}
    \land T^{X^k}_{D_r}.
  \end{equation*}
Let $M_0=0$ and
  \begin{equation*}
    M_n=\frac{1}{c_0}\I_{\set{Y(T_n)\in D_r}}-T_n,\quad n\geq 1,
  \end{equation*}
  and
  \begin{equation*}
    \F_n=\sigma(\X_t, t\leq T_n).
  \end{equation*}
It is easy to see that $E T_n < \infty$ so $E|M_n| < \infty$.
For $\bx=(x_1, x_2, \dots, x_N)\in D^N$ with $x_k\not\in D_r$,
  \begin{align*}
      &\E[\x]\left( M_{n+1}-M_n\mid\F_n \right)\\
&= \frac{1}{c_0} \E[\x]\left(\I_{\set{Y(T_{n+1})\in D_r}}
(\I_{\set{Y(T_n)\notin D_r}} + \I_{\set{Y(T_n)\in D_r}})
- \I_{\set{Y(T_n)\in D_r}} \mid \F_n \right)
-\E[\x]\left( T_{n+1}-T_n\mid \F_n \right)\\
&= \frac{1}{c_0} \E[\x]\left(\I_{\set{Y(T_{n+1})\in D_r}}
\I_{\set{Y(T_n)\notin D_r}} + \I_{\set{Y(T_n)\in D_r}}
- \I_{\set{Y(T_n)\in D_r}} \mid \F_n \right)
-\E[\x]\left( T_{n+1}-T_n\mid \F_n \right)\\
      &=\frac{1}{c_0}\I_{\set{Y(T_n)\not\in D_r}}\P[\x]\left(
      Y(T_{n+1})\in D_r\mid \F_n\right)
      -\E[\x]\left( T_{n+1}-T_n\mid \F_n \right).
  \end{align*}
We have on the event $\{Y(T_n)\not\in D_r\}$,
  \begin{equation*}
    \E[\x]\left( M_{n+1}-M_n\mid\F_n
    \right)
    \geq \frac{1}{c_0}\P[X^k(T_n)]\left(T(D_r)<T_{\partial D}\right)
    -\E[X^k(T_n)]T_{\partial D}\geq 0,
  \end{equation*}
by \eqref{eq:a31-3}. On the event $\{Y(T_n)\in D_r\}$, we have
$T_{n+1}=T_n$, $Y_{T_{n+1}}\in D_r$, and so
  \begin{equation*}
    \E[\x](M_{n+1}-M_n\mid\F_n)=0.
  \end{equation*}
Combining the last two formulas, we conclude that $\set{M_n}$
is a submartingale with respect to $\set{\F_n}$.

Define
  \begin{equation*}
    S=\inf\set{j:T_j\geq 1} \land \inf\set{j: Y_{T_j} \in D_r}.
  \end{equation*}
Fix an $\bx\in D^N$ and consider two cases. First, we may have
\begin{align*}
    \P[\x](S = \inf\set{j: Y_{T_j} \in D_r}) \geq 1/2.
\end{align*}
In this case,
\begin{align}\label{eq:a31-4}
    \P[\x](T^{X^k}_{D_r} \leq 1) \geq 1/2.
\end{align}

The second case is when
\begin{align*}
    \P[\x](S = \inf\set{j: Y_{T_j} \in D_r}) < 1/2.
\end{align*}
In this case, $\P[\x]( S\geq 1) \geq 1/2$, so $\E[\x] T_S \geq
1/2$. The submartingale $M_n$ is bounded above by $1/c_0$ so we
can apply the optional stopping theorem to obtain
  \begin{equation*}
    \E[\x]M_S\geq \E[\x]M_0=0.
  \end{equation*}
Hence
  \begin{equation}\label{eq:s2-2}
    \P[\x]\left( Y_{T_S}\in D_r\right)\geq c_0\E[\x]T_S\geq c_0/2.
  \end{equation}
We will show that for some $t_0$,
  \begin{equation}\label{eq:aux2}
    \P[\x]\left(T^{X^k}_{D_r}\leq t_0\right)\geq c_0/4.
  \end{equation}
If $T_S > s_0$ for some $s_0 >1$ then $X^k_t$ must not hit $D_r
\cup \prt D$ for $t\in(1, s_0)$. The probability of this event
is bounded above by the probability of the event that Brownian
motion starting from $X^k_1$ will not leave the ball $B(X^k_1,
2 \diam(D))$ for $s_0-1$ units of time. The last probability is
$c_1<1$, depending on $s_0 >1$, but not depending on $X^k_1$.
By the Markov property,
\begin{align*}
    \sup_{\bx\in D^N}\P[\x]\left( T_S>s_0 \right) \leq c_1 <  1.
\end{align*}
Applying the Markov property repeatedly at times $s_0, 2s_0, \dots$,
we obtain for any $\bx\in D^N$,
  \begin{equation*}
    \P[\x](T_S>ns_0)\leq c_1^n.
  \end{equation*}
We choose $n$ so large that $c_1^n \leq c_0/4$ and let
$t_0=ns_0$. Then for $\bx\in D^N$,
  \begin{equation}\label{eq:s2-1}
    \P[\x](T_S>t_0)\leq c_0/4.
  \end{equation}
We use \eqref{eq:s2-2} and \eqref{eq:s2-1} to see that
  \begin{align*}
      c_0/2 & \leq\P[\x]\left( Y_{T_S}\in D_r\right)\\
      &=\P[\x]\left( Y_{T_S}\in D_r,T_S>
      t_0\right) +\P[\x]\left( Y_{T_S}\in D_r,T_S\leq t_0\right)\\
      &\leq\P[\x](T_S>t_0) +\P[\x]\left(T^{X^k}_{D_r} \leq t_0\right)\\
      &\leq c_0 /4 +\P[\x]\left(T^{X^k}_{D_r} \leq t_0\right).
  \end{align*}
This implies \eqref{eq:aux2}. We combine the two cases, that
is, \eqref{eq:a31-4} and \eqref{eq:aux2}, to see that for some
$t_1 < \infty$ and $c_2$, for all $\bx\in D^N$,
  \begin{equation}\label{eq:s2-3}
    \P[\x]\left(T^{X^k}_{D_r}\leq t_1\right)\geq c_2.
  \end{equation}

Let $r_1$ be such that $0< r< r_1$ and $\inter D_{r_1} \ne
\emptyset$. Let $t_2$ and $c_3$ be such that \eqref{eq:s2-3} holds
with $r_1, t_2$ and $c_3$ in place of $r,t_1$ and $c_2$, i.e.,
  \begin{equation}\label{eq:s2-4}
    \P[\x]\left(T^{X^k}_{D_{r_1}}\leq t_2\right)\geq c_3.
  \end{equation}
Let $r_2=(r_1-r)/2$ and $p_1=\P[0]\left( T_{\prt B(0,r_2)}\geq
t_2 \right)>0$. By translation invariance of Brownian motion,
$p_1=\P[y]\left( T_{\prt B(y,r_2)}\geq t_2 \right)$ for every
$y$. If the process $X^k$ hits $D_{r_1}$ before time $t_2$ and
then stays in the ball $B(X^k(T^{X^k}_{D_{r_1}}), r_2)$ for at
least $t_2$ units of time then $X^k$ will be inside $D_r$ at
time $t_2$. By the strong Markov property applied at the
stopping time $T^{X^k}_{D_{r_1}}$, we obtain, using
\eqref{eq:s2-4}, for all $\bx\in D^N$,
\begin{align}\label{eq:s2-5}
\P[\x](X^k_{t_2}\in
D_{r}) \geq p_1 \P[\x]\left(T^{X^k}_{D_{r_1}}\leq t_2\right)\geq
p_1 c_3 >0.
\end{align}
This completes the proof of part (i) of the lemma.

(ii) Recall that $r>0$ is fixed and such that $\inter D_r \ne
\emptyset$. Let $r_3$ and $r_4$ be such that $0< r < r_3 < r_4$
and $\inter D_{r_4} \ne \emptyset$. Let $r_5 = \min(r_3 - r,
r_4-r_3)/2$. We choose $t_3$ and $c_4$ so that \eqref{eq:s2-5}
can be applied with $r_4$ in place of $r$,
\begin{align*}
\P[\x](X^k_{t_3}\in D_{r_4}) \geq c_4 >0.
\end{align*}
Let $p_2 = \inf_{y\in D} \P[y](T_{\prt D} \leq t_3) $ and note
that $p_2 > 0$. Let $p_3=\P[y]\left( T_{\prt B(y,r_5)}\geq 2t_3
\right)>0$ and note that $p_3$ does not depend on $y$.

Let $A$ be the intersection of the following events.

(a) The process $X^1$ is in $D_{r_4}$ at time $t_3$, and it
stays in $B(X^1_{t_3}, r_5)$ for all $t\in[t_3,3 t_3]$.

(b) For every $j=2, \dots, N$, the process $X^j$ jumps at a
time $s_j\in [t_3, 2t_3]$ to $X^1_{s_j}$, and then stays in the
ball $B(X^j_{s_j}, r_5) = B(X^1_{s_j}, r_5)$ for all
$t\in[s_j,s_j+2 t_3]$.

By the strong Markov property and the definition of the process
$\X$, the probability of $A$ is bounded below by $c_5 = c_4 p_3
(p_2 (1/(N-1)) p_3)^{N-1}$. If $A$ occurs then $\X_{3t_3}\in
D_{r}^N$. Hence, for every $\x\in D^N$,
  \begin{equation*}
    \P[\x]\left( \X_{3t_3}\in D_{r}^N \right)\geq c_5>0.
  \end{equation*}
  This proves part (ii) of the lemma.
\end{proof}

\section{Stationary distribution for the particle system}
\label{sec:station}

The two theorems proved in this section generalize the
analogous results in \cite{burdzymarch00}, where the proofs
were given only for domains satisfying the internal ball
condition.

\begin{theorem}\label{thm:A14.2}
Suppose that $D\subset\R^d$ is a bounded Lipschitz domain with
the Lipschitz constant $L<c(N,d)$, where $c(N,d)$ is as in
Theorem \ref{thm:exist}. Then there exists a unique stationary
probability distribution $\M^N$ for $\X_t$. The process $\X_t$
converges to its stationary distribution exponentially fast,
i.e., there exists $\lambda>0$ such that for every $A\subset
D^N$,
  \begin{equation}\label{eq:s2-7}
    \lim_{t\to\infty}\e^{\lambda t}
    \sup_{\x\in D^N}\left|\P[\mathbf x]\left( \X_t\in A
    \right)-\M^N(A)\right|=0.
  \end{equation}
\end{theorem}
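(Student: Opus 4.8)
The plan is to establish exponential ergodicity of $\X_t$ via a Doeblin-type minorization combined with a Lyapunov/drift argument, using Lemma~\ref{lem:estimate} as the engine. The process $\X_t$ is a well-defined strong Markov process on $D^N$ for all $t\geq 0$ by Theorem~\ref{thm:exist}, since $L<c(N,d)$. First I would fix $r>0$ small enough that $\inter D_r\neq\emptyset$ and $\ol{D_r}^N$ is "deep" inside $D^N$; by Lemma~\ref{lem:estimate}(ii) there exist $t_*>0$ and $c_*>0$ with $\P[\x](\X_{t_*}\in D_r^N)\geq c_*$ for \emph{all} $\x\in D^N$. This is already a uniform hitting estimate for the compact set $D_r^N$, which will serve as the "small set" for the chain sampled at times $0,t_*,2t_*,\dots$.

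The second step is to upgrade this to a genuine minorization condition: there should exist a probability measure $\nu$ on $D_r^N$ and $\eps>0$ such that $\P[\x](\X_{2t_*}\in\,\cdot\,)\geq \eps\,\nu(\cdot)$ for all $\x\in D^N$. To get this I would run the process for time $t_*$ to land in $D_r^N$ with probability $\geq c_*$, and then use that, started from any point of the compact set $D_r^N$ (which stays a fixed distance from $\prt D$, so on a short further time interval no particle is killed with probability bounded below), the law of $\X_{t_*}$ dominates a fixed multiple of some reference measure — e.g.\ one can couple with $N$ independent Brownian motions started in $D_r$, none of which hits $\prt D$ before time $t_*$, whose joint law at time $t_*$ has a density bounded below on a fixed ball inside $D^N$. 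Concatenating via the Markov property at time $t_*$ yields the claimed $\eps,\nu$. Standard Doeblin theory (or Meyn--Tweedie, Theorem 16.0.2, applied to the skeleton chain $\X_{n\cdot 2t_*}$) then gives a geometric rate: there exist $C<\infty$ and $\beta<1$ with
\begin{equation*}
  \sup_{\x\in D^N}\left| \P[\x](\X_{2nt_*}\in A) - \M^N(A)\right| \leq C\beta^n,
\end{equation*}
for all measurable $A\subset D^N$, where $\M^N$ is the unique stationary probability measure (existence and uniqueness both follow from the minorization).

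Third, I would pass from the skeleton chain to continuous time. For general $t$, write $t = 2nt_* + s$ with $s\in[0,2t_*)$ and use the Markov property at time $s$ together with the contraction estimate for the skeleton: for any $\x$,
\begin{equation*}
  \left|\P[\x](\X_t\in A)-\M^N(A)\right|
  = \left| \E[\x]\!\left[\P[\X_s](\X_{2nt_*}\in A)\right] - \M^N(A)\right|
  \leq C\beta^n,
\end{equation*}
using stationarity of $\M^N$ in the form $\M^N(A)=\int \P[y](\X_{2nt_*}\in A)\,\M^N(dy)$ and then $\beta^n = \beta^{(t-s)/(2t_*)}\leq \beta^{t/(2t_*)-1}$. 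Setting $\lambda = -\tfrac{1}{2t_*}\log\beta>0$ gives $\sup_\x|\P[\x](\X_t\in A)-\M^N(A)|\leq C'\e^{-\lambda t}$, which is \eqref{eq:s2-7}.

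The main obstacle is the second step — promoting the uniform hitting bound of Lemma~\ref{lem:estimate}(ii) into an honest minorization with a \emph{fixed} reference measure $\nu$ independent of the starting point. Lemma~\ref{lem:estimate} only gives that $\X_{t_*}\in D_r^N$ with probability $\geq c_*$; one must additionally argue that, conditionally, the landing distribution has a component bounded below by a fixed sub-probability measure. The cleanest route is probably not to condition but to append an extra deterministic time increment during which, starting from the compact deep set $D_r^N$, all $N$ particles diffuse freely (no boundary hit) — their joint transition density on $D^N$ is then bounded below on a fixed open ball $B\subset D_r^N$ by a positive constant times Lebesgue measure, uniformly over starting points in $D_r^N$ by a compactness/continuity argument. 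Care is needed because the Fleming--Viot jumps could in principle interfere, so one restricts to the sub-event "no particle hits $\prt D$ on the extra interval", whose probability is bounded below uniformly on $D_r^N$. Once this is in hand, everything else is the standard Doeblin machinery and the time-discretization bookkeeping above.
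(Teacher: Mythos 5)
Your proposal follows essentially the same route as the paper: Lemma~\ref{lem:estimate}(ii) gives the uniform hitting of $D_r^N$, the minorization comes from bounding the transition density of $\X$ from below by that of Brownian motion killed on exiting $D^N$ (which is exactly your ``no particle hits $\prt D$ on the extra interval'' coupling), the skeleton chain is handled by Meyn--Tweedie, and the passage to continuous time is the same semigroup/remainder argument. The only difference is cosmetic --- the paper states the density lower bound on all of $D_r^N \times D_r^N$ in one line rather than on a small ball, but this does not change the substance.
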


\begin{proof}
We have shown in Lemma \ref{lem:estimate} (ii) that for any
$r>0$, with probability higher than $p_0=p_0(r)>0$, the process
$\X_t$ can reach the compact set $D^N_{r}$ within $t_0>0$ units
of time. This and the strong Markov property applied at times
$2t_0,4t_0,6t_0,\dotsc$ show that the hitting time of $D^N_r$
is stochastically bounded by an exponential random variable
with the expectation independent of the starting point of
$\X_t$.  Since the transition densities $p^\X_t({\bf x}, {\bf
y})$ for $\X_t$ are bounded below by the densities for the
Brownian motion killed at the exit time from $D^N$, we see that
$p^\X_t({\bf x}, {\bf y})> c_1 >0$ for ${\bf x}, {\bf y} \in
D^N_{r}$.  Fix arbitrarily small $s>0$ and consider the
``skeleton'' $\{\X_{ns}\}_{n\geq 0}$. The properties listed in
this paragraph imply that the skeleton has a stationary
probability distribution and that it converges to that
distribution exponentially fast, i.e., \eqref{eq:s2-7} holds
for the skeleton, by Theorem 2.1 in \cite{downmeyn95} or
Theorem 16.0.2 (ii) and (vi) of \cite{meyntweedie}. See the
proof of Proposition 1.2 in \cite{traps} for an argument
showing how to pass from the the statement of uniform
ergodicity for the skeleton to the analogous statement for the
continuous process $t\to \X_t$. We sketch this argument here.
Take any $\eps>0$ and find $t_1= n_1 s$ such that
  \begin{equation}\label{eq:s2-8}
    \e^{\lambda t}
    \sup_{\x\in D^N}\left|\P[\mathbf x]\left( \X_t\in A
    \right)-\M^N(A)\right|\leq \eps
  \end{equation}
holds for $t \geq t_1$ of the form $t = n s$. Consider an
arbitrary $t_2 > t_1$, not necessarily of the form $ns$. Let
$m$ be the integer part of $t_2/s$ and let $u = t_2 - ms$. Note
that $m\geq n_1$. Since \eqref{eq:s2-8} holds for $t=ms$, the
semigroup property applied at time $u$ shows that
\eqref{eq:s2-8} holds also at time $t_2$.
\end{proof}

\begin{theorem}
Suppose that $D$ is a bounded Lipschitz domain with the
Lipschitz constant $L<\frac{1}{\sqrt{d-1}}$. For $N\geq N_0$
(see Remark \ref{rem:1}) let $\XX_\M^N$ be the stationary
empirical measure. Let $\varphi$ be the first eigenfunction for
Laplacian in $D$ with the Dirichlet boundary conditions,
normalized so that $\int_D\varphi=1$. Then the sequence of
random measures $\XX_\M^N$, $N\geq N_0$, converges as
$N\to\infty$ to the (non-random) measure with the density
$\varphi$, in the sense of weak convergence of random measures.
\end{theorem}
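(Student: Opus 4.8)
The plan is to establish the convergence of the stationary empirical measures $\XX_\M^N$ to the normalized first Dirichlet eigenfunction $\varphi$ by passing to the limit in a fixed-point (stationarity) equation. First I would recall the known formula, generalizing Theorem 4.1 of \cite{burdzymarch00}, that identifies the expected stationary empirical measure: if $\mu^N(dx) = \E[\XX_\M^N](dx)$ denotes the one-particle marginal of the stationary law $\M^N$ (averaged over particles by symmetry), then stationarity of $\X_t$ forces $\mu^N$ to satisfy a self-consistent equation of the form
\begin{equation*}
  \mu^N(dy) = \frac{\int_D G_D(x,y)\,\mu^N(dx)}{\int_D\int_D G_D(x,z)\,\mu^N(dx)\,dz}\;dy,
\end{equation*}
where $G_D$ is the Dirichlet Green function of $D$; here the numerator comes from the occupation measure of the killed Brownian motion and the denominator is a normalization coming from the branching (jump) mechanism, since at a branching event the reborn particle is placed at the current configuration. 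Equivalently, writing $G_D$ for the Green operator, $\mu^N$ is (up to normalization) a fixed point of the operator $\nu \mapsto G_D\nu / \|G_D\nu\|_1$. The key point is that any probability density $\psi$ solving $G_D\psi = c\,\psi$ for a constant $c$ is exactly an eigenfunction of $-\Delta$ with Dirichlet boundary conditions, and since $G_D$ is a positive compact operator, the Krein–Rutman/Perron–Frobenius theorem says the only \emph{positive} eigenfunction is the first one, $\varphi$, with $c = 1/\lambda_1$. So the deterministic limit, if it exists, is forced to be $\varphi$.

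Next I would control the fluctuations so as to upgrade convergence of $\E[\XX_\M^N]$ to convergence of the random measures $\XX_\M^N$ themselves. For a fixed test function $f\in C(\ol D)$, I would show $\mathrm{Var}\big(\langle \XX_\M^N, f\rangle\big) \to 0$ as $N\to\infty$. This is where one uses that under $\M^N$ the particles are ``asymptotically independent'': the two-particle correlation under the stationary law decays in $N$. The cleanest route is a propagation-of-chaos estimate for the stationary measure — bound the difference between the two-particle marginal of $\M^N$ and the product $\mu^N\otimes\mu^N$ by $O(1/N)$ — which together with boundedness of $f$ gives $\mathrm{Var}(\langle \XX_\M^N, f\rangle) = O(1/N)$. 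Combining with the first-moment convergence $\langle \mu^N, f\rangle \to \langle \varphi, f\rangle$ yields $\langle \XX_\M^N, f\rangle \to \langle\varphi, f\rangle$ in $L^2$, hence in probability, for each fixed $f$; a standard separability argument (testing against a countable dense family in $C(\ol D)$) then upgrades this to weak convergence of the random measures $\XX_\M^N$ to the deterministic measure $\varphi(x)\,dx$.

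There are two subsidiary points that need care. First, I must verify that $\E[\XX_\M^N]$ does genuinely satisfy the Green-function fixed-point equation: this requires an exchange of expectation and the (stationary) time-integral defining the empirical occupation measure, which is legitimate because $\sup_{\x} \E[\x] T_{\partial D} < \infty$ on the bounded domain $D$ and, crucially, because Theorem~\ref{thm:exist} (applicable since $N\geq N_0$, by Remark~\ref{rem:1}) guarantees $\tau_\infty = \infty$ a.s., so the process and its stationary law (Theorem~\ref{thm:A14.2}) are well defined for all time. Second, I should check tightness/compactness of the family $\{\mu^N\}$ in the weak topology on $\mathcal P(\ol D)$ — automatic since $\ol D$ is compact — and then argue that \emph{every} subsequential limit $\mu^\infty$ is a positive (or at least nonnegative and nonzero) weak solution of $G_D\mu^\infty = c\,\mu^\infty$; here one needs that mass does not escape to $\partial D$ in the limit, which again is where the non-extinction result and the Harnack-type control near the boundary enter. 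The main obstacle, I expect, is precisely this last step: establishing that the limiting measure is absolutely continuous with a \emph{strictly positive} density (so that Krein–Rutman forces it to be $\varphi$ rather than, say, a measure concentrated on $\partial D$ or a higher eigenfunction with sign changes). Once that regularity-and-positivity of the limit is in hand, the identification with $\varphi$ and the passage from first moments to the random-measure statement are routine.
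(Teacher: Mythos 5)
You have correctly identified the overall shape of the argument, but the step that constitutes the actual content of the paper's proof is left in your proposal as an acknowledged ``main obstacle'' rather than proved. The paper's proof has exactly two ingredients: (1) a uniform-in-$N$ estimate showing that under the stationary law $\M^N$ no mass concentrates near $\prt D$ --- for every $p_1>0$ there is $r>0$, independent of $N$, such that in stationarity $\P(X^j_t\notin D_r)\le p_1$, so the mean measures $\E\XX^N_\M$ are tight in the \emph{open} domain $D$, and hence (Lemma 3.2.7 of \cite{dawson92}) the random measures $\XX^N_\M$ are tight as well; and (2) an appeal to the proof of Theorem 1.4 in \cite{burdzymarch00} for the identification of every subsequential limit with $\varphi$ and for the passage from mean measures to the random-measure statement. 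Ingredient (1) is obtained from the coupling built in the proof of Theorem \ref{thm:exist}: by construction $\dist(Y^j_t,\prt D)\le\dist(X^j_t,\prt D)$, the $Y^j$ are pieced together from independent copies of the excursion process $Z$ of Section \ref{sec:Zproc}, and $Z$ spends an asymptotically negligible fraction of time within distance $r$ of the boundary as $r\downarrow 0$; this is precisely what rules out mass escaping to $\prt D$ uniformly in $N$. You name this point (``mass does not escape to $\partial D$'') as the expected difficulty but offer no argument for it, so the new part of the proof is missing.

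In addition, two of the steps you do sketch are shaky as stated. The fixed-point identity $\mu^N\propto G_D\mu^N$ is not exact at finite $N$: when a particle is killed it is reborn at the position of another particle, and that position, conditioned on the killing event, is not distributed according to $\mu^N$ (the killing time of one particle is correlated with the locations of the others), so the Green-function equation can only be expected for subsequential limits as $N\to\infty$, and justifying it requires exactly the correlation control you invoke. Similarly, the $O(1/N)$ propagation-of-chaos bound for the stationary two-particle correlation is asserted rather than proved and is itself a nontrivial claim for this strongly interacting system; the paper sidesteps both issues by deferring the identification and concentration arguments to \cite{burdzymarch00} once tightness in $D$ is in hand. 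Your Krein--Rutman identification of the limit, granted absolute continuity and positivity, is fine, but without the boundary estimate and a genuine argument for asymptotic independence the proposal does not yet constitute a proof.
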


\begin{proof}
Recall processes $Y^j$ defined in the proof of Theorem
\ref{thm:exist}. By construction, we have $\dist(Y^j_t, \prt D)
\leq \dist(X^j_t, \prt D)$, for all $j$ and $t$.

It is elementary to see that the process $Z$ constructed in
Section \ref{sec:Zproc} has the property that
\begin{align*}
    \lim_{r\downarrow 0} \limsup_{t\to\infty} \frac1t
    \int_0^t \I_{\{\dist(Z_s, \prt D) \leq r\}} ds =0, \ \text{a.s.}
\end{align*}
In view of the construction of $Y^j$ from independent copies of
$Z$, we also have, for every $j$,
\begin{align*}
    \lim_{r\downarrow 0} \limsup_{t\to\infty} \frac1t
    \int_0^t \I_{\{\dist(Y^j_s, \prt D) \leq r\}} ds =0, \ \text{a.s.}
\end{align*}
Hence, for every $j$,
\begin{align*}
    \lim_{r\downarrow 0} \limsup_{t\to\infty} \frac1t
    \int_0^t \I_{\{\dist(X^j_s, \prt D) \leq r\}} ds =0, \ \text{a.s.}
\end{align*}
This implies that for every $p_1>0$, one can find $r>0$ so
small that if $\X$ has the stationary measure $\M^N$ then for
every $t$, $\P(X^j_t \notin D_r)\leq p_1$. It follows that for
any $N$, the mean measure $E \XX^N_\M$ of the compact set
$D_{r}$ is not less than $1-p_1$.  Hence, the mean measures $E
\XX^N_\M$ are tight in $D$.  Lemma 3.2.7, p.~32, of
\cite{dawson92} implies that the sequence of random measures
$\XX^N_\M$ is tight and so it contains a convergent
subsequence.

One can complete the proof of the claim that the random
measures $\XX_\M^N$ converge as $N\to\infty$ to the measure
with the density $\varphi$ exactly as in the proof of Theorem
1.4 in \cite{burdzymarch00}, starting on line 9 of page 699.
\end{proof}

\section{Polyhedral domains}

In this section we show that the Lipschitz constant $c\br{N,d}$ in
Theorem \ref{thm:exist} is not sharp, that is, $\tau_\infty =
\infty$, a.s., in some Lipschtz domains with arbitrarily large
Lipschitz constant. Specifically, we will demonstrate the existence
of the two particle process  for all times in arbitrary polyhedral
domains. Unfortunately, our method cannot be easily adapted to the
multiparticle case, so we leave this generalization as an open
problem.

\begin{definition}
We say an open set $D\subset\R^d$ is a \emph{polyhedral domain} if
there exist simplicial complexes $\calig K \supset\partial \calig
K$ such that $\overline D = \bm{\calig K}$ and $\partial D =
\bm{\partial\calig K}$.
\end{definition}
For the remainder of this section we will assume that $D =
\inter\bm{\calig K}$ is a polyhedral domain. Let $\X_t =
\br{X^1_t,X^2_t}$ be a Fleming-Viot process in $D$ and define jump
times $\tau_i$ as before. We will show:
\begin{theorem}\label{sam:thm:main}
If $D$ is a polyhedral domain and $\X_t=\br{X^1_t,X^2_t}$ is a
Fleming-Viot process with jump times $\tau_i$ then
$\tau_i\to\infty$ as $i\to\infty$ almost surely.
\end{theorem}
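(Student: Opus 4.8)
The plan is to rule out the only way $\tau_i$ can fail to diverge, namely that both particles approach $\partial D$ simultaneously. The first step is to record the (standard but slightly delicate) fact that on $\{\tau_\infty<\infty\}$ the configuration converges: right after the $i$-th jump both particles sit at a common point $y_i$, and since $\sum_i(\tau_{i+1}-\tau_i)=\tau_\infty<\infty$ makes the inter-jump motion of the surviving particle tiny for large $i$, one shows $y_i\to z$ for some $z\in\partial D$, hence $\X_{\tau_i}\to(z,z)$. As $\partial D$ is compact it then suffices to show, for each $z\in\partial D$, that a.s. the process makes only finitely many jumps before leaving a fixed small neighborhood of $(z,z)$; and because $D$ is polyhedral, on such a neighborhood $D$ agrees with the tangent cone $\mathcal C=\mathcal C_z$, a polyhedral cone with vertex $z$, so the question becomes local and scale invariant.

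The heart of the argument is a Lyapunov function adapted to the jump rule, in the spirit of $f(x,y)=x^4+y^4-x^2y^2$ in Example \ref{ex:sep2}. I would seek $H\ge 0$ on $\mathcal C\times\mathcal C\subset\R^{2d}$ that is superharmonic in the $2d$ spatial variables, blows up at the vertex $(z,z)$, and is invariant under the jump map, i.e. $H(z',y)=H(y,y)=H(y,z')$ for all $z'\in\partial\mathcal C$, $y\in\mathcal C$. Given such $H$, the process $t\mapsto H(\X_t)$ (stopped when $\X$ leaves the neighborhood) is a nonnegative local supermartingale between jumps and is unchanged at each jump, hence a nonnegative supermartingale, hence a.s. convergent to a finite limit; but $\X_{\tau_i}\to(z,z)$ would force $H(\X_{\tau_i})\to\infty$, a contradiction. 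When $z$ lies in the relative interior of a facet, $\mathcal C$ is a half-space and one can take explicitly $H(x,y)=\big(\dist(x,\partial D)+\dist(y,\partial D)\big)\big/\big(\sqrt2\,(\dist(x,\partial D)^2+\dist(y,\partial D)^2)\big)$: the distance to a hyperplane is harmonic, so $H$ is harmonic and positive on $\mathcal C\times\mathcal C$, it blows up at $(z,z)$, and it satisfies the jump identity because $H(z',y)=1/(\sqrt2\,\dist(y,\partial D))=H(y,y)$ whenever $\dist(z',\partial D)=0$. When $z$ lies on a codimension-$2$ ridge, $\mathcal C=\R^{d-2}\times W_\alpha$ with $W_\alpha$ a planar wedge of opening $\alpha$; the $\R^{d-2}$ directions do not affect hitting $\partial\mathcal C$, and on the $W_\alpha$ factor I would conjugate the half-plane function above by the conformal map $w\mapsto w^{\pi/\alpha}$ that straightens $W_\alpha$ to a half-plane. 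Precomposition with a holomorphic map preserves separate harmonicity, hence harmonicity on $\R^{4}$, and it preserves the jump identity because the two boundary rays of $W_\alpha$ map into the real axis.

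The main obstacle is the general vertex case in dimension $d\ge 3$, where $\mathcal C$ is a pointed polyhedral cone that is not a product and where conformal maps are unavailable (Liouville). I would attack this by induction on dimension: away from the vertex, $\mathcal C$ is locally a product $\R^k\times\mathcal C'$ with $\mathcal C'$ a polyhedral cone of dimension $<d$, so the inductive hypothesis prevents convergence of the particles to any point of $\partial\mathcal C$ other than the vertex; it then remains to produce on $\mathcal C\times\mathcal C$ a nonnegative superharmonic function with the (super)invariance $H(z',y)\ge H(y,y)$ that blows up at $(0,0)$, which I would build as a function homogeneous of a suitable negative degree — i.e. from an eigenfunction on the product spherical polytope $(\mathcal C\times\mathcal C)\cap S^{2d-1}$ satisfying the induced mixed boundary condition, or from the cone-adapted harmonic functions $|x|^p h(\theta)$ combined so as to respect the jump rule. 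This is precisely the point at which the method of Theorem \ref{thm:exist} breaks down: for a narrow cone the excursion exponent $p$ exceeds $1$, so the two stable subordinators of index $p/2$ in Lemma \ref{lem:stable} do intersect (the criterion of Hawkes \cite{hawkes77} fails), and for $p\ge 2$ there is no excursion process from the vertex at all. I expect the delicate parts to be (i) constructing this superharmonic function for an arbitrary pointed polyhedral cone and checking it blows up at the vertex from every direction inside $\mathcal C\times\mathcal C$, so that a possibly-degenerating shape of the configuration still yields the contradiction, and (ii) the continuity step in paragraph one, i.e. that $\tau_\infty<\infty$ genuinely forces $\X_{\tau_i}$ to converge.
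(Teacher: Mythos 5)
There are two genuine gaps, and they sit at exactly the load-bearing points of the argument. First, your opening reduction --- that $\tau_\infty<\infty$ forces $\xi_i\to z$ for a single $z\in\partial D$ --- is unsupported and in fact unavailable. Smallness of the inter-jump oscillations (which is what the Bessel-type argument of Lemma \ref{sam:besselness} yields, i.e.\ $\|\xi_{i+1}-\xi_i\|\to 0$) does not make the sequence Cauchy; it may wander along $\partial D$. Worse, the paper proves (Corollary \ref{sam:cornoconverge}) that a.s.\ $\xi_i$ converges to \emph{no} boundary point, so the dichotomy your plan needs (``either $\tau_\infty=\infty$ or the configuration converges to some boundary point'') is false, and the contradiction has to be extracted from the structure of the \emph{set of limit points}. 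That is precisely what the paper's final argument does: it considers the set $L$ of simplices whose interiors contain limit points of $\xi_i$, notes that $L\subseteq\partial\mathcal K$ on $\{\tau_\infty<\infty\}$ (Corollary \ref{sam:corrlobus}), and uses upcrossing counts together with the wedge factorization of Lemma \ref{sam:wedgelemma} to show that every simplex in $L$ produces a further limit point in a strictly larger star, so $L$ can have no local maximum --- contradicting finiteness of the complex. Your localization ``stop when $\X$ leaves a fixed neighbourhood of $(z,z)$'' cannot substitute for this, because with mere limit points the process may leave and re-enter that neighbourhood infinitely often; so item (ii) of your list is not a technicality but a missing pillar, and the correct fix is not to prove convergence (it fails) but to argue with limit points.

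Second, the key local statement at a cone vertex is not established. Your jump-invariant superharmonic $H$ does work for a half-space, but the codimension-two step fails as written: $f(a,b)=(a+b)/(a^2+b^2)$ is jointly harmonic in $(a,b)$ but not separately harmonic (a computation gives $\partial_a^2 f$ proportional to $(a-b)(a^2+4ab+b^2)$, e.g.\ $\partial_a^2 f=-2/b^3$ at $a=0$), so precomposing each planar factor with $w\mapsto w^{\pi/\alpha}$ produces a Laplacian of the form $(|u'(x)|^2-|u'(y)|^2)\,\Delta_1 f$, which changes sign; you get neither harmonicity nor superharmonicity. For a general pointed polyhedral cone in $d\ge 3$ --- the actual crux, as you acknowledge --- no construction is given at all. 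The paper's Lemma \ref{sam:keylemma} takes a different route that works for an arbitrary cone over an open spherical set and needs no such function: with $M^j_t=\mu(Y^j_t)$ ($\mu$ the radial harmonic function) and $H_t=h^U(\Phi^1_t,\Phi^2_t)$, where $h^U(\theta_1,\theta_2)$ is the probability that the first time-changed angular process exits $U$ before the second, Lemma \ref{sam:harmonic} gives $\mathcal A^1h^U=-\mathcal A^2h^U\ge 0$, and the process $S_t=M^1_t+(M^2_t-M^1_t)H_t$ is shown to be continuous across jumps and a local submartingale, hence cannot tend to $-\infty$, which is exactly what convergence of $\Y_t$ to the vertex would force. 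If you insist on your Lyapunov formulation, you must actually produce, for every pointed tangent cone (including arbitrarily narrow ones, where the homogeneity exponents you invoke exceed $2$), a superharmonic $H$ with $H(z',y)\ge H(y,y)$ for $z'\in\partial\mathcal C$ and blow-up at the vertex uniform over approach directions in $\mathcal C\times\mathcal C$; nothing in the sketch indicates how to do this, and that is where the real difficulty of the theorem lies.
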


As $\X_t$ is a \cadlag process we have $ X^1_{\tau_i} = X^2_{\tau_i}$
for each $i\in\N$, so we may define a sequence of \emph{jump points}
$\xi_i = X^1_{\tau_i} = X^2_{\tau_i}$. Since $\overline D$ is
compact, $\xi_i$ has at least one limit point in $\overline D$. To
prove Theorem \ref{sam:thm:main} we will examine the  behavior of
$\X_t$ when both particles are close to a limit point of $\xi_i$ and,
assuming that $\tau_\infty<\infty$, arrive at a contradiction.

First we will  show that if $t\in \halfint{\tau_i,\tau_{i+1}}$
then $\X_t$ cannot stray too far from $\br{\xi_i, \xi_i}$.

\begin{lemma}\label{sam:besselness}
Set $V^1_t = \bmm{X^1_t - \xi_i},V^2_t = \bmm{X^2_t - \xi_i}$ for
$t\in\halfint{\tau_i,\tau_{i+1}}$. If $\tau_\infty<\infty$ then
$V^1_t\to 0$ and $V^2_t\to 0$ as $t\to\tau_\infty$.
\end{lemma}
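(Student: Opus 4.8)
The idea is to show that during each excursion interval $\halfint{\tau_i,\tau_{i+1}}$ the two particles stay in a small ball around $\xi_i$, and that the radii of these balls are summable when $\tau_\infty<\infty$. First I would fix a reference point and record the elementary geometric facts about the complex $\calig K$: since $D$ is polyhedral, there is $\rho_0>0$ such that for every point $y\in\ol D$ the set $D\cap B(y,\rho_0)$ coincides, up to a rigid motion, with $C\cap B(0,\rho_0)$ for some polyhedral cone $C$ with vertex at $0$ (the cone being all of $\R^d$ when $y$ is interior). The key scaling feature of cones is that Brownian motion killed on exiting a cone, started near the vertex, exits $B(y,\rho)$ before hitting $\prt C$ with probability bounded \emph{away from $1$}, uniformly in the starting point, as long as $\rho$ is at most a fixed fraction of $\rho_0$. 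This is the cone analogue of the estimate used in Lemma~\ref{lem:5a}.

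The main step is a quantitative ``no large jump of the ball'' estimate. Suppose at time $\tau_i$ both particles sit at $\xi_i$, with $\dist(\xi_i,\prt D)=\eta_i$ small. During $\halfint{\tau_i,\tau_{i+1}}$ only one particle, say $X^1$, moves as Brownian motion while the other, $X^2$, is also an independent Brownian motion; whichever first reaches $\prt D$ triggers the jump. I would show that $\max(V^1_t,V^2_t)$ on this interval exceeds $2^m\eta_i$ with probability at most $q^m$ for some fixed $q<1$ depending only on $D$ (not on $i$), by iterating the cone escape estimate over the dyadic annuli $B(\xi_i,2^{k}\eta_i)\setminus B(\xi_i,2^{k-1}\eta_i)$: to reach radius $2^{m}\eta_i$ without hitting $\prt D$, Brownian motion must traverse each annulus, and the probability of crossing from the inner to the outer sphere of each annulus before hitting $\prt D$ is at most a fixed $q<1$, by the local cone picture and the strong Markov property. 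Consequently $\E[\,\sup_{t\in\halfint{\tau_i,\tau_{i+1}}}\max(V^1_t,V^2_t)\mid \xi_i\,]\le C\eta_i$, and more importantly $\eta_{i+1}=\dist(\xi_{i+1},\prt D)$ is stochastically dominated (up to a constant factor independent of $i$) by a fixed random multiple $\beta_i\eta_i$ of $\eta_i$, where the $\beta_i$ are i.i.d.\ and $\E\beta_i<\infty$; in particular the sequence $\eta_i$ cannot blow up and $\sum_i\eta_i$ has a geometric-type tail on the event $\{\tau_\infty<\infty\}$. The point is that on $\{\tau_\infty<\infty\}$ the particles must all approach $\prt D$, so $\eta_i\to 0$, and then $\sup_{t\in\halfint{\tau_i,\tau_{i+1}}}\max(V^1_t,V^2_t)\le C 2^{m_i}\eta_i$ with $m_i$ having an exponential tail uniformly in $i$; a Borel--Cantelli argument along the subsequence where $\eta_i$ is already small then forces $V^1_t,V^2_t\to0$ as $t\to\tau_\infty$.

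More directly, to prove exactly the stated conclusion: assume $\tau_\infty<\infty$. Then $\tau_{i+1}-\tau_i\to0$, so by the cone escape estimate applied at scale $\delta$, the probability that particle~$j$ during $\halfint{\tau_i,\tau_{i+1}}$ ever leaves $B(\xi_i,\delta)$ is at most $\P^{x}(T_{\prt B}>\tau_{i+1}-\tau_i)$ for Brownian motion, which tends to $0$ as $i\to\infty$ for each fixed $\delta$; combined with the cone picture near $\prt D$ (where leaving $B(\xi_i,\delta)$ without the jump being triggered already has probability $<1$), Borel--Cantelli gives that for all large $i$, $\sup_{t\in\halfint{\tau_i,\tau_{i+1}}}V^j_t<\delta$ for each $j$. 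Letting $\delta\downarrow0$ along a sequence yields $V^1_t,V^2_t\to0$ as $t\to\tau_\infty$.

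The hard part will be making the cone escape estimate genuinely uniform: the constant $q<1$ and the scale $\rho_0$ must not degenerate as $\xi_i$ wanders over $\prt D$, including near lower-dimensional faces and vertices where the local cone $C$ changes. Here I would use compactness of $\ol D$ together with the finiteness of the simplicial complex: there are only finitely many combinatorial types of local cones, so a single $\rho_0$ and a single $q$ work. A secondary technical point is dealing correctly with the two-particle state space $D\times D$ and the fact that the jump is triggered by whichever particle hits $\prt D$ first; but since the two particles are independent Brownian motions until the jump, the escape estimate for the pair follows from the one-particle estimate by independence and a union bound, at the cost of replacing $q$ by $2q$ (and then passing to a finer dyadic scale so that $2q<1$ is restored).
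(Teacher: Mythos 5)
There is a genuine gap, and it is exactly at the point this lemma is hard. The conclusion is asserted on the event $\{\tau_\infty<\infty\}$, which is measurable only with respect to the entire path, while every estimate you propose (the dyadic-annulus crossing bound, the bound of order $q^{m}$ for $\sup_t\max(V^1_t,V^2_t)>2^m\eta_i$, the claim that leaving $B(\xi_i,\delta)$ within time $\tau_{i+1}-\tau_i$ is unlikely) is an unconditional estimate for the unstopped process. Neither of your Borel--Cantelli appeals bridges this: in the ``more direct'' argument the relevant bound is a fixed $q<1$, which is not summable, $\tau_{i+1}-\tau_i$ is random and correlated with the very increments being estimated, and no independence is available; in the main argument, summability of probabilities of order $(\eta_i/\delta)^{p}$ is never established and would require knowing how fast $\eta_i\to 0$ on $\{\tau_\infty<\infty\}$. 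Worse, the input ``on $\{\tau_\infty<\infty\}$ the particles approach $\prt D$, so $\eta_i\to 0$'' is essentially a consequence of the lemma itself (both particles start each interval at $\xi_i$ and one of them is on $\prt D$ at its end, so $\eta_i\le\sup_t\max(V^1_t,V^2_t)$), so the argument is circular there; and the claimed domination $\eta_{i+1}\preceq\beta_i\eta_i$ with i.i.d.\ $\beta_i$ is unjustified, since the local geometry seen from $\xi_i$ changes from jump to jump. Finally, the asserted uniform cone/capacity constant $q<1$ is itself delicate: the definition of a polyhedral domain allows boundary simplices of codimension $\ge 2$ (even polar pieces), near which no uniform annulus-crossing bound holds and $\eta_i$ can be small without the particles being near any hittable part of $\prt D$.

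The paper's proof avoids all of this and uses no geometry of $D$ at all; it is purely pathwise. On each interval $[\tau_i,\tau_{i+1})$ the process $V^1$ is a $d$-dimensional Bessel process, and at each $\tau_i$ it is reset to $0$, so all its jumps are downward. Adding the jumps back and removing the drift produces an honest Brownian motion $W$. If $V^1$ exceeds $\varepsilon$ during some excursion interval, that interval must contain an upcrossing of $[\varepsilon/2,\varepsilon]$ by $V^1$ (a jump cannot produce it, since jumps go down), and if the interval lies in a window of length $\delta=\varepsilon^2/(4(d-1))$ the drift contributes at most $\varepsilon/4$ during the upcrossing, forcing an oscillation of $W$ of size $\varepsilon/4$. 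A Brownian path a.s.\ makes only finitely many such oscillations in an interval of length $\delta$, and on $\{\tau_\infty<\infty\}$ all but finitely many excursion intervals fit into one such window; hence only finitely many intervals see $V^1>\varepsilon$, and $V^1_t\to 0$. Because this is a statement about each fixed sample path, no conditioning on $\{\tau_\infty<\infty\}$, no Borel--Cantelli, and no uniform boundary estimates are needed. If you want to salvage your approach, you would need a device of this kind (pathwise oscillation or upcrossing counting, or an optional-stopping argument) to replace the unconditional probability bounds; as written, the proposal does not prove the lemma.
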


\begin{proof}
It suffices to consider only $V^1_t$. Notice that $V^1_t$ is a
$d$-dimensional Bessel process ($\bes\br d$, for short), reset to $0$
at each $\tau_i$.  So setting $\Delta V^1_i =V^1_{\tau^-_i}$ we may
extract a Brownian motion
\begin{equation*}
    W_t = V^1_t + \; \sum_{\setof{i\in\N}{\tau_i\leq t}} \Delta V^1_i \;  - \int_{0}^t \frac{d-1}{2V^1_t} dt.
\end{equation*}

Consider  $\varepsilon>0$. We will count the number of upcrossings of
the interval $\bs{\frac \varepsilon 2 , \varepsilon}$ within a short
time interval $\bs{t,t+\delta}$, where $\delta =
\varepsilon^2/(4\br{d-1})$. Consider times $t<s'<s<t+\delta$ where
$V^1_s\geq\varepsilon$ and $s' = \sup\setof{\tilde s<s}{V^1_{\tilde
s} = \frac\varepsilon 2}$. Notice as $V^1$ only jumps downwards there
is no $i\in \N$ such that $s'<\tau_i\leq s$. We have
\begin{equation*}
\begin{split}
W_{s} - W_{s'}  &=   V^1_{s} - V^1_{s'}  -  \int_{s'}^{s}  \frac{d-1}{2V^1_t} dt \\
& \geq \frac\varepsilon 2 - \br{s - s'}  \frac{d-1}{\varepsilon} \\
&\geq \frac\varepsilon 2 -
\frac{\varepsilon^2}{4\br{d-1}} \frac{d-1}{\varepsilon}
=\frac\varepsilon 4.
\end{split}
\end{equation*}
So on a short time interval, each upcrossing of $\bs{\frac
\varepsilon 2,\varepsilon}$ by $V^1$ corresponds to an
oscillation of $\frac\varepsilon 4$ by $W$.  As $W$ is a
Brownian motion, with probability $1$, $V^1$ makes only
finitely many upcrossings of $\bs{\frac \varepsilon
2,\varepsilon}$ in a given time interval $\bs{t,t+\delta}$.  If
$\tau_\infty<\infty$,  we may find $n\in\N$ with
$\tau_n\geq\tau_\infty-\delta$. So if $V^1_t>\varepsilon$ for
some $\tau_n<\tau_i<t<\tau_{i+1}$ then  as $V^1$ is reset to
$0$ at $\tau_i$ there must be an upcrossing of
$\bs{\frac\varepsilon 2,\varepsilon}$ in the interval
$\halfint{\tau_i,\tau_{i+1}}\subset\bs{\tau_n,\tau_n+\delta}$.
So $V^1_t>\varepsilon$ in only finitely many intervals
$\halfint{\tau_i,\tau_{i+1}}$ and, as $\varepsilon$ is
arbitrary, $V^1_t\to 0$ as $t\to\tau_\infty.$
\end{proof}

\begin{corollary}\label{sam:corrlobus}
If $\tau_\infty<\infty$ then the sequence $\xi_i$ has no limit
point $\xi_\infty\in D$.
\end{corollary}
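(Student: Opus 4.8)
The plan is to argue by contradiction: assume $\tau_\infty < \infty$ and that some subsequence $\xi_{i_k} \to \xi_\infty$ with $\xi_\infty \in D$, and derive an impossibility. Since $D$ is open, there is a radius $\eta > 0$ with $B(\xi_\infty, 2\eta) \subset D$. The key point is that once a jump point $\xi_i$ lands in $B(\xi_\infty, \eta)$ and $i$ is large, Lemma \ref{sam:besselness} forces both particles to stay within distance $\eta$ of $\xi_i$ for all $t \in \halfint{\tau_i, \tau_{i+1}}$ once $\tau_i$ is close enough to $\tau_\infty$ --- more precisely, for $i$ large we have $V^1_t, V^2_t \leq \eta$ on that interval, so $\X_t$ remains in $B(\xi_\infty, 2\eta) \subset D$ throughout. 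In particular neither particle can hit $\partial D$ during $\halfint{\tau_i, \tau_{i+1}}$, contradicting the very definition of $\tau_{i+1}$ as the first time after $\tau_i$ that one of the particles reaches $\partial D$.

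More carefully: choose $\eta > 0$ with $\dist(\xi_\infty, \partial D) > 3\eta$. By the assumption $\tau_\infty < \infty$ and Lemma \ref{sam:besselness}, $V^1_t \to 0$ and $V^2_t \to 0$ as $t \to \tau_\infty$, so there is an index $n$ such that $V^1_t, V^2_t < \eta$ for all $t$ with $\tau_n \leq t < \tau_\infty$. Because $\xi_{i_k} \to \xi_\infty$, we may also pick $k$ large enough that $i_k \geq n$ and $\bmm{\xi_{i_k} - \xi_\infty} < \eta$. Then for every $t \in \halfint{\tau_{i_k}, \tau_{i_k + 1}}$ and each $j \in \{1,2\}$,
\begin{equation*}
  \bmm{X^j_t - \xi_\infty} \leq \bmm{X^j_t - \xi_{i_k}} + \bmm{\xi_{i_k} - \xi_\infty} = V^j_t + \bmm{\xi_{i_k} - \xi_\infty} < 2\eta,
\end{equation*}
so $\dist(X^j_t, \partial D) > 3\eta - 2\eta = \eta > 0$. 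Hence no particle reaches $\partial D$ on the half-open interval $\halfint{\tau_{i_k}, \tau_{i_k+1}}$; but by the construction of the Fleming-Viot process recalled in Section \ref{FVproc}, $\tau_{i_k+1}$ is precisely the first time after $\tau_{i_k}$ at which some particle hits $\partial D$. This is the desired contradiction, so under $\tau_\infty < \infty$ the sequence $\xi_i$ has no limit point in $D$.

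I do not anticipate a serious obstacle here: the corollary is essentially a direct packaging of Lemma \ref{sam:besselness}, and the only thing to be careful about is the bookkeeping with half-open versus closed intervals and the fact that the $\xi_i$ convergence is along a subsequence (which is harmless, since we only need one interval $\halfint{\tau_{i_k}, \tau_{i_k+1}}$ to produce the contradiction). The one point worth stating explicitly is why $\tau_{i_k} < \tau_\infty$ implies $\tau_{i_k+1} \leq \tau_\infty$, so that the whole interval $\halfint{\tau_{i_k},\tau_{i_k+1}}$ lies in the region where Lemma \ref{sam:besselness}'s conclusion applies; this is immediate since $\tau_{i_k+1} \leq \tau_\infty = \lim_m \tau_m$.
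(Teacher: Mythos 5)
Your argument is correct and is essentially the paper's own proof, just phrased as a contradiction rather than a contrapositive: both rest on Lemma \ref{sam:besselness} plus the observation that between consecutive jump times the dying particle must travel from $\xi_i$ all the way to $\partial D$, which forces $V^1_t\vee V^2_t$ to exceed a fixed $\eta$ on $\halfint{\tau_i,\tau_{i+1}}$ whenever $\xi_i$ is near a point of $D$. The only cosmetic point worth noting is that the boundary is reached as the left limit at $\tau_{i_k+1}$ rather than at a time strictly inside the half-open interval, but your bound $\bmm{X^j_t-\xi_\infty}<2\eta$ on that interval passes to the left limit and yields the same contradiction.
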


\begin{proof}
Fix $x\in D$. As $D$ is open, there exists some $\varepsilon$ with
$B\br{x,2\varepsilon}\subset D$. If $\xi_i\in B\br{x,\varepsilon}$
then, as both particles follow continuous paths until one exits $D$,
we must have $V^1_t\vee V^2_t>\varepsilon$ for some
$t\in[\tau_i,\tau_{i+1})$. So if $V^1_t,V^2_t\to 0$ as $t\to
\tau_\infty$ then $\xi_i\in B\br{x,\varepsilon}$ for only finitely
many $i$. As $x$ is arbitrary we see that so long as $V^1_t,V^2_t\to
0$ as $t\to \tau_\infty$, $\xi_i$ can have no limit point in~$D$.
\end{proof}

Corollary \ref{sam:corrlobus} is similar to a result in
\cite{loebus} (Step 1 of Theorem 7). In that paper, a system
consisting of an arbitrary number of particles is considered,
but the boundary $\prt D$ is assumed to be smooth.

\newcommand{\Span}[1]{\calig S_{#1}}
It is convenient at this point to introduce some notation that will
allow us to consider the behavior of $\X_t$ when it is close to the
boundary of a simplicial complex. Let $\sigma$ be a $k$-simplex with
vertices $\bc{v_0,\dots,v_k}$, that is
\begin{equation*}
    \sigma = \setof{\sum_{i=0}^k \lambda_iv_i}
    {\lambda _0,\dotsc,\lambda _k\geq 0, \,\sum_{i=0}^k\lambda_i = 1}.
\end{equation*}
Then define the \emph{interior} of $\sigma$
\begin{equation*}
    \interior\sigma = \setof{\sum_{i=0}^k \lambda_i v_i}
    {\lambda _0,\dotsc,\lambda _k> 0,\, \sum_{i=0}^k \lambda_i = 1}
\end{equation*}
and the \emph{span} of $\sigma$ to be the subspace
\begin{equation*}
    \Span\sigma = \setof{\sum_{i=0}^k \lambda_iv_i}{\sum_{i=0}^k \lambda_i = 0}.
\end{equation*}

For two simplices $\sigma_1, \sigma_2\in\calig K$ we write
$\sigma_1\leq\sigma_2$ if $\sigma_1$ is a face of $\sigma_2$ and
$\sigma_1<\sigma_2$ if $\sigma_1$ is a proper face of $\sigma_2$.
We name the \emph{star} of a simplex $\sigma$ to be the set
\newcommand{\Star}[1]{St\br{#1}}
\[\Star\sigma = \setof{\sigma_1\in \calig K}{\sigma_1\geq\sigma}\]
and define the \emph{neighborhood} of $\sigma$ as
\newcommand{\hood}[1]{\calig N\br{#1}}
\begin{equation*}
   \hood\sigma = \setof{x\in {\overline D}}
   {x\in\,\interior{\sigma_1} \text{ for some } \sigma_1 \geq \sigma}.
\end{equation*}

Given simplices  $\sigma\leq\sigma_1$ name the vertices of
$\sigma$ and $\sigma_1$,  $\bc{v_0,\dots,v_k}$ and $\bc{v_0,\dots,v_n}$
respectively. Define the \emph{wedges}
\newcommand{\Wedge}{\calig W}
\begin{align*}
\Wedge\br{\sigma,\sigma_1}
&= \setof{\sum_{i=0}^n\lambda_i v_i}
{\lambda_{k+1},\dotsc,\lambda_n>0,\; \sum_{i=0}^n \lambda_i = 1}, \\
\Wedge\br\sigma &=
\bigcup_{\sigma_1\in\Star\sigma}\Wedge\br{\sigma,\sigma_1}.
\end{align*}
Notice that $\hood\sigma\subset \overline{\Wedge\br\sigma}$ and that $\hood\sigma$ is open with respect to the subspace topology of~$\overline D$.  Notice also that $\Wedge\br\sigma$ is a product space
\newcommand{\Cone}[1]{\calig C\br{#1}}
\[\Wedge\br\sigma =  \Cone\sigma \times \Span\sigma,\]
where the cone $\Cone\sigma$ is the projection  of $\Wedge\br\sigma$
onto $\Span\sigma^\bot$.

Now, consider $\sigma\in\partial \calig K$ and  suppose there exists
a subsequence $\xi_{i_n}\to\xi_\infty\in\interior\sigma$. As
$\xi_\infty\in\interior\sigma\subset\hood\sigma$ and $\hood\sigma$ is
open in $\overline D$ we may assume without loss of generality that
$\xi_{i_n}\in\hood\sigma$ for each $n$. So consider $\X_t$ started at
$\br{\xi_{i_n}, \xi_{i_n}}$ at time $\tau_{i_n}$ and stopped at the
first time $T>\tau_{i_n}$ where one of $X^1_t, X^2_t$ exits
$\hood\sigma$. Of course, as $\hood\sigma \subset
\Wedge\br\sigma\cap\overline D$, this has the same distribution as a
Fleming-Viot process in $\Wedge\br\sigma$ started and stopped in the
same way.

So, let $\P[x]_\sigma$ and $\E[x]_\sigma$ be the probability measure
and expectation operator associated with a Fleming-Viot process in
$\Wedge\br\sigma$ started at $\X_0 = \br{x,x}$. The $\Span\sigma$
and $\Span\sigma^\bot$ components are not quite independent as
they have the same jumps, but $\P_\sigma$ allows a partial
factorization as follows.

\renewcommand{\Z}{\mathbf Z}
\begin{lemma} \label{sam:wedgelemma}
If $\X_t$ is a Fleming-Viot process in $\Wedge\br\sigma$ then there
is a well defined decomposition $\X_t = \Y_t + \Z_t$ with $\Y_t =
\br{Y^{1}_t,Y^{2}_t}\in\Cone\sigma^2,\Z_t=
\br{Z^{1}_t,Z^{2}_t}\in\Span\sigma^2$ with the following properties
\begin{itemize}
\item $\Y_t$ is a Fleming-Viot process in $\Cone\sigma$;
\item there exists a  Brownian motion $\tilde Z_t$ in $\Span\sigma$ (not
adapted to the filtration of $\X_t$), independent of $\Y_t$, such
that for each $i\in\N$ we have $\tilde Z_{\tau_i} = \zeta_i$ with
$\zeta_i = Z^1_{\tau_i} = Z^2_{\tau_i}$.
\end{itemize}
\end{lemma}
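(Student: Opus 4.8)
The plan is to exploit the product structure $\Wedge(\sigma) = \Cone\sigma \times \Span\sigma$ noted just before the statement. Write each particle as $X^i_t = Y^i_t + Z^i_t$, where $Y^i_t$ is the orthogonal projection onto $\Span\sigma^\bot$ (living in $\Cone\sigma$) and $Z^i_t$ is the projection onto $\Span\sigma$. Since the two particles of a Fleming–Viot process move as independent Brownian motions between jumps, and a jump consists of relocating one particle to the current position of the other, both the $\Cone\sigma$-components and the $\Span\sigma$-components inherit exactly this description in their respective (lower-dimensional) state spaces: between jumps they are independent Brownian motions in $\Span\sigma^\bot$ (resp. $\Span\sigma$), and at each $\tau_i$ one component is reset to the other. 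Hence $\Y_t = (Y^1_t, Y^2_t)$ is by definition a Fleming–Viot process in $\Cone\sigma$, with the \emph{same} jump times $\tau_i$ as $\X_t$. This establishes the first bullet essentially by inspection, once one checks that the exit time from $\Wedge(\sigma)$ is governed entirely by the $\Cone\sigma$-component (because $\Wedge(\sigma)$ is a product with the full subspace $\Span\sigma$ as the second factor, so leaving $\Wedge(\sigma)$ means leaving $\Cone\sigma$), which guarantees that the jump mechanism of $\Y$ is not corrupted by the $\Z$-dynamics.

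The substantive point is the second bullet: producing a genuine Brownian motion $\tilde Z_t$ in $\Span\sigma$, independent of $\Y_t$, that agrees with the common post-jump value $\zeta_i = Z^1_{\tau_i} = Z^2_{\tau_i}$ at every jump time. The idea is that $Z^1$ and $Z^2$ are each Brownian motions in $\Span\sigma$ that are periodically ``glued'': on $[\tau_i, \tau_{i+1})$ they are two independent Brownian motions started from the common point $\zeta_i$, and at $\tau_{i+1}$ the surviving one's value becomes $\zeta_{i+1}$ and both restart from there. So the sequence $(\zeta_i)$ is itself a Brownian motion sampled at the (random, but $\Z$-independent given the time increments... careful) times $\tau_i$. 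I would construct $\tilde Z$ by \emph{splicing}: on $[\tau_i,\tau_{i+1})$ let $\tilde Z_t = Z^{k_i}_t$ where $k_i \in \{1,2\}$ is the index of the particle that \emph{survives} the jump at $\tau_{i+1}$ (equivalently, the one not hitting $\partial D$), i.e. the particle whose $\Span\sigma$-value at $\tau_{i+1}$ equals $\zeta_{i+1}$. Then $\tilde Z$ is continuous across each $\tau_i$ by construction, $\tilde Z_{\tau_i} = \zeta_i$, and on each interval it follows a Brownian path. To see it is a Brownian motion on $\halfint{0,\tau_\infty}$, note that the choice of which particle survives at $\tau_{i+1}$ is determined by the $\Cone\sigma$-components $\Y$ together with the uniform label chosen at the jump — it does \emph{not} look into the future of the $\Span\sigma$-Brownian motions beyond $\tau_{i+1}$, nor does conditioning on ``particle $k$ survives'' bias the law of that particle's $\Span\sigma$-path, because the two $\Span\sigma$-motions are independent of the $\Cone\sigma$-motions and of the labels, and exchangeable. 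Hence, conditionally on the entire jump structure and on $\Y$, the spliced path $\tilde Z$ has the law of a Brownian motion in $\Span\sigma$ started at $Z^1_0$; since this conditional law does not depend on $\Y$ or the jump data, $\tilde Z$ is an \emph{unconditional} Brownian motion and is independent of $\Y_t$.

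The step I expect to be the main obstacle is making the last independence/splicing argument fully rigorous, in particular verifying that the event ``particle $k_i$ is the survivor at $\tau_{i+1}$'' is measurable with respect to information that is independent of the increments of the $\Span\sigma$-Brownian motions \emph{on the future interval} $[\tau_i,\tau_{i+1})$ themselves — one must be careful that $\tau_{i+1}$ and the identity of the hitting particle are functions of the $\Cone\sigma$-paths and the labels only (true, since hitting $\partial\Wedge(\sigma)$ is a $\Cone\sigma$-event), so that conditioning on them does not distort the $\Span\sigma$-paths. I would formalize this by building everything on a product probability space: an i.i.d. sequence of label choices, the two $\Cone\sigma$-valued Fleming–Viot-driving Brownian families (which alone determine all $\tau_i$ and all survivor indices), and two \emph{independent} $\Span\sigma$-valued Brownian motions used only to fill in the $\Span\sigma$-coordinates; then $\tilde Z$ is a deterministic functional of the $\Span\sigma$-Brownians and the survivor indices, and a direct check of finite-dimensional distributions (using the strong Markov property of Brownian motion at each $\tau_i$, which is a stopping time for the full filtration) shows $\tilde Z$ is Brownian and independent of the $\Cone\sigma$-data, hence of $\Y$. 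I note that $\tilde Z$ is not adapted to the filtration of $\X$ precisely because defining $\tilde Z$ on $[\tau_i,\tau_{i+1})$ requires knowing, in advance, which particle will survive at $\tau_{i+1}$; this non-adaptedness is harmless for the intended application.
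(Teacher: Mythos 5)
Correct, and essentially the paper's own argument: you use the same product decomposition, the same splicing construction of $\tilde Z$ via the surviving-particle index (your $k_i$ is the paper's $J_i$), and the same key observation that the jump times and the survivor index are determined by the $\Cone\sigma$-data alone, hence independent of the $\Span\sigma$-Brownian paths, so that conditioning on them does not bias the spliced $\Span\sigma$-path. The only difference is in how the final verification is discharged: the paper shows $\tilde Z$ is a martingale with respect to the enlarged filtration $\mathcal F^{\mathbf Y}_\tau\vee\mathcal F^{\mathbf Z}_t$ and invokes L\'evy's characterization, obtaining independence of $\mathbf Y$ automatically since $\mathcal F^{\mathbf Y}_\tau\subset\mathcal G_0$, whereas you propose a product-space construction with a direct finite-dimensional-distribution check using the strong Markov property at each $\tau_i$ — interchangeable ways of making the same intuition rigorous.
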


\begin{proof}
Obviously,  as $\Cone\sigma\subset \Span\sigma^\bot$, the
factorization $\X_t = \Y_t + \Z_t$ is unique. Further,  on each
interval $\halfint{\tau_i, \tau_{i+1}}$, the processes $Y^1_t$,
$Y^2_t$, $Z^1_t$ and $Z^2_t$ evolve as independent Brownian motions
on  $\Span\sigma^\bot$ and $\Span\sigma$ respectively. So as
$\Span\sigma$ is a subspace and has no boundary, $X^j_t$ jumps when
and only when $Y^j_t$ hits $\partial\Cone\sigma$, and so $\Y_t$ is
indeed a Fleming-Viot process on $\Cone\sigma$.

Now for each $i\in\N$ only one of $X^1_t$, $X^2_t$ has a
discontinuity at $\tau_{i+1}$, so there is a well defined sequence
of random variables $J_i\in \bc{1,2}$ such that $X^{J_i}_t$ is
continuous on the closed interval $\bs{\tau_i, \tau_{i+1}}$ and we
may define a continuous process
\begin{equation*}
    \tilde Z_t = Z^{J_i}_t, \quad t\in\bs{\tau_i, \tau_{i+1}}.
\end{equation*}
Then  $\tilde Z_{\tau_i} = \zeta_i$ for every $i$ and it remains to
show that $\tilde Z_t$ is a Brownian motion independent of $\Y_t$. Of
course $\tilde Z_t$ is only defined up to $\tau_\infty$.  But we may
continue $\tilde Z_t$ after  $\tau_\infty$ with an independent
Brownian motion if necessary.

Now as $\tilde Z_t$ follows either $Z^1_t$ or $Z^2_t$ then the
quadratic variation $\bigl\langle\tilde Z\bigr\rangle_t = t\mathrm I$
and, by L\'evy's characterization, we need only check that $\tilde
Z_t$ is a martingale with respect to its own natural filtration and
is independent of $\Y_t$.  Furthermore, although $\tilde Z_t$ is not
adapted to $\X_t$, for each $\tau_i$, the path $\tilde
Z\bigm|_{\bs{0,\tau_i}}$ is measurable with respect to
$\restrict\X{\bs{0,\tau_i}}$. Therefore, by the strong Markov
property, it is sufficient to consider only intervals
$\halfint{\tau_i, \tau_{i+1}}$.

In fact it suffices to consider only the first time interval
$\halfint{0,\tau_1}$. Let $\X_t$  be a Fleming-Viot process
started at  $\xi_0\in\Wedge\br\sigma$ and stopped at $\tau_1$.
Then the left limit process is a pair of independent Brownian
motions stopped at $\tau = \tau_1^-$. Set  $J= J_0$ and we have
$\xi_1 = X^J_\tau \in\Wedge\br\sigma$ and
$X^{3-J}_\tau\in\partial\Wedge\br\sigma$.

So set $\X_t = \Y_t + \Z_t$ as in the statement of the lemma and let
$\calig F^{\Y}_t$, $\calig F^{\Z}_t$ and $\calig F^{\tilde Z}_t$ be
the natural filtrations of $\Y$, $\Z$ and  $\tilde Z$ respectively.
Set $\zeta_0 = Z^1_0$, $\zeta_1 = Z^J_\tau$ to be the $\calig
F^\X_\tau$-measurable $\Z$-components of $\xi_0$ and $\xi_1$,
respectively. Thus, $\tau$ is a stopping time of $\calig F^{\Y}_t$
and $J$ is measurable with respect to $\calig F^{\Y}_\tau$. Now
crucially $\Y$ and $\Z$ are independent processes so for $t<\tau$ we
have
\begin{equation*}
    \E[\xi_0]_\sigma\br{\barrier{\zeta_1}{\calig F^\Y_\tau\vee\calig F^\Z_{t}}}
    = \E[\xi_0]_\sigma\br{\barrier{Z^J_\tau}
    {\calig F^\Y_\tau\vee\calig F^\Z_{t}}}
    = \tilde Z_t.
\end{equation*}

Thus $\tilde Z$ is a martingale,  and hence a Brownian motion, with
respect to the filtration $\calig G_t = \calig F^\Y_\tau\vee \calig
F^\Z_t$. Therefore $\tilde Z$ is independent of $\calig
F^\Y_\tau\subset\calig G_0$ and is a Brownian motion with respect to
its own natural filtration $\calig F^{\tilde Z}_t\subset\calig G_t$.

\end{proof}

Now $\Y_t$ is a process in a cone and if $\xi_i$ converges to
some point in $\interior\sigma$ then $\Y_t$ must converge to the
apex of $\Cone\sigma$.  Our next step is to show that this cannot
be the case.

\begin{lemma} \label{sam:keylemma}
If $\Y_t$ is a Fleming-Viot process in a cone $ C\subset\R^d$ then,
with probability one, $\Y_t$ does not converge to $\br{\underline 0,
\underline 0}$.
\end{lemma}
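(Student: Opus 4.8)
The plan is to combine the scaling invariance of the cone $C$ with the harmonic function that vanishes on $\partial C$, as in Section~\ref{sec:Zproc}, but now exploiting the two–particle jump rule. Let $v(x)=|x|^{p}\,\phi(x/|x|)$ be the positive harmonic function on $C$ that vanishes continuously on $\partial C\setminus\set{\underline 0}$ (for $C=K_{p,d}$ this is $|x|^{p}h(\theta)$; in general $p>0$ is the characteristic exponent of $C$ and $\phi$ the first Dirichlet eigenfunction of the spherical Laplacian of $C\cap S^{d-1}$, normalized by $\max\phi=1$). Write $N_t$ for the number of jumps of $\Y$ in $[0,t]$ and $\xi_i=Y^1_{\tau_i}=Y^2_{\tau_i}$ for the $i$-th jump point. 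The key structural fact is that $v$ linearizes the jump mechanism: between jumps $v(Y^1_t)+v(Y^2_t)$ is a nonnegative local martingale (It\^o's formula and $\Delta v=0$, the two pieces being absorbed at $0$ when a particle reaches $\partial C$), while at a jump the particle sitting on $\partial C$ contributes $0$ and the value of the survivor is duplicated, so $v(Y^1_t)+v(Y^2_t)$ is multiplied by $2$. Hence
\[
  \mathcal M_t:=2^{-N_t}\bigl(v(Y^1_t)+v(Y^2_t)\bigr),\qquad t\in[0,\tau_\infty),
\]
is continuous across the jumps and is a nonnegative local martingale, hence a supermartingale, so $\mathcal M_t$ converges a.s.\ to a finite limit as $t\uparrow\tau_\infty$. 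The same reasoning applied to the Kelvin transform $\wt v(x)=|x|^{2-d-p}\phi(x/|x|)$ of $v$ — again harmonic on $C$ and vanishing on $\partial C$, but blowing up at the apex — yields a second nonnegative supermartingale $\wt{\mathcal M}_t:=2^{-N_t}(\wt v(Y^1_t)+\wt v(Y^2_t))$.

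I would next record what convergence to the apex forces. Since $C\ne\R^d$, between consecutive jumps each particle is a Brownian motion reaching $\partial C$ in finite time, so $\Y$ has infinitely many jumps and $N_t\to\infty$ regardless of the value of $\tau_\infty$. If $\Y_t\to(\underline 0,\underline 0)$ then $v(Y^j_t)\to 0$, forcing $\mathcal M_t\to 0$, while $\wt v(Y^j_t)\to\infty$; the natural hope is that the latter, together with a.s.\ finiteness of $\lim\wt{\mathcal M}_t$, already contradicts convergence to the apex, but in fact it only forces $N_t$ to be at least a constant multiple of $\log(1/\rho_t)$, where $\rho_t:=|Y^1_t|\vee|Y^2_t|$ (at least while the angular parts stay in a fixed subcone), and this is compatible with a sufficiently slow approach. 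An additional input is therefore needed, and here the self-similarity of the cone enters: by the scaling $\Y\mapsto\alpha\Y_{\alpha^{-2}\cdot}$, after each jump the process, suitably rescaled, is again a cone Fleming–Viot process of the same type, so it suffices to bound below, uniformly over starting configurations $\br{y,y}$ with $\bm{y}=1$ and over all directions, the probability that $\rho_t$ ever reaches $2$.

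The heart of the proof — and the step I expect to be the real obstacle — is to establish such a uniform escape estimate and combine it with the supermartingale bounds to contradict $\Y_t\to(\underline 0,\underline 0)$. My approach would be a renewal argument over the dyadic scales $2^{-m}$: on $\set{\Y_t\to(\underline 0,\underline 0)}$ the process visits every scale on the way down, and at each visit the strong Markov property together with a uniform positive escape probability would force infinitely many ``bounce-ups'', incompatible with eventual convergence to the apex. For the escape estimate itself I would use the excursion description of Section~\ref{sec:Zproc}: when $p<2$ the set of times at which a given particle lies on $\partial C$ near the apex is governed by the excursion law $H^0$ and its stable subordinator (Lemmas~\ref{lem:4} and~\ref{lem:stable}), and the two such sets for the two particles — which share only the jump times, a dependence factored out exactly as in Lemma~\ref{sam:wedgelemma} — meet, when they meet at all, at boundary points bounded away from the apex; when $p\ge 2$ the excursion process started at the apex does not exist, which is precisely what keeps the two particles from clustering at $\underline 0$. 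The two remaining technical points are (i) the optional–stopping justification for $\mathcal M$ and $\wt{\mathcal M}$, which are only strictly local martingales on the unbounded cone and must be handled by stopping when a particle reaches a large or a small sphere and letting the radius tend to $\infty$ or $0$, much as in the proof of Lemma~\ref{lem:main}, and (ii) the recurrence of the angular component of $\Y$ underlying the uniform escape estimate — this last is, I expect, where most of the work lies.
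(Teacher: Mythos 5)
Your two compensated processes $\mathcal M_t=2^{-N_t}\br{v(Y^1_t)+v(Y^2_t)}$ and $\wt{\mathcal M}_t$ are correctly set up (the factor $2^{-N_t}$ does make them continuous across jumps, and between jumps they are nonnegative local martingales), but, as you yourself concede, they prove nothing by themselves: supermartingale convergence is perfectly compatible with $\Y_t\to\br{\underline 0,\underline 0}$, because $v$ vanishes at the apex and the Kelvin-transformed bound only forces $N_t$ to grow like a multiple of $\log(1/\rho_t)$ along suitable times. The entire burden of the lemma is thus shifted onto the ``uniform escape estimate''/angular recurrence, which is exactly the step you leave open and describe as the real obstacle. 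Moreover, the route you propose for it cannot work in the required generality: the excursion law $H^0$ at the apex and the stable-subordinator description of boundary visits (Lemmas \ref{lem:4} and \ref{lem:stable}) exist only for $p<2$, i.e.\ only for sufficiently wide cones, whereas Lemma \ref{sam:keylemma} must hold for an arbitrary cone --- that is precisely what is needed to handle polyhedral domains with arbitrary angles, and the narrow-cone case is the one the earlier Lipschitz argument does not reach. Your remark that for $p\geq 2$ ``the excursion process does not exist, which is precisely what keeps the two particles from clustering'' is not an argument: nonexistence of an excursion law started at the apex says nothing about whether the jump points of the two-particle process can accumulate there, which is the event to be excluded. (Also, the factorization of Lemma \ref{sam:wedgelemma} separates the wedge into cone and subspace components; it does not make the boundary-visit sets of the two particles independent, so the Hawkes-type intersection argument cannot be invoked as you suggest.)

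For comparison, the paper's proof uses neither the cone's harmonic function $v$ nor any restriction on $p$. It takes the radial harmonic function $\mu(x)=\log|x|$ (resp.\ $|x|^{2-d}/(2-d)$ for $d\geq 3$), sets $M^j_t=\mu(Y^j_t)$, and interpolates: $S_t=(1-H_t)M^1_t+H_tM^2_t$, where $H_t=h^U(\Phi^1_t,\Phi^2_t)$ is the probability that the time-changed angular process of the first particle exits the spherical cap $U$ before that of the second. The weight $H_t$ tends to $1$ (resp.\ $0$) exactly when particle $1$ (resp.\ $2$) is about to be killed, so $S_t$ is continuous across jumps; the inequality $\calig A^1h^U=-\calig A^2h^U\geq 0$ of Lemma \ref{sam:harmonic}, together with the fact that $\mu$ is increasing in the radius, makes $S_t$ a continuous local submartingale, and convergence of $\Y_t$ to the apex would force $S_t\to-\infty$, which is impossible. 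This single interpolation device replaces both your renewal scheme and the unproved escape estimate; if you wish to salvage your scaling/renewal skeleton you would still need a direction-uniform positive escape probability at a fixed scale, and some control of the joint angular motion of the type furnished by Lemma \ref{sam:harmonic} appears unavoidable for that.
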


To prove this we will need to consider the angular components,
$\Phi^j_t = \frac {Y^j_t}{\bmm{Y^j_t}}$, of $\Y$. We will recall
briefly some facts about spherical Brownian motion.  We will omit
details, which can be found in \cite[Chapter 8]{oksendal},
particularly Example 8.5.8.

\newcommand{\Sp}[1][d-1]{\mathbb S^{#1}}
Let $B_t$ be a Brownian motion on $\R^d$, let the unit sphere be
denoted
\[\Sp= \setof{x\in\R^d}{\bmm x = 1},\]
and define the map $\phi:\R^d\backslash \bc{\underline 0}\to\Sp$ by
$\phi\br x = \frac x{\bmm x}$.

Now let $\Phi_t = \phi\br{B_t}$. Applying Ito's formula,
\begin{equation*}
    d\Phi_t
    = \frac 1{\bmm{B_t}} \br{I - \Phi_t\Phi^\top_t}dB_t
    - \frac{d-1}{2\bmm{B_t}^2}\Phi_t\,dt.
\end{equation*}
Note we are interpreting $\Phi_t$ as a column vector so
$\Phi_t\Phi^\top_t$ is a square matrix. Now define a differential
operator $A:\calig C^2\br{\Sp,\R}\to \calig C^0\br{\Sp,\R}$ by
\begin{equation*}
    Af\br x = \frac 12\br{\Delta f\br x - \sum_{i,j}x_i x_j\,\frac{\partial^2f}{\partial x_i\partial x_j}} - \frac{d-1}2 \sum_ix_i\frac{\partial f}{\partial x_i}.
\end{equation*}
Applying Ito's formula again, we see that $f\br{\Phi_t} - \int_0^t
\frac{Af\br{\Phi_t}}{\bmm{B_t}^2} dt$ is a local martingale for each
$f\in \calig C^2\br{\Sp,\R}$.

We may extend  this to functions of two Brownian motions by defining
$\calig A^1,\calig A^2$ by
\begin{align*}
\calig A^1 f\br{x,y} &= A\br{f\br{\cdot,y}}\br x, \\
\calig A^2 f\br{x,y} &= A\br{f\br{x,\cdot}}\br y.
\end{align*}
Then by a similar application of Ito's formula, if $B^1_t$ and
$B^2_t$ are independent Brownian motions and $\Phi^1_t =
\phi\br{B^1_t}$, $\Phi^2_t = \phi\br{B^2_t}$, $\Phi_t = (\Phi^1_t,
\Phi^2_t)$, then
\begin{equation}\label{sam:mgale}
N^f_t = f\br{\Phi^1_t,\Phi^2_t} - \int_0^t \left(\frac{\calig
A^1f\br{\Phi_t}}{\bmm{B^1_t}^2} + \frac{\calig
A^2f\br{\Phi_t}}{\bmm{B^2_t}^2} \right)dt\end{equation}
is a local martingale.

Now apply a time change to $\Phi_t$ as follows. If $\alpha\br t =
\inf\setof{s\in\R^+}{\int_0^s \bmm{B_{\tilde s}}^{-2} d\tilde s \geq
t}$, then $\Theta_t = \Phi_{\alpha\br t}$ is a Markov diffusion on
$\Sp$ with generator $A$. Let $\P[\theta_1, \theta_2]_{\Sp[]}$ and
$\E[\theta_1, \theta_2]_{\Sp[]}$ be the probability measure and
expectation operator associated with two independent copies of
$\Theta_t$ started at $\theta_1$ and $\theta_2\in \Sp$ respectively.

\begin{lemma}\label{sam:harmonic}
Let $U$ be an open subset of $\Sp$ and set
\begin{align*}
T^U_1 &= \inf\setof{t\in\R}{\Theta^1_t\in\partial U}, \\
T^U_2 &= \inf\setof{t\in\R}{\Theta^2_t\in\partial U}, \\
h^U\br{\theta_1,\theta_2} &=
\P[\theta_1,\theta_2]_{\Sp[]}\bs{T^U_1<T^U_2}.
\end{align*}
Then  $h^U\in\calig C^2\br{U^2,\R}$ and $\calig A^1h^U =
-\calig A^2h^U\geq 0$.
\end{lemma}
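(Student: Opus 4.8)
The plan is to first upgrade the purely probabilistic definition of $h^U$ to genuine regularity and identify it as the solution of a PDE, and then to extract the sign of $\calig A^1 h^U$ from a monotonicity (Poisson-type) representation. Throughout, write $\Theta_t=(\Theta^1_t,\Theta^2_t)$ for the pair of independent copies of the diffusion with generator $A$; it is a diffusion on $\Sp\times\Sp$ with generator $\mathcal L:=\calig A^1+\calig A^2$, which is a nondegenerate second-order elliptic operator with smooth coefficients (from the SDE for $\Phi_t$, $A$ is one half the Laplace--Beltrami operator on $\Sp$; in particular $A\I=0$, i.e.\ $\Theta$ is conservative). \emph{Step 1: $h^U\in\calig C^\infty(U^2)$ and $\mathcal L h^U=0$ on $U^2$.} Fix a closed ball $\overline B\subset U\times U$ and $z\in B$, and let $\sigma_B$ be the exit time of $\Theta$ from $B$. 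As $\overline B\subset U\times U$, at time $\sigma_B$ both coordinates still lie in the open set $U$, so $\sigma_B<T^U_1\wedge T^U_2$; applying the strong Markov property of $\Theta$ at $\sigma_B$ and noting that on $\{\sigma_B<T^U_1\wedge T^U_2\}$ the event $\{T^U_1<T^U_2\}$ is determined by the post-$\sigma_B$ evolution from $\Theta_{\sigma_B}\in U\times U$, one gets the mean value identity $h^U(z)=\E[z]\bigl[h^U(\Theta_{\sigma_B})\bigr]$. A bounded function having this property for all balls $\overline B\subset U^2$, relative to a nondegenerate diffusion with smooth coefficients, is automatically $\calig C^\infty$ and $\mathcal L$-harmonic on $U^2$ (e.g.\ since the transition density of $\Theta$ killed on $\partial B$ is smooth, or by Dynkin's characterization of the generator). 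Thus $h^U\in\calig C^\infty(U^2)$ and, since $(\calig A^1+\calig A^2)h^U=0$, also $\calig A^1 h^U=-\calig A^2 h^U$ on $U^2$.

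\emph{Step 2: $\calig A^1 h^U\ge 0$.} Fix $\theta_2\in U$ and set $g:=h^U(\cdot,\theta_2)$, which lies in $\calig C^\infty(U)$ by Step 1, so $\calig A^1 h^U(\theta_1,\theta_2)=(Ag)(\theta_1)$. Because $\Theta^1$ and $\Theta^2$ are independent and $T^U_2$ is $\Theta^2$-measurable, conditioning on the path of $\Theta^2$ gives
\begin{equation*}
  g(\theta_1)=\int_0^\infty \P[\theta_1]\!\left(T^U_1\le s\right)\,\rho(ds),\qquad \rho:=\text{law of }T^U_2\text{ for }\Theta^2\text{ started at }\theta_2,
\end{equation*}
using that exit times of $\Theta$ from open sets are nonatomic (so ``$<$'' and ``$\le$'' agree here) and that $\rho$ is a probability measure ($T^U_2<\infty$ a.s.\ by recurrence of $\Theta$ on the compact $\Sp$). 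Let $P^U_s$ denote the sub-Markov semigroup of $\Theta$ killed on exiting $U$ and $u_s(\theta):=\P[\theta](T^U_1\le s)$, so that $1-u_s=P^U_s\I$ with $\I$ the constant function $1$; for $s>0$, $u_s\in\calig C^\infty(U)$ by the smoothing property of $P^U_s$. The backward equation $\partial_s(P^U_s\I)=A(P^U_s\I)$ on $U$ together with $A\I=0$ give $\partial_s u_s=Au_s$ on $U$, while $s\mapsto u_s(\theta)$ is a distribution function and hence nondecreasing; therefore $Au_s=\partial_s u_s\ge 0$ on $U$ for every $s>0$.

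Finally, differentiate under the integral sign: for compact $K\subset U$, interior parabolic estimates bound $\|u_s\|_{\calig C^2(K)}$ uniformly in $s>0$ (in terms of $\sup|u_s|\le 1$), so $\int_0^\infty\|u_s\|_{\calig C^2(K)}\,\rho(ds)<\infty$ and $Ag=\int_0^\infty Au_s\,\rho(ds)\ge 0$ on $U$. Hence $\calig A^1 h^U\ge 0$, and with Step 1 this yields $\calig A^1 h^U=-\calig A^2 h^U\ge 0$, as required. The main obstacle is Step 1: turning the probabilistic definition of $h^U$ into honest $\calig C^2$ regularity on $U^2$ so that $\calig A^1 h^U$ and $\calig A^2 h^U$ make pointwise sense---for which the right tool is interior elliptic regularity for the smooth nondegenerate operator $\mathcal L$. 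The remaining ingredients (nonatomicity of exit times, differentiating the Poisson-type representation of $g$) are routine given interior parabolic bounds for $A$.
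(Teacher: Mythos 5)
Your proof is correct in substance but takes a genuinely different route from the paper's for the key inequality. The two arguments agree on the identity $\mathcal{A}^1h^U=-\mathcal{A}^2h^U$ (harmonicity of $h^U$ for the product diffusion with generator $\mathcal{A}^1+\mathcal{A}^2$, via Dynkin's formula), and your Step 1 actually supplies the interior elliptic-regularity argument for $h^U\in\mathcal{C}^2(U^2)$ that the paper leaves implicit. For $\mathcal{A}^1h^U\geq 0$, however, the paper works directly with the definition of the generator: it interprets $\mathbb{E}\bigl[\mathbb{P}^{\Theta^1_t,\theta_2}(T^U_1<T^U_2)\bigr]$ as giving $\Theta^1$ a head start of length $t$, bounds it below by $\mathbb{P}(T^U_1<T^U_2)-\mathbb{P}(T^U_1<t)$, divides by $t$ and uses $t^{-1}\mathbb{P}(T^U_1<t)\to 0$; no semigroup or PDE machinery is needed beyond the (assumed) smoothness of $h^U$. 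You instead disintegrate over the law $\rho$ of $T^U_2$ (using independence), reduce to $Au_s\geq 0$ for $u_s=\mathbb{P}^{\cdot}(T^U_1\leq s)$, and obtain this from the backward equation for the killed semigroup together with monotonicity in $s$ --- a semigroup-level formalization of the same ``a head start can only help'' intuition. Your route buys genuine regularity statements and the explicit identity $Au_s=\partial_s u_s$, at the cost of heavier machinery (Dirichlet heat kernel, differentiation under the integral); the paper's route is softer and shorter. Two points in your write-up need repair, though both are fixable: the assertion that interior parabolic estimates bound $\|u_s\|_{\mathcal{C}^2(K)}$ uniformly in $s>0$ in terms of $\sup|u_s|\leq 1$ is not literally true as $s\downarrow 0$ (the constants degenerate with the time-distance to the initial slice), so you should treat small $s$ separately, e.g.\ using the rapid decay of $u_s$ and its derivatives on compact subsets of $U$ as $s\to 0$; and $\rho$ need not be a probability measure when $\partial U$ is empty or polar, but in that case $h^U\equiv 0$ and the claim is trivial, while in general one should carry the mass of $\{T^U_2=\infty\}$ as a harmless extra term (its contribution is $A$-harmonic). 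The nonatomicity of the hitting-time laws that you invoke for the ``$<$ versus $\leq$'' step does hold (the interior smoothness of $s\mapsto P^U_s\mathbf{1}$ already gives it), but it deserves a sentence rather than a parenthesis.
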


\begin{proof}
The process $\br{\Theta^1_t, \Theta^2_t}$ is a Markov diffusion with
generator $\calig A^1 + \calig A^2$, so by Dynkin's formula $\calig
A^1h^U + \calig A^2h^U = 0$ and it remains to show that $\calig
A^1h^U \geq 0$.

By definition of the Markov generator
\begin{align*}
\calig A^1h^U\br{\theta_1,\theta_2} &= \lim_{t\to 0}\frac{1}{t}\left(\E[\theta_1,\theta_2]_{\Sp[]} \br{h^U\br{\Theta^1_t,\theta_2}} - h^U\br{\theta_1,\theta_2}\right) \\
 &= \lim_{t\to 0}\frac{1}{t}\left(\E[\theta_1,\theta_2]_{\Sp[]}\br{\P[\Theta^1_t,\theta_2]_{\Sp[]}\bs{T_1^U<T_2^U}}  - \P[\theta_1,\theta_2]_{\Sp[]}\bs{T_1^U<T_2^U}\right).
\end{align*}
But
$\E[\theta_1,\theta_2]_{\Sp[]}\br{\P[\Theta^1_t,\theta_2]_{\Sp[]}\br\cdot}$
is the probability measure associated with the process
$\br{\Theta^1_{s+t}, \Theta^2_s}$, $s>0$, obtained by giving
$\Theta_1$ a headstart. So we have
\begin{align*}
\E[\theta_1,\theta_2]_{\Sp[]}\br{\P[\Theta^1_t,\theta_2]_{\Sp[]}\bs{T^U_1<T^U_2}}
&\geq \P[\theta_1,\theta_2]_{\Sp[]}\bs{T^U_1-t <T^U_2}
- \P[\theta_1,\theta_2]_{\Sp[]}\bs{T^U_1<t}\\
&\geq  \P[\theta_1,\theta_2]_{\Sp[]}\bs{T^U_1<T^U_2} -
\P[\theta_1,\theta_2]_{\Sp[]}\bs{T^U_1<t}
\end{align*}
and, since $\frac{1}{t}\P[\theta_1,\theta_2]_{\Sp[]}\br{T^U_1<t} \to
0$ as $t\to 0$, we may pass to the limit, and we see that $\calig
A^1h^U\br{\theta^1,\theta^2} \geq 0$.
\end{proof}

We are ready to prove Lemma \ref{sam:keylemma}.

\begin{proof}[Proof of Lemma \ref{sam:keylemma}]
Set $C = \setof{\lambda u}{\lambda\in\R^+,\, u\in U}$ for some open
subset $U\subset\Sp$ and let $\Y_t$ be a Fleming-Viot process in $C$.

We deal first  with the special case when $d=1$, in which case either
$ C=\R$ and there is nothing to prove or $C=\R^+$.  If $C=\R^+$ then
$\Y_t$ is a 2-dimensional Brownian motion in the quarter plane with
jumps $\br{y,0}\mapsto\br{y,y}$ or $\br{0,y}\mapsto\br{y,y}$ whenever
the process exits the first quadrant. As these jumps only increase
$\bmm{\Y_t}$ then $\bmm{\Y_t}$ dominates a $\bes\br 2$ process and
$\Y_t$ does not converge to 0.

For $d\geq 2$ define a function
\begin{equation*}
\mu\br x =
\begin{cases} \log{\bmm x}, &\text{if $d=2$,}\\
\frac{\bmm x^{2-d}}{2-d} &\text{if $d\geq 3$,}
\end{cases}
\end{equation*}
and define processes
\begin{align*}
\Phi^1_t &= \phi\br{Y^1_t}, & M^1_t &= \mu\br{Y^1_t},\\
\Phi^2_t &= \phi\br{Y^2_t},  & M^2_t &= \mu\br{Y^2_t},\\
H_t &= h^U\br{\Phi^1_t,\Phi^2_t}, & S_t &= M^1_t + \br{M^2_t -
M^1_t}H_t.
\end{align*}

Now, $\mu$ is harmonic on $\R^d$ and it will be key to our argument
that $M^1_t$ and $M^2_t$ are both local martingales except when
$\Y_t$ jumps.  We say a $\Y_t$-adatapted process $R_t$ is a
\emph{martingale between jumps} if $R_t -
\sum_{\setof{i\in\N}{\tau_i\leq t}}\br{R_{\tau_i} - R_{\tau_i^-}}$ is
a continuous local martingale. The process $S_t$ is a~convex
combination of $M^1_t$ and $M^2_t$, so if both $Y^1_t$ and $Y^2_t$
converge to the origin, then $S_t$ converges to $-\infty$. Notice
also that if $Y^1$ approaches $\partial C$ then $H_t\to 1$ and so
$S_t\to M^2_t$. Similarly, if $Y^2_t$ approaches the boundary then
$S_t\to M^1_t$. So $S_t$ is continuous.

Set
\begin{equation*}
    N_s = H_s - \int_0^s
    \left(\frac{\calig A^1h^U\br{\Phi_t}}{\bmm{B^1_t}^2}
    + \frac{\calig A^2h^U\br{\Phi_t}}{\bmm{B^2_t}^2}\right)dt.
\end{equation*}
By (\ref{sam:mgale}) $N_t$ is a martingale between jumps. We
may check that the cross variation terms $\ba{M^1,\Phi^1}_t =
\ba{M^2_t,\Phi^2_t} = 0$ and so, as $H_t$ is a $\calig C^2$
function of $\Phi^1_t$ and $\Phi^2_t$, we have $\ba{M^1,H}_t =
\ba{M^2_t,H}_t = 0$ and for $s\in \halfint{\tau_i, \tau_{i+1}}$
we may calculate
\begin{align*}
S_s &= S_{\tau_i} + \int_{\tau_i}^s {\br{1-H_t}}\,dM^1_t
+ \int_{\tau_i}^s {H_t}\,dM^2_t
+ \int_{\tau_i}^s \br{M^2_t - M^1_t} \,dH_t \\
&= S_{\tau_i} + \int_{\tau_i}^s {\br{1-H_t}}\,dM^1_t +
\int_{\tau_i}^s {H_t}\,dM^2_t
+ \int_{\tau_i}^s \br{M^2_t - M^1_t}
\,dN_t \\&\hphantom{= \mu\br{\xi_i}} +
\int_{\tau_i}^s \br{M^2_t-M^1_t}\br{\frac{\calig
A^1h^U\br{\Phi_t}}{\bmm{B^1_t}^2} + \frac{\calig
A^2h^U\br{\Phi_t}}{\bmm{B^2_t}^2}}dt.
\end{align*}
Therefore $S_s - \int_{\tau_i}^s
\br{M^2_t-M^1_t}\br{\frac{\calig
A^1h^U\br{\Phi_t}}{\bmm{B^1_t}^2} + \frac{\calig
A^2h^U\br{\Phi_t}}{\bmm{B^2_t}^2}}dt$ is a martingale between
jumps.

Now from Lemma \ref{sam:harmonic} we have $\calig A^1h^U = -\calig
A^2h^U \geq 0$ and so
\begin{equation*}
  \frac{\calig A^1h^U\br{\Phi_t}}
  {\bmm{B^1_t}^2} + \frac{\calig A^2h^U\br{\Phi_t}}
  {\bmm{B^2_t}^2} = \calig A^1h^U\br{\Phi_t}
  \br{{\bmm{B^1_t}^{-2}} - {\bmm{B^2_t}^{-2}} }.
\end{equation*}
But $\mu$ is an increasing function of the norm $\|\cdot\|$, so for
$\tau_i \leq s_1 \leq s_2 < \tau_{i+1}$,
\[\int_{s_1}^{s_2}
 \br{M^2_t-M^1_t}\br{\frac{\calig
A^1h^U\br{\Phi_t}}{\bmm{B^1_t}^2} + \frac{\calig
A^2h^U\br{\Phi_t}}{\bmm{B^2_t}^2}}dt \geq 0.\]
 Therefore $S_t$ is a continuous local submartingale and it cannot
converge to $-\infty$. Thus $\Y_t$ does not converge to $\br{0,0}$.
\end{proof}

\begin{corollary}\label{sam:cornoconverge}
If $\X_t$ is a Fleming-Viot process in a polyhedral domain $D$
then with probability one the sequence of jump points $\xi_i$ does
not converge to any $\xi_\infty\in \partial D$ as $i\to\infty$.
\end{corollary}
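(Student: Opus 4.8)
The plan is to localize $\X_t$ near a prospective limit point of the jump points, recognize the localized process as a Fleming--Viot process in a wedge, and derive a contradiction from the cone lemma, Lemma~\ref{sam:keylemma}. Throughout, as in the strategy for Theorem~\ref{sam:thm:main} and in Corollary~\ref{sam:corrlobus}, I would argue on the event $\{\tau_\infty<\infty\}$, on which Lemma~\ref{sam:besselness} gives $V^1_t,V^2_t\to 0$ and hence $X^1_t,X^2_t$ both converge to a common limit in $\overline D$ as $t\to\tau_\infty$. Since $\partial D=\bm{\partial\calig K}$ is the \emph{finite} disjoint union $\bigcup_{\sigma\in\partial\calig K}\interior\sigma$, it suffices to fix $\sigma\in\partial\calig K$ and rule out, with probability one, the event $E_\sigma$ that $\xi_i\to\xi_\infty$ for some $\xi_\infty\in\interior\sigma$.

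Because the index past which $\xi_i\in\hood\sigma$, and the time past which $\X_t$ stays in $\hood\sigma$, are not stopping times, I would carry out the localization through the jump times $\tau_n$. Fix $n\in\N$. By the strong Markov property at $\tau_n$ and the identification recorded just before Lemma~\ref{sam:wedgelemma}, the conditional law of $\br{\X_{\tau_n+t}}_{t\geq 0}$ run until its first exit from $\hood\sigma$ coincides with that of a Fleming--Viot process in the wedge $\Wedge\br\sigma$ started from $\br{\xi_n,\xi_n}$ and run until its first exit from $\hood\sigma$; couple the two to agree up to that exit time and call the wedge process $\tilde\X$. Let $B_n$ be the event that $X^1_t,X^2_t\in\hood\sigma$ for all $t\geq\tau_n$ and that $E_\sigma$ occurs. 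On $B_n$ the original process never leaves $\hood\sigma$ after $\tau_n$, so $\tilde\X_t=\X_{\tau_n+t}$ for all $t$ up to $\tau_\infty$, and $\tilde\X_t\to\br{\xi_\infty,\xi_\infty}$ with $\xi_\infty\in\interior\sigma$. Moreover, since both particles converge to their common limit, they are eventually confined to $\hood\sigma$, so $E_\sigma\subset\bigcup_{n\geq 1}B_n$.

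It then remains to check $\P\br{B_n}=0$ for every $n$. By Lemma~\ref{sam:wedgelemma}, the $\Cone\sigma$-component $\tilde\Y$ of $\tilde\X$ is a Fleming--Viot process in the cone $\Cone\sigma\subset\Span\sigma^\bot$. The orthogonal projection onto $\Span\sigma^\bot$ sends the affine hull of $\sigma$, and in particular $\xi_\infty$, to the apex of $\Cone\sigma$; hence on $B_n$ the process $\tilde\Y_t$ converges to $(\text{apex},\text{apex})$, i.e.\ to $\br{\underline 0,\underline 0}$ after translating the apex to the origin. Applying Lemma~\ref{sam:keylemma} inside the space $\Span\sigma^\bot$, and conditioning on $\F_{\tau_n}$, this has conditional probability zero whatever the starting point, so $\P\br{B_n}=0$. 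Taking the union over $n$, and then over the finitely many $\sigma\in\partial\calig K$, proves $\P\br{\bigcup_\sigma E_\sigma}=0$, which is the assertion.

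I expect the main obstacle to be this localization device: the time after which $\X_t$ remains in $\hood\sigma$ is not a stopping time, so one must work through the fixed stopping times $\tau_n$ and couple, after $\tau_n$ and up to the exit from $\hood\sigma$, with a genuine Fleming--Viot process in $\Wedge\br\sigma$, so that Lemmas~\ref{sam:wedgelemma} and~\ref{sam:keylemma} can be brought to bear on that process rather than on $\X_t$ itself. The remaining ingredients---the finite decomposition $\partial D=\bigsqcup_\sigma\interior\sigma$, the elementary fact that $\interior\sigma$ projects onto the apex of $\Cone\sigma$, and the passage from ``particles converge'' to ``particles are eventually in $\hood\sigma$'' via Lemma~\ref{sam:besselness}---are routine.
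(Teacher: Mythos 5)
Your proof is correct and follows essentially the same route as the paper: fix $\sigma\in\partial\calig K$, reduce via the openness of $\hood\sigma$ and Lemma~\ref{sam:besselness} to events $B_n$ indexed by the stopping times $\tau_n$ (the paper's $F^\sigma_i$), identify the post-$\tau_n$ process with a Fleming--Viot process in $\Wedge\br\sigma$, project to the cone factor via Lemma~\ref{sam:wedgelemma}, and kill each $B_n$ with Lemma~\ref{sam:keylemma}. Your version is a bit more explicit about the strong Markov/coupling step at $\tau_n$ and about integrating out the random starting point $\xi_n$, which the paper leaves tacit, but there is no substantive difference.
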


\begin{proof}
First, for $\sigma\in\partial\calig K$, let $F^\sigma$ be the event
that $\xi_i\to \xi_\infty$ for some $\xi_\infty\in\interior\sigma$
and assume without loss of generality that $\underline 0\in\sigma$.
Set
\begin{equation*}
    F_i^\sigma = F^\sigma\cap \bs{X^j_t\in\hood\sigma;\,
    t\geq \tau_i,\,j=1,2}.
\end{equation*}
Then, as $\hood\sigma$ is open in $\overline D$, from Lemma
\ref{sam:besselness}, $F^\sigma_i$ increases to $F^\sigma$ up
to an event of probability~$0$. By the strong Markov property
and Lemma \ref{sam:keylemma},
\begin{equation*}
    \P\br{F^\sigma_i} = \P[\xi_i]_\sigma\Bigl(\Y_t\to
    \br{\underline 0, \underline 0} \cap \bs{X^j_t\in\hood\sigma;\,
    t\geq \tau_i,\,j=1,2}\Bigr) = 0.
\end{equation*}
So as $\partial\calig K$ is a finite set of simplices we have
$\P\bs{\exists \xi_\infty\in\partial D \text{ s.t. }\xi_i\to
\xi_\infty \text { as } i\to\infty} = 0$.
\end{proof}

To complete the proof of Theorem \ref{sam:thm:main} we  consider
the set
\begin{equation*}
    L = \setof{\sigma\in\calig K}{\text{there exists a subsequence $\xi_{i_n}\to \xi\in\interior\sigma$ as $n\to\infty$}}.
\end{equation*}
It is easy to check that the event $\{\sigma\in L\}$ is
$\X$-measurable. We say $\sigma$ is a \emph{local maximum} of $L$ if
$L\cap\Star\sigma = \bc\sigma$.  Of course any non-empty subset of a
finite lattice contains at least one local maximum, and $L$ is non
empty by compactness of $\overline D$.  We will prove Theorem
\ref{sam:thm:main} by showing that for each $\sigma\in\calig K$ the
event that $\tau_\infty<\infty$ and $\sigma$ is a local maximum of
$L$ has probability $0$.

\begin{proof}[Proof of Theorem \ref{sam:thm:main}]
Fix $\sigma\in\partial \calig K$, and note that
$\hood\sigma\backslash\sigma$ is non empty. We show first that if
$\xi_i$ has a limit point in $\interior\sigma$ and
$\tau_\infty<\infty$, then $\xi_i$ has a second limit point in
$\hood\sigma\backslash\sigma$.

First suppose  that $\sigma = \bc{v}$ is a vertex of $\calig K$ and
$v$ is a limit point of $\xi_i$. By
Corollary~\ref{sam:cornoconverge}, the sequence $\xi_i$ does not
converge to $v$ as $i\to\infty$, so we may choose $\varepsilon>0$
such that $B\br{v,\varepsilon}\cap\overline D\subset \hood\sigma$ and
that $\bmm{\xi_i-v}>\varepsilon$ infinitely often.  If this is the
case then there are infinitely many pairs $(\xi_{i_n},\xi_{i_{n+1}})$
such that $\xi_{i_n}\in B\br{v,\varepsilon}$ and $\xi_{i_{n+1}}
\notin B\br{v,\varepsilon}$. But from Lemma \ref{sam:besselness} we
have $\bmm{\xi_i - \xi_{i+1}}\to 0$ as $i\to\infty$ hence
$\bmm{\xi_{i_n}-v}\to\varepsilon$ as $i\to\infty$. Therefore, as
$\partial B\br{v,\varepsilon}$ is compact, $\xi_i$ must have some
limit point in $\partial B\br{v,\varepsilon}\cap\overline D\subset
\hood\sigma\backslash\bc{v}$.

If $\sigma$ is a $k$-simplex for $0<k<d$ then for each $x\in\interior\sigma$ there exists $\varepsilon>0$ such that $B\br{x,2\varepsilon}\cap\overline D\subset \hood\sigma$. We will consider upcrossings of the interval $\bs{\varepsilon,2\varepsilon}$ by $\bmm{\xi_i-x}$. Define sequences $i_n, j_n\in\N\cup\bc\infty$ and
$T_n,\eta_n\in\R\cup\bc\infty$ by: $j_0 = 0,$
\begin{align*}
i_{n+1} &= \inf\setof{i>j_n}{\xi_{i}\in B\br{x,\varepsilon}}, \\
j_n &=  \inf\setof{j>i_n}{\xi_{j}\notin B\br{x,2\varepsilon}}, \\
T_n &= \inf\setof{t>\tau_{i_n}}{X^1_t\notin B\br{x,2\varepsilon} \text{ or } X^2_t\notin B\br{x,2\varepsilon} }, \\
\eta_n &= \sup_i\setof{\tau_i}{\tau_i<T_n}.
\end{align*}
Then we put $N = \sup\setof{n\in\N}{j_n<\infty}$ to be the
number of upcrossings.

Note that $B\br{x,2\varepsilon}\cap\overline D\subset \hood\sigma$
and so $\X_{\br{t +\tau_{i_n}}\wedge T_n}$ is a Fleming-Viot process
in $\Wedge\br\sigma$ started at $\br{\xi_{i_n},\xi_{i_n}}$ and
stopped on exiting $B\br{x,2\varepsilon}$. So we may consider
$\P[\xi_{{i_n}}]_\sigma$ and factorize $\X_t= \Y_t+\Z_t$ as in Lemma
\ref{sam:wedgelemma}. For $t\in\bs{\tau_{i_n},\eta_n}$, the process
$\tilde Z_t$ is measurable with respect to $
\X\bigm|_{\bs{\tau_{i_n},T_n}}$ which is distributed according to
$\P[\xi_{i_n}]_\sigma$. Hence $\tilde
Z\bigm|_{\bs{\tau_{i_n},\eta_n}}$ is a Brownian motion in
$\Span\sigma$ with respect to its own natural filtration.

Recall $\Z_{\tau_i} = \br{\zeta_i, \zeta_i}$ and set
\begin{equation*}
    \tilde V_t =
    \begin{cases}
      \bigl\|\tilde Z_t-\zeta_{i_n}\bigr\|, &\text{if  $t\in \bs{\tau_{i_n},\eta_n}$,}\\
      0, &\text{otherwise.}
    \end{cases}
\end{equation*}
Then $\tilde V_t$ is dominated by a $\bes\br{d}$ process reset to
zero at times $\tau_{i_n}$. So arguing as in the proof of Lemma
\ref{sam:besselness}, if $\tau_\infty<\infty$ and the number of
upcrossings $N=\infty$, then $\tau_{i_n} < \tau_\infty<\infty$  for
each $n\in\N$, and $\tilde V_t\to 0$ as $\tau_i\to\infty$. But
$\eta_n=\sup_i\setof{\tau_i}{\tau_i<T_n}$, hence $\X_{\eta_n} =
\br{\xi_{k_n},\xi_{k_n}}$ for some $k_n\in \N$ and either
$\bmm{x-X^1_{T_n}}=2\varepsilon$ or $\bmm{x-X^2_{T_n}}=2\varepsilon$.
So if $\tau_\infty<\infty$ and $N=\infty$, we must have
$\bmm{\xi_{k_n} - x}\to 2\varepsilon$ as $n\to\infty$ and so
$\xi_{k_n}$ has a limit point $\xi_\infty \in \partial
B\br{x,2\varepsilon}\cap\overline D \subset\hood\sigma$. But $\tilde
V_t\to 0$ as $t\to\tau_\infty$ with probability one,  so we cannot
have $\xi_\infty\in \sigma$ and we must have $\xi_\infty\in
\hood\sigma\backslash\sigma$.

Now let $Q^\sigma$ be a countably dense subset  of $\interior\sigma$
and suppose $\xi_i$ has some limit point $x\in\interior\sigma$. By
Corollary \ref{sam:cornoconverge},  $\xi_i$ does not converge to $x$
as $i\to\infty$ and we may choose some rational $\varepsilon>0$ such
that $B\br{x,3\varepsilon}\cap\overline D\subset \hood\sigma$ and
$\bmm{\xi_i-x}>3\varepsilon$ infinitely often. Now choose $q\in
Q^\sigma\cap B\br{x,\varepsilon}$ and notice that $\bmm{\xi_i-q}$
makes infinitely many upcrossings of the interval
$\bs{\varepsilon,2\varepsilon}$. If $\tau_\infty<\infty$ then as
$Q^\sigma$ is countable, with probability one we may find some limit
point $\xi_\infty\in \hood\sigma\backslash\sigma$.

Recall the definition of the set $L$.  As $L$ is nonempty there must
exist some local maximum~$\sigma$. However if $\tau_\infty<\infty$ then,
by Corollary \ref{sam:corrlobus}, we have $L\subseteq\partial \calig
K$. We have just shown that if $\tau_\infty<\infty$ then $L$ has no
local maximum in $\partial \calig K$. Hence we must have $\tau_\infty
= \infty$.
\end{proof}

\renewcommand{\Z}[1]{Z^{#1}}


\bibliographystyle{amsplain}
\bibliography{fv}

\end{document}